\newcommand {\rd}{\color{red}}
\newcommand{ \bk}{\color{black}}
\begin{document}
\setlength{\baselineskip}{14pt}

\parindent 0.5cm

\newtheorem{theorem}{Theorem}[section]
\newtheorem{lemma}{Lemma}[section]
\newtheorem{proposition}{Proposition}[section]
\newtheorem{definition}{Definition}[section]
\newtheorem{example}{Example}[section]
\newtheorem{corollary}{Corollary}[section]

\newtheorem{remark}{Remark}[section]

\numberwithin{equation}{section}

\def\p{\partial}
\def\I{\textit}
\def\R{\mathbb R}
\def\C{\mathbb C}
\def\u{\underline}
\def\l{\lambda}
\def\a{\alpha}
\def\O{\Omega}
\def\e{\epsilon}
\def\ls{\lambda^*}
\def\D{\displaystyle}
\def\wyx{ \frac{w(y,t)}{w(x,t)}}
\def\imp{\Rightarrow}
\def\tE{\tilde E}
\def\tX{\tilde X}
\def\tH{\tilde H}
\def\tu{\tilde u}
\def\d{\mathcal D}
\def\aa{\mathcal A}
\def\DH{\mathcal D(\tH)}
\def\bE{\bar E}
\def\bH{\bar H}
\def\M{\mathcal M}
\renewcommand{\labelenumi}{(\arabic{enumi})}

\def\disp{\displaystyle}
\def\undertex#1{$\underline{\hbox{#1}}$}
\def\card{\mathop{\hbox{card}}}
\def\sgn{\mathop{\hbox{sgn}}}
\def\exp{\mathop{\hbox{exp}}}
\def\OFP{(\Omega,{\cal F},\PP)}
\newcommand\JM{Mierczy\'nski}
\newcommand\RR{\ensuremath{\mathbb{R}}}
\newcommand\CC{\ensuremath{\mathbb{C}}}
\newcommand\QQ{\ensuremath{\mathbb{Q}}}
\newcommand\ZZ{\ensuremath{\mathbb{Z}}}
\newcommand\NN{\ensuremath{\mathbb{N}}}
\newcommand\PP{\ensuremath{\mathbb{P}}}
\newcommand\abs[1]{\ensuremath{\lvert#1\rvert}}

\newcommand\normf[1]{\ensuremath{\lVert#1\rVert_{f}}}
\newcommand\normfRb[1]{\ensuremath{\lVert#1\rVert_{f,R_b}}}
\newcommand\normfRbone[1]{\ensuremath{\lVert#1\rVert_{f, R_{b_1}}}}
\newcommand\normfRbtwo[1]{\ensuremath{\lVert#1\rVert_{f,R_{b_2}}}}
\newcommand\normtwo[1]{\ensuremath{\lVert#1\rVert_{2}}}
\newcommand\norminfty[1]{\ensuremath{\lVert#1\rVert_{\infty}}}

\title{ Asymptotic limits of the principal spectrum point of  a nonlocal dispersal cooperative system and application to a two-stage structured population model} 
\author{Maria A. Onyido\footnote{Department of Mathematical Sciences, Northern Illinois University, DeKalb, IL 60115, USA (monyido@niu.edu). }, 
\quad Rachidi B. Salako\footnote{Department of Mathematical Sciences, University of Nevada  Las Vegas, NV 89154, USA
(rachidi.salako@unlv.edu).},\quad  Markjoe O. Uba\footnote{Department of Mathematical Sciences, Northern Illinois University, DeKalb, IL 60115, USA (markjoeuba@gmail.com).}, \quad and \quad  Cyril I. Udeani\footnote{Department of Applied Mathematics and Statistics, Comenius University in Bratislava, Mlynsk\'a dolina, 84248 Bratislava, Slovakia (cyril.izuchukwu@fmph.uniba.sk).} }
\date{}

\maketitle

\begin{abstract}This work examines the limits of the principal spectrum point, $\lambda_p$, of a nonlocal dispersal cooperative system with respect to the dispersal rates. In particular, we provide precise information on the sign of $\lambda_p$ as one of the dispersal rates is : (i) small while the other dispersal rate is arbitrary, and (ii)  large while the other is either also large or  fixed. We then apply our results to study the effects of dispersal rates on a two-stage structured nonlocal dispersal population model whose linearized system at the trivial solution results in a nonlocal dispersal cooperative system. The asymptotic profiles of the steady-state solutions with respect to the dispersal rates of the two-stage nonlocal dispersal population model are also obtained. Some biological interpretations of our results are discussed.
    
\end{abstract}

\noindent{\bf Key words.}  Nonlocal-dispersal, stage-structured model, principal spectrum point, steady state, asymptotic limit

\smallskip

{\noindent{\bf  AMS subject classifications.} 92D40, 92D50, 35P15, 35K57

\section{Introduction}
\quad Most population models typically investigate species under the assumption that they share similar characteristics among its kind. However, biological species could exhibit non-negligible stage-specific variations. Therefore, a stage-structured model would help in incorporating stage-dependent physiological parameters, thereby yielding better biological predictions of survival or extinction. For example, many  insect  species pass from an egg stage into several instar stages and then into the adult stage. For some studies on stage structured populations, we refer  interested readers to \cite{BWH2019, BM1990} and the references therein.

\bk In this study, we consider the stage-structured population model with nonlocal dispersal, introduced in \cite{OSUU2023}, describing the dynamics of a species with density function ${\bf u}(t,x)=(u_1(t,x),u_2(t,x))$ living in a bounded habitat $\Omega\subset\mathbb{R}^n$ and structured in two stages: adults and juveniles. Here, $u_1(t,x)$ denotes the density function of the juveniles and $u_2(t,x)$ that of the adults who have attained reproductive maturity. The adults have local reproductive rate $r(x)$, and the juveniles attain reproductive maturity at the rate $s(x)$. The juveniles have local death rate $a(x)$ and self-limitation rate $b(x)$ due to their size in space. The adults have a local death rate $e(x)$ and local self-limitation rate $f(x)$ induced by their size. Interaction between the adults and the juveniles may generate an interspecific competition for local resources. We denote by $c(x)$ and $g(x)$ the interspecific local competition rates of the adults and juveniles, respectively.   The following nonlocal system of PDE can be used to study the dynamics of the species
\begin{small}
 \begin{equation}\label{model}
     \begin{cases}
     \partial_tu_1=\mu_1\int_{\Omega}\kappa(x,y)(u_1(t,y)-u_1(t,x))dy +r(x)u_2-s(x)u_1 -(a(x)+b(x)u_1+ c(x)u_2)u_1 & x\in\Omega,\ t>0,\cr
      \partial_tu_2=\mu_2\int_{\Omega}\kappa(x,y)(u_2(t,y)-u_2(t,x))dy +s(x)u_1-(e(x)+f(x)u_2+ g(x)u_1)u_2 & x\in\Omega,\ t>0,
     \end{cases}
 \end{equation}
 \end{small}
\noindent where $\mu_1$ and $\mu_2$ represent the dispersal rates.   The nonlocal dispersal operator in \eqref{model} is said to have Neumann-type boundary condition  (see \cite{CERW} for more details). \bk   Nonlocal dispersal, modeled by an integral operator like that appearing in system \eqref{model},  is employed in modeling the dynamics of species that exhibit long-range movement or position jump  (see \cite{BL, BS, CCR2006, CERW, Co1, OSUU2023, OnSh, SX2015, ShZh1, SV} and the references therein for more discussion on population models with nonlocal dispersal).   
 Throughout this study, we shall suppose that the following standing hypotheses hold for the parameters in system \eqref{model}:   
 
 \medskip

 \noindent {\bf (H1)}   $\kappa\in C(\bar{D}\times\bar{D})$, positive and symmetric, i.e., $\kappa(x,y)=\kappa(y,x)>0$ for every $ x,y\in\bar{D}$.  
 
 \medskip

 \noindent {\bf (H2)} The functions  $a,b, c, e, f, g, r,$ and $s$ are  H\"older continuous on $\overline{\Omega}$ and nonnegative, $r$ and $s$ are non-identically zero, and  $b$ and $f$ are strictly positive.

\medskip

 Observe that ${\bf u}(t,x) = (0,0)$ is a soluton of \eqref{model}, known as the trivial solution. Linearizing \eqref{model} at this trivial solution yields the following linear cooperative nonlocal system:
 
\begin{equation}\label{linera-model}
     \begin{cases}
     \partial_tU_1=\mu_{1}\int_{\Omega}\kappa(x,y)[U_1(t,y)-U_1(t,x)]dy +r(x)U_2-(a(x)+s(x))U_1 & x\in \Omega,\ t>0,\cr 
     \partial_tU_2=\mu_2\int_{\Omega}\kappa(x,y)[U_2(t,y)-U_2(t,x)]dy +s(x)U_1-e(x)U_2 & x\in\Omega,\ t>0.
     \end{cases}
 \end{equation}
 Note that using equations  \eqref{kappa-def}, \eqref{circ-def} and \eqref{A-map} below, \eqref{linera-model} can be rewritten as 

 \begin{equation}\label{M-eq3}
     \frac{d{\bf U}}{dt}=\mu\circ\mathcal{K}({\bf U})+\mathcal{A}{\bf U}\quad t>0.
 \end{equation}  
Denoting by  $\lambda_p(\mu\circ\mathcal{K}+\mathcal{A})$  the principal spectrum point of the linear operator $\mu\circ\mathcal{K}+\mathcal{A}$ (see Definition \eqref{def-0}). It was established in
\cite{OSUU2023}  that the persistence and extinction of the species modeled by \eqref{model}  depend  on the sign of  $\lambda_p(\mu\circ\mathcal{K}+\mathcal{A})$.   Hence, to understand the effect of the dispersal rates on the species' persistence, it is pertinent to investigate the asymptotic dynamics of the principal spectrum point with respect to the dispersal rates. Our first goal in this study is to examine some characterizations of the principal spectrum point and determine its behavior as the dispersal rates approach some critical values. 

In general, cooperative nonlocal dispersal models appear in different scenarios  (see\cite{H, H2} and the references therein),  like the general two-species  Lotka-Volterra competition system  (see \cite{BS} and the references therein). We indicate that two-species competition systems can be transformed into cooperative ones using standard arguments. Hence, our results in the current work on $\lambda_p$ apply to that setting. For instance, \cite{BS} established some criteria for the existence of principal eigenvalues of time-periodic cooperative linear systems with nonlocal dispersal. Since the principal eigenvalues (when it exists) coincide with the principal spectrum point, our results in this study also apply to their study when the coefficients are time-homogeneous.

 For unstructured species modeled with nonlocal dispersal under  a temporally homogeneous but spatially heterogeneous environment with Neumann/Dirichlet-type boundary conditions, Shen and Xie \cite{SX2015} examined the effects of the dispersal rates on the asymptotic behavior of the principal spectrum point of its linearization at the trivial solution. They obtained the limits of the principal spectrum point as the dispersal rates go to zero or infinity. Hence, our results on the limits of  $\lambda_p(\mu\circ\mathcal{K}+\mathcal{A})$ (see Theorem \ref{Th2-1}) extend  their results to two species cooperative case. Additionally, Shen and Vo \cite{SV} studied the unstructured model with Dirichlet-type boundary conditions and extended some of the  results  of Shen and Xie to a time-periodic environment. 

It is pertinent to mention at this point that \eqref{model} is the nonlocal analog  of the following two-stage structured population system with random (local) dispersal and Neumann boundary conditions \bk
\begin{equation}\label{local}
     \begin{cases}
      \partial_tu_1=\mu_1\Delta u_1 +r(x)u_2-s(x)u_1 -(a(x)+b(x)u_1+c(x)u_2)u_1 & x\in\Omega,\ t>0,\cr
      \partial_tu_2=\mu_2\Delta u_2 +s(x)u_1-(e(x)+f(x)u_2+g(x)u_1)u_2 & x\in\Omega,\ t>0,\cr
      \nabla u_1 \cdot \nu =  \nabla u_2 \cdot \nu = 0 \; & x \in \partial \Omega, \; t > 0,
     \end{cases}
 \end{equation}
where $\nu$ is the outward unit normal to $\partial \Omega$  and the parameters in \eqref{local} have the same meaning as those of \eqref{model}. Cantrell, Cosner and Salako \cite{CCS} studied the effects of dispersal rates on the species modeled by \eqref{local} and consequently obtained some results on the asymptotic behavior of solutions as the dispersal rates go to zero or infinity. Our results extend theirs to the nonlocal case. It is noteworthy to indicate that the semiflow generated by the classical solution of system \eqref{model} is not compact. This noncompactness introduces some difficulties in  the analysis of the nonlocal system and most of the techniques developed for \eqref{local} do not generally extend to \eqref{model}.

 When species persist and eventually stabilize, from an  ecological viewpoint, it is important to examine their spatial distributions for adequate distribution of scarce resources. In this regard, the second goal of this study is to apply the results obtained in the first part to establish the asymptotic profiles of steady states of system \eqref{model}. In particular, our results in Theorems \rd \ref{Th2-5} \bk provide complete information on the spatial distribution of steady states of \eqref{model} when at least one of the dispersal rates go to zero or infinity. It is appropriate to note here that \cite{OSUU2023} gave some criteria for the existence, uniqueness and stability of steady states of \eqref{model}. Hence, our results in this regard complement those results.

  The rest of the paper is organized as follows:  Section \ref{Sec2} contains some notations, definitions and  our main results. We collect a few preliminary results  in  section \ref{Sec3}. The proofs of our main results are presented in sections \ref{Sec4}--\ref{Sec8}.

\bk
\section{Notations, Definitions and Main Results} \label{Sec2}
\subsection{Notations and Definitions}

 \quad Let $X:=C(\overline{\Omega})$ denote the Banach space of uniformly continuous and bounded functions on $\overline{\Omega}$ endowed with the sup-norm, $\|u\|_{\infty}=\underset{x\in \Omega}{\sup}|u(x)|$ for every $u\in X$. Since the density functions are nonnegative, we will be interested in the following subsets of $X$: 
$$
X^+=\{u\in X\,|\, u(x)\ge 0,\quad x\in \bar \Omega\},
$$
and
$$
X^{++}=\{u\in X^+\,|\, \inf_{x\in\bar \Omega} u(x)>0\}.
$$
We shall use bold-face letters to represent vectors. In particular,  we use ${\bf u}=(u_1,u_2)$  to denote vectors in $\mathbb{R}^2$. For covenience, we also use  ${\bf u}=(u_1,u_2)$ for elements of $X\times X$. Hence, any vector in $\mathbb{R}^2$ can be understood as a constant vector valued function on $\overline{\Omega}$; this would not cause any confusion in the paper. We endow  $X\times X$ with the norm 
$$ 
\|{\bf u}\|:=\max\{\|u_1\|_{\infty},\|u_2\|_{\infty}\}\quad \forall\ {\bf u}\in X\times X.
$$
 Hence, $X\times X$ is also a Banach space.  Given a function $h\in X$, define
$$ 
\hat{h}:=\frac{1}{|\Omega|}\int_{\Omega}h(x)dx,\quad h_{\min}:=\min_{x\in\overline{\Omega}}h(x), \quad \text{and}\quad  h_{\max}=\max_{x\in\overline{\Omega}}h(x).
$$
 Given a complex number $\lambda\in\mathbb{C}$, we denote by $\mathcal{R}e(\lambda)$ its real part.

\begin{definition}[Principal Spectrum Point]\label{def-0} Let $E$ be a Banach space and $B : dom(B)\to E$ be a linear map where $dom(B)$ is a linear subspace of $E$. Let $\sigma(B)$ denote the spectrum of 
 the linear map $B$. The principal spectrum point of $B$, denoted  $\lambda_p(B)$, is defined by 
  $$ 
  \lambda_p(B):=
  \begin{cases}
  \sup\{\mathcal{ R}e(\lambda) : \lambda\in\sigma(B) \} & \text{if}\ \sigma(B)\ne \emptyset,\cr 
  -\infty & \text{if}\ \sigma(B)= \emptyset.
  \end{cases}
  $$ 
\end{definition}

\medskip

\noindent For our purpose in the current work, the Banach space in Definition \ref{def-0} will be either $E=X$  or $E=X\times X$, while the linear operator $B$ will always be a bounded linear map on $E$. Hence,  $\sigma(B)$ will always be nonempty and bounded; so that the principal spectrum point $\lambda_p(B)$ is a real number. 

\medskip 

 Given $h\in X$, consider the bounded linear operators $\mathcal{K},\ \mathcal{I},\ h\mathcal{I}\ :\ X\to X$ defined by 
\begin{equation}\label{kappa-def}
    \mathcal{K}u(x)=\int_{\Omega}\kappa(x,y)(u(y)-u(x))dy \quad \forall\ u\in\ X,\ x\in\overline{\Omega},
\end{equation}
\begin{equation}\label{Ident-def}
    \mathcal{I}u(x)=u(x)\quad \forall\ u\in X,\ x\in\overline{\Omega},
\end{equation}
\begin{equation}\label{multi-func}
    (h\mathcal{I})u(x)=h(x)u(x)\qquad \forall\ u\in X,\ x\in\overline{\Omega}.
\end{equation}

\medskip

\noindent Next, given a bounded linear operator $\mathcal{B}$ on $X$ and a real vector ${\bf z}=(z_1,z_2)\in\mathbb{R}^2$, we denote by ${\bf z}\circ \mathcal{B}$ the bounded linear operator on $X\times X$ given by 
\begin{equation}\label{circ-def}
    {\bf z}\circ \mathcal{B}{\bf u}=\left(\begin{array}{c}
         z_1\mathcal{B}u_1  \\
         z_2\mathcal{B} u_2  
    \end{array} \right) \quad \forall\ {\bf u}\in X\times X.
\end{equation}

\noindent Define the operator $ \mathcal{A} \ :\ X\times X\to X\times X$ by     
\begin{equation}\label{A-map}    \mathcal{A}({\bf u})=A(x){\bf u}=\left( \begin{array}{c}       r(x)u_2-(a(x)+s(x))u_1      \\    su_1(x)-e(x)u_2(x)           \end{array}\right)\quad \forall\ {\bf u}\in X\times X.\end{equation} where \begin{equation}\label{matrix-A}
     A(x)=\left(\begin{array}{cc}
         -(a(x)+s(x)) & r(x)  \\
         s(x) & -e(x)
     \end{array}\right) \quad  \forall x\in\overline{\Omega}.
 \end{equation}

 For convenience, we introduce the matrix
\begin{equation}\label{A-hat}
\hat{A}=\left(\begin{array}{cc}
         -(\hat{a}+\hat{s}) &  \hat{r}  \\
         \hat{s} & -\hat{e}
     \end{array}\right),
     \end{equation}
and let $\tilde{\Lambda}$ denote its maximal eigenvalue.

\subsection{Main Results }

\quad We state our main results in this  section. For every positive vector ${\bf \mu}=(\mu_1,\,\mu_2)\in \mathbb{R}^+\times\mathbb{R}^+$,  let $\sigma(\mu\circ\mathcal{K}+\mathcal{A})$ denote the spectrum of the bounded linear operator $\mu\circ\mathcal{K}+\mathcal{A}$ and $\lambda_p(\mu\circ\mathcal{K}+\mathcal{A})$ its principal spectrum point. If $\lambda_p(\mathcal{\mu\circ\mathcal{K}+\mathcal{A}})$ is an isolated eigenvalue with a strictly positive eigenfunction, then $\lambda_p(\mathcal{\mu\circ\mathcal{K}+\mathcal{A}})$ is called the principal eigenvalue of $\mathcal{\mu\circ\mathcal{K}+\mathcal{A}}$. However, in general, it is well-known that  $\lambda_p(\mathcal{\mu\circ\mathcal{K}+\mathcal{A}})$ may  not be  the principal eigenvalue (see \cite{Co1, ShZh1}  and the references therein). Thus, it is essential to find some other useful ways to characterize $\lambda_p(\mathcal{\mu\circ\mathcal{K}+\mathcal{A}})$ by comparing it to some other relevant spectral quantities. This is accomplished in the next subsection.

\subsubsection{Characterization of $\lambda_p(\mu\circ\mathcal{K}+\mathcal{A})$ in terms of generalized principal eigenvalues}

\quad Our first result in this subsection provides some useful characterizations of the principal spectrum point $\lambda_p (\mathcal{\mu \circ \mathcal{K}+\mathcal{A}})$ for two-species nonlocal equations.  Consider the following ordering in $X \times X$
\begin{equation}\label{ord1}
    {\bf u} \le_1  {\bf v}\;  \text{if}\; u_1 \le v_1 \ \text{and}\ u_2 \le  v_2.
\end{equation} 
\noindent Note that for the positive constant vector ${\bf 1}:=(1,1)$, we have  $(\mu\circ\mathcal{K}+\mathcal{A})({\bf 1})=A(x){\bf 1}$ and 
 $$
 -\max\{\|a+s\|_{\infty},\|e\|_{\infty}\}{\bf 1}  \le_1  A(x){\bf 1} \le_1 \bk \max\{\|s\|_{\infty},\|r\|_{\infty}\}{\bf 1}\quad \forall\ x\in\overline{\Omega}.
 $$ \bk
 Therefore, the quantities 
 \begin{equation}\label{HK1}
     \lambda_*(\mu\circ\mathcal{K}+\mathcal{A}):=\sup\{\lambda\in\mathbb{R}\ :\ \exists \varphi\in X^{++}\times X^{++} \ \text{satisfying}\ \lambda\varphi \le_1 (\mu\circ\mathcal{K}+\mathcal{A})(\varphi) \}
 \end{equation}
 and 
 \begin{equation}\label{HK2}
    \lambda^*(\mu\circ\mathcal{K}+\mathcal{A}):=\inf\{\lambda\in\mathbb{R}\ :\ \exists \varphi\in X^{++}\times X^{++} \ \text{satisfying}\ (\mu\circ\mathcal{K}+\mathcal{A})(\varphi)\le_1 \lambda\varphi\}
 \end{equation}
 are well-defined real numbers and satisfy 
 \begin{equation}\label{HK3}
     -\max\{\|a+s\|_{\infty},\|e\|_{\infty}\}\le  \lambda_*(\mu\circ\mathcal{K}+\mathcal{A})\le  \lambda^*(\mu\circ\mathcal{K}+\mathcal{A})  \le \max\{\|r\|_{\infty},\|s\|_{\infty}\}. 
 \end{equation}
 
\noindent The fact that $\lambda_*(\mu\circ\mathcal{K}+\mathcal{A}) \le \lambda^*(\mu\circ\mathcal{K}+\mathcal{A})$ is a consequence of the positivity of the uniformly continuous semigroup $\{{\bf U}^{\mu}(t)\}_{t\ge 0}$ generated by the bounded linear operator $\mu\circ\mathcal{K}+\mathcal{A}$. The quantities $\lambda_*(\mu\circ\mathcal{K}+\mathcal{A})$
 and $\lambda^*(\mu\circ\mathcal{K}+\mathcal{A})$ are the {\it generalized principal eigenvalues} of the linear operator $\mu\circ\mathcal{K}+\mathcal{A}$.  The following result holds.

 \begin{theorem}\label{TH4} Let  $\lambda_*(\mu\circ\mathcal{K}+\mathcal{A})$ and   $\lambda^*(\mu\circ\mathcal{K}+\mathcal{A})$ be defined by \ \eqref{HK1} and \eqref{HK2}, respectively. Then 
 $$ 
 \lambda_*(\mu\circ\mathcal{K}+\mathcal{A})=  \lambda^*(\mu\circ\mathcal{K}+\mathcal{A})=\lambda_p(\mu\circ\mathcal{K}+\mathcal{A}).
 $$
 \end{theorem}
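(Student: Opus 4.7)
The plan is to establish the three equalities by a sandwich argument:
\[
\lambda_p(\mathcal{L}) \,\le\, \lambda_*(\mathcal{L}) \,\le\, \lambda^*(\mathcal{L}) \,\le\, \lambda_p(\mathcal{L}),
\]
where I abbreviate $\mathcal{L}:=\mu\circ\mathcal{K}+\mathcal{A}$. The middle inequality is already noted in the excerpt; both outer inequalities will use the positivity of the uniformly continuous semigroup $\{{\bf U}^\mu(t)\}_{t\ge 0}$ generated by $\mathcal{L}$, which stems from the cooperative structure of $A(x)$ (nonnegative off-diagonals $r,s$) together with positivity of $\kappa$. Because $\mathcal{L}$ is bounded, the spectral mapping theorem applies to the exponential, so the growth bound of $\{{\bf U}^\mu(t)\}_{t\ge 0}$ equals $\lambda_p(\mathcal{L})$ and $r({\bf U}^\mu(t))=e^{t\lambda_p(\mathcal{L})}$.

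For the inequality $\lambda^*(\mathcal{L})\le \lambda_p(\mathcal{L})$, I would fix any $\lambda>\lambda_p(\mathcal{L})$ and invoke the Laplace representation of the resolvent, $R(\lambda,\mathcal{L})=\int_0^\infty e^{-\lambda t}{\bf U}^\mu(t)\,dt$, which is a positive bounded operator. Applying it to ${\bf 1}:=(1,1)\in X^{++}\times X^{++}$ produces $\varphi_\lambda := R(\lambda,\mathcal{L}){\bf 1}\in X^{++}\times X^{++}$; strict positivity follows because ${\bf U}^\mu(t){\bf 1}\to {\bf 1}$ uniformly as $t\downarrow 0$, so the integrand is uniformly bounded below on a small interval $[0,t_0]$. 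The identity $(\lambda I-\mathcal{L})\varphi_\lambda={\bf 1}$ then rearranges to $\mathcal{L}\varphi_\lambda = \lambda\varphi_\lambda - {\bf 1} \le_1 \lambda\varphi_\lambda$, whence $\lambda^*(\mathcal{L})\le \lambda$; letting $\lambda\downarrow\lambda_p(\mathcal{L})$ closes this inequality.

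The principal obstacle is the reverse inequality $\lambda_p(\mathcal{L})\le \lambda_*(\mathcal{L})$, because $\lambda_p(\mathcal{L})$ need not be a genuine eigenvalue: Weyl-type arguments applied to the splitting $\mathcal{L}=\mathcal{L}_0+\mathcal{C}$ of $\mathcal{L}$ into a matrix-multiplication operator with symbol $M(x):=A(x)-\mathrm{diag}(\mu_1\int_\Omega\kappa(x,\cdot)\,dy,\mu_2\int_\Omega\kappa(x,\cdot)\,dy)$ plus a compact integral operator $\mathcal{C}$ identify the essential spectrum with $\bigcup_{x\in\bar\Omega}\sigma(M(x))$, and $\lambda_p(\mathcal{L})$ can meet this set. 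My strategy is a compact perturbation plus a limiting procedure. Let $\mathcal{P}{\bf u}:=(\hat u_1+\hat u_2){\bf 1}$: this rank-one operator is compact and strongly positive on $X\times X$. Set $\mathcal{L}_n:=\mathcal{L}+\tfrac{1}{n}\mathcal{P}$; because the strong positivity of $\mathcal{P}$ lifts the spectral bound strictly above the essential spectral bound, and because the semigroup of $\mathcal{L}_n$ is irreducible (the hypotheses $\kappa>0$ from (H1) and $r,s\not\equiv 0$ from (H2) couple the two components), the Krein--Rutman--Nussbaum theorem applied to $\mathcal{L}_n+MI$ (positive for $M$ large) delivers a principal eigenpair $(\lambda_p(\mathcal{L}_n),\varphi_n)$ with $\varphi_n\in X^{++}\times X^{++}$. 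Upper semicontinuity of the spectrum under $\|\tfrac{1}{n}\mathcal{P}\|\to 0$ together with the monotonicity $\lambda_p(\mathcal{L}_n)\ge \lambda_p(\mathcal{L})$ forces $\lambda_p(\mathcal{L}_n)\downarrow\lambda_p(\mathcal{L})$. Normalizing $\|\varphi_n\|=1$ and securing a uniform (in $n$) Harnack-type lower bound $\varphi_n\ge c{\bf 1}$, the identity $\mathcal{L}\varphi_n=\lambda_p(\mathcal{L}_n)\varphi_n-\tfrac{1}{n}\mathcal{P}\varphi_n$ becomes $\mathcal{L}\varphi_n\ge_1 (\lambda_p(\mathcal{L}_n)-\delta_n)\varphi_n$ with $\delta_n=O(1/n)\to 0$, giving $\lambda_*(\mathcal{L})\ge \lambda_p(\mathcal{L}_n)-\delta_n\to \lambda_p(\mathcal{L})$. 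The hardest step is precisely this uniform Harnack-type lower bound for $\varphi_n$: one must rule out the possibility that $\varphi_n$ concentrates at a maximizer of $x\mapsto \lambda_+(M(x))$ as $\lambda_p(\mathcal{L}_n)\downarrow \lambda_p(\mathcal{L})$, and it is here that the strict positivity of $\kappa$ and the genuine coupling provided by $r,s\not\equiv 0$ must be pressed into service.
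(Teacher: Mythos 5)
Your proof of the easy direction, $\lambda^*(\mathcal{L})\le\lambda_p(\mathcal{L})$, via the Laplace representation of the resolvent applied to $\mathbf{1}$, is correct and is essentially identical to Step~1 of the paper's proof.

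The hard direction, $\lambda_p(\mathcal{L})\le\lambda_*(\mathcal{L})$, is where your proposal has a genuine gap. You rest the argument on the claim that the strong positivity of the rank-one perturbation $\tfrac{1}{n}\mathcal{P}$ ``lifts the spectral bound strictly above the essential spectral bound,'' so that the Krein--Rutman--Nussbaum theorem applies to $\mathcal{L}_n:=\mathcal{L}+\tfrac{1}{n}\mathcal{P}$. This claim is not justified, and in general it is false. Since $\tfrac{1}{n}\mathcal{P}$ and the integral part of $\mu\circ\mathcal{K}$ are both compact, $\sigma_{\mathrm{ess}}(\mathcal{L}_n)=\sigma_{\mathrm{ess}}(\mathcal{L})$; thus what you need is the strict inequality $\lambda_p(\mathcal{L}_n)>\sup\mathcal{R}e\,\sigma_{\mathrm{ess}}(\mathcal{L})$, which is exactly the statement that is known to fail for nonlocal dispersal operators when a certain integrability condition fails (this is why $\lambda_p$ is not always a principal eigenvalue, as noted in the paper with the references \cite{Co1,ShZh1}). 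A concrete scalar caricature: for $\mathcal{L}u=-h(x)u$ with $h_{\min}$ attained and $\int_\Omega\frac{dx}{h(x)-h_{\min}}<\infty$, the operator $-h\mathcal{I}+\tfrac{1}{n}\mathcal{P}$ has no eigenvalue above $-h_{\min}$ once $n$ is large, so the perturbation yields nothing. Even if one could arrange an eigenpair $(\lambda_p(\mathcal{L}_n),\varphi_n)$ for every $n$, the uniform Harnack-type lower bound $\varphi_n\ge c\mathbf{1}$ with $c$ independent of $n$ is precisely what one expects to \emph{fail} in the difficult case $\lambda_p=\sup\mathcal{R}e\,\sigma_{\mathrm{ess}}$: the normalized near-eigenfunctions concentrate near the maximizer of the symbol, so $\inf\varphi_n\to 0$ and $\delta_n$ does not tend to $0$. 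You flag this as ``the hardest step'' but offer no argument, and it is not a gap that can be filled by invoking $\kappa>0$ and $r,s\not\equiv0$ alone; the concentration phenomenon occurs despite irreducibility.

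The paper's route is fundamentally different and avoids these obstacles. Rather than trying to manufacture (near-)eigenfunctions, Step~2 of the paper argues by contradiction: if $\sigma:=\lambda_p(\mathcal{L})-\lambda_*(\mathcal{L})>0$, then $\lambda_p(\mathcal{L}-\lambda_*(\mathcal{L})\mathbf{1}\circ\mathcal{I})=\sigma>0$, and by \cite[Theorem 4-(i)]{OSUU2023} the associated logistic-type system \eqref{HK7} admits a strictly positive steady state $\mathbf{u}^{**}\in X^{++}\times X^{++}$. The pointwise identity this steady state satisfies immediately yields $(\varepsilon^*+\lambda_*(\mathcal{L}))\mathbf{u}^{**}\le_1\mathcal{L}\mathbf{u}^{**}$ with $\varepsilon^*>0$, contradicting the definition of $\lambda_*$. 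This produces the needed test function from the \emph{nonlinear} theory and works regardless of whether $\lambda_p$ is an eigenvalue of $\mathcal{L}$ or merely lies at the top of the essential spectrum. If you want to complete your argument along the lines you sketched, you would need to prove the lifting-above-essential-spectrum claim and the uniform Harnack bound, and neither is available in the generality required here.
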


 Theorem \ref{TH4} shows that $\lambda_p(\mathcal{\mu\circ\mathcal{K}+\mathcal{A}})$ equals the generalized principal eigenvalues $\lambda_*(\mathcal{\mu\circ\mathcal{K}+\mathcal{A}})$ and $\lambda^*(\mathcal{\mu\circ\mathcal{K}+\mathcal{A}})$; hence, it extends the known result for the single species nonlocal equations (see \cite{OnSh, BCV})  to two species nonlocal equations. It would be of great interest to know whether the conclusions of Theorem \ref{TH4} hold in time-periodic and bounded spatially heterogeneous environments or time-space periodic environments. We hope to address these questions in our future works.  The characterization of the principal spectrum point as in Theorem \ref{TH4} turns out to be an important tool in the proofs of some of our main results on the limit of principal spectrum point with respect to the dispersal rates. The next subsection discusses our main results  in this regard.

\subsubsection{ Asymptotic limits of $\lambda_p(\mu\circ\mathcal{K}+\mathcal{A})$ with respect to $\mu$.}

\quad As established in \cite[Theorem 1]{OSUU2023}, the sign of $\lambda_p(\mu\circ\mathcal{K}+\mathcal{A})$ completely determines the persistence of the species. Hence, it is of great biological interest to understand how the dispersal rate $\mu$ affect $\lambda_p(\mu\circ\mathcal{K}+\mathcal{A})$  since the influence of the dispersal rates on the principal spectrum point translates to the effects of dispersal rates on the species' persistence and extinction.  In this section, we present several results on the limit of $\lambda_p(\mu\circ\mathcal{K}+\mathcal{A})$ as $\mu$ approaches some critical values. These results will help to precisely determine the  behavior  of $\lambda_p(\mu\circ\mathcal{K}+\mathcal{A})$ in several instances.

\quad Our first result in this section concerns the limit of $\lambda_p(\mu\circ\mathcal{K}+\mathcal{A})$ for small $\mu$ and large $\mu$. 

\begin{theorem}\label{Th2-1}
\begin{itemize}
    \item[\rm (i)]  For each $x\in\overline{\Omega}$, let $\Lambda(x)$  denote the maximal eigenvalue of the cooperative matrix $A(x)$. Then
\begin{equation}\label{Th2-1-Eq}
    \lim_{\max\{\mu_1,\mu_2\}\to { 0}}\lambda_p(\mu\circ\mathcal{K}+\mathcal{A})=\Lambda_{\max}.
\end{equation}

\item[\rm (ii)] Let $\tilde{\Lambda}$ denote the maximal eigenvalue of the cooperative matrix $\hat{A}$. Then
\begin{equation}\label{Th2-2-eq}
\lim_{\min\{\mu_1,\mu_2\}\to\infty}\lambda_p(\mu\circ\mathcal{K}+\mathcal{A})=\tilde{\Lambda}.
\end{equation}
\end{itemize}

\end{theorem}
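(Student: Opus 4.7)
My plan is to use Theorem \ref{TH4} to identify $\lambda_p(\mu\circ\mathcal{K}+\mathcal{A})$ with the generalized principal eigenvalues $\lambda_*$ and $\lambda^*$, then construct matching sub- and super-solution test functions in each regime: super-solution tests on $\lambda^*$ yield upper bounds on $\lambda_p$ and sub-solution tests on $\lambda_*$ yield lower bounds. The two parts use quite different test functions reflecting their heuristics: for small $\mu$ the operator degenerates to pointwise multiplication by $A(x)$, whose spectrum is $\bigcup_{x}\sigma(A(x))$ with supremum $\Lambda_{\max}$; for large $\mu$ the principal eigenfunction is nearly spatially constant and the eigenvalue is governed by the averaged matrix $\hat{A}$.

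For part (i), since $r$ or $s$ may vanish and $A(x)$ might not admit a strictly positive eigenvector, I would perturb to $A_\tau(x):=A(x)+\tau J$ with $J=\bigl(\begin{smallmatrix}0&1\\1&0\end{smallmatrix}\bigr)$. For $\tau>0$ the matrix $A_\tau(x)$ has strictly positive off-diagonals, so Perron-Frobenius yields a continuous strictly positive eigenvector field $\xi_\tau\in X^{++}\times X^{++}$ with $A_\tau(x)\xi_\tau=\Lambda_\tau(x)\xi_\tau$ and $\max_{x}\Lambda_\tau(x)\to\Lambda_{\max}$ as $\tau\to 0$. Testing $\lambda^*$ with $\xi_\tau$ and using $A(x)\xi_\tau=\Lambda_\tau(x)\xi_\tau-\tau J\xi_\tau\le_1\Lambda_{\tau,\max}\xi_\tau$ together with the crude multiplicative bound $\mu\circ\mathcal{K}\xi_\tau\le_1 C_\tau\max\{\mu_1,\mu_2\}\xi_\tau$ (valid because $\xi_\tau\ge\xi_{\tau,\min}>0$) gives $\lambda^*\le\max_{x}\Lambda_\tau(x)+C_\tau\max\{\mu_1,\mu_2\}$; sending $\max\{\mu_1,\mu_2\}\to 0$ and then $\tau\to 0$ yields $\limsup\lambda_p\le\Lambda_{\max}$. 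For the matching lower bound, I would fix $x_0$ realizing $\Lambda(x_0)=\Lambda_{\max}$, let $\xi_0^\tau>0$ be the Perron eigenvector of $A(x_0)+\tau J$ (so that $\Lambda_0^\tau\ge\Lambda_{\max}$ by monotonicity in off-diagonal entries), and test $\lambda_*$ with $\varphi(x)=\xi_0^\tau\eta_\beta(x)+\rho{\bf 1}$, where $\eta_\beta\ge 0$ is a continuous bump supported in $B(x_0,\beta)\cap\bar\Omega$ with $\eta_\beta(x_0)=1$ and $\rho>0$ is a floor ensuring $\varphi\in X^{++}\times X^{++}$. Verification of $(\mu\circ\mathcal{K}+\mathcal{A})\varphi\ge_1(\Lambda_{\max}-\epsilon)\varphi$ uses that on $\mathrm{supp}\,\eta_\beta$ the eigenvector identity plus continuity of $A$ produces a favorable $\eta_\beta(x)\xi_{0,i}^\tau\epsilon/2$ term (after $\tau$, $\beta$, and $\max\mu$ errors absorb half of $\epsilon$), while the globally positive inflow $\mu_i\xi_{0,i}^\tau\int\kappa(x,y)\eta_\beta(y)\,dy\ge\mu_i\xi_{0,i}^\tau\kappa_{\min}\|\eta_\beta\|_{L^1}$ dominates the unfavorable $\rho[(A(x){\bf 1})_i-(\Lambda_{\max}-\epsilon)]$ residual provided $\rho$ is chosen proportional to $\min\{\mu_1,\mu_2\}$.

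For part (ii) I would use the two-scale ansatz $\varphi_i(x)=\xi_i+\mu_i^{-1}\psi_i(x)$, where $\xi\in\mathbb{R}^2$, $\xi>0$, is the Perron eigenvector of $\hat{A}$ satisfying $\hat{A}\xi=\tilde\Lambda\xi$ and the corrector $\psi_i\in C(\bar\Omega)$ solves
\begin{equation*}
\mathcal{K}\psi_i(x)=\tilde\Lambda\,\xi_i-(A(x)\xi)_i\quad\text{on }\bar\Omega.
\end{equation*}
The compatibility condition $\int_\Omega\mathcal{K}\psi_i\,dx=0$ (from symmetry of $\kappa$) matches the zero-mean property $\int_\Omega[\tilde\Lambda\xi_i-(A(x)\xi)_i]\,dx=|\Omega|[\tilde\Lambda\xi_i-(\hat{A}\xi)_i]=0$, and solvability in $C(\bar\Omega)$ follows from $\ker\mathcal{K}=\mathbb{R}\cdot 1$ (a nonlocal maximum-principle argument using $\kappa>0$) together with compactness of the integral part of $\mathcal{K}$. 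A direct computation, using $\mu_i\mathcal{K}\varphi_i=\mathcal{K}\psi_i$ and expanding $A(x)\varphi=A(x)\xi+O(1/\min\{\mu_1,\mu_2\})$, then gives the two-sided estimate $|(\mu\circ\mathcal{K}+\mathcal{A})\varphi-\tilde\Lambda\varphi|\le C\varphi/\min\{\mu_1,\mu_2\}$ uniformly in $x$, with $\varphi>0$ once $\min\{\mu_1,\mu_2\}$ is large; this single test function simultaneously yields $\lambda^*\le\tilde\Lambda+C/\min\{\mu_1,\mu_2\}$ and $\lambda_*\ge\tilde\Lambda-C/\min\{\mu_1,\mu_2\}$, hence $\lim\lambda_p=\tilde\Lambda$.

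The hardest step will be the lower bound in part (i): designing a single globally strictly positive $\varphi$ that remains a sub-solution both on and off the bump's support forces simultaneous tuning of the four small parameters $\tau$, $\beta$, $\rho$, and $\max\mu$, with $\rho$ ultimately scaled to $\min\{\mu_1,\mu_2\}$ so that the nonlocal inflow genuinely dominates the unfavorable floor contribution. A secondary technical point in part (ii) is the $C(\bar\Omega)$-solvability of $\mathcal{K}\psi=g$ under zero mean, which requires a Fredholm-type argument tailored to the nonlocal operator with symmetric positive kernel rather than an off-the-shelf appeal.
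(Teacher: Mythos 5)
Your upper bound in part (i) and all of part (ii) are sound, but the lower bound in part (i) --- the step you yourself single out as hardest --- has a genuine gap. The inequality $(\mu\circ\mathcal{K}+\mathcal{A})\varphi\ge_1(\Lambda_{\max}-\epsilon)\varphi$ must hold pointwise in \emph{both} components, and your $\varphi=\xi_0^\tau\eta_\beta+\rho{\bf 1}$ cannot achieve this when every maximizer of $\Lambda$ is degenerate, i.e.\ when the Perron vector of $A(x_0)$ has a zero component. Hypothesis {\bf (H2)} allows this: take $r\equiv 0$ on an open set on which $e<a+s$ and on which $\Lambda=-e$ attains its global maximum, while $\Lambda<\Lambda_{\max}$ wherever $r>0$ (symmetrically, $s(x_0)=0$ with $e>a+s$ breaks the second component). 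At such an $x_0$ your $\varphi_1=\xi^\tau_{0,1}\eta_\beta+\rho$ attains its maximum at $x_0$, so $\mathcal{K}\varphi_1(x_0)\le 0$, and since $r(x_0)=0$ the first component of the required inequality at $x_0$ reduces, after dividing by $\varphi_1(x_0)\ge\rho>0$, to $-(a+s)(x_0)\ge \Lambda_{\max}-\epsilon=-e(x_0)-\epsilon$, i.e.\ $\epsilon\ge (a+s)(x_0)-e(x_0)>0$, which is false for small $\epsilon$ \emph{independently of how you tune $\tau$, $\beta$, $\rho$ and $\mu$}. Your proposed mechanisms cannot repair this: the $\tau$-perturbation error $\tau(J\xi^\tau_0)_1=\tau\xi^\tau_{0,2}$ is of the same order as $\xi^\tau_{0,1}$ itself (here $\xi^\tau_{0,1}\sim \tau/((a+s)(x_0)-e(x_0))$), so it is not absorbable into $\tfrac{\epsilon}{2}\xi^\tau_{0,1}$, and the nonlocal inflow is $O(\mu_1)$ while the deficit is of order one relative to $\varphi_1(x_0)$. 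The root cause is the strict-positivity floor $\rho{\bf 1}$, which you added only because the definition \eqref{HK1} of $\lambda_*$ requires test functions in $X^{++}\times X^{++}$.

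This is precisely the obstruction the paper's Lemma \ref{Lem-4} is built to avoid: there the test function is $\psi{\bf Q}$ with the exact identity $A(x){\bf Q}(x)=\Lambda(x){\bf Q}(x)$ (no $\tau$-error), and it is \emph{allowed to vanish} --- in the degenerate situation $Q_1(x_0)=0$, so the bad component carries no mass. Since such a function is inadmissible in \eqref{HK1}, the paper does not derive the lower bound from $\lambda_*$ at all; it uses positivity of the semigroup $\{{\bf U}^{\mu}(t)\}_{t\ge0}$ together with the growth-bound characterization of Proposition \ref{Prop-1}, for which a nonnegative, not identically zero subsolution suffices. Replacing your floor argument by this semigroup comparison fixes part (i); your upper bound is then essentially the paper's Lemma \ref{Lem-3} (your $A+\tau J$ is their $r+\varepsilon$, $s+\varepsilon$). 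Your part (ii), by contrast, is correct and genuinely different from the paper: the paper proves that for large $\mu$ the principal spectrum point is a principal eigenvalue (Lemma \ref{Lem-5}), shows via $L^2$ energy estimates with $\beta_*$ that the eigenfunction is asymptotically constant, and passes to the limit using Perron--Frobenius for $\hat A$; your two-scale ansatz $\varphi_i=\xi_i+\mu_i^{-1}\psi_i$, with $\mathcal{K}\psi_i=\tilde\Lambda\xi_i-(A(\cdot)\xi)_i$ solvable on mean-zero data because $\mathcal{K}\le-\beta_*$ there (Lemma \ref{D-1-lemma}) and continuity bootstrapped from $\psi_i=\big(\int_\Omega\kappa(\cdot,y)\psi_i(y)\,dy-g_i\big)/K$, feeds a single test function into Theorem \ref{TH4} and even yields the quantitative rate $|\lambda_p(\mu\circ\mathcal{K}+\mathcal{A})-\tilde\Lambda|\le C/\min\{\mu_1,\mu_2\}$, which the paper's compactness argument does not provide.
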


 Theorem \ref{Th2-1} provides the exact information on the limits of the principal spectrum point for large and small dispersal rates of the population. It is important to indicate that $\tilde{\Lambda}$ is the maximum eigenvalue of the cooperative matrix resulting from taking the spatial averages of the parameter functions in the matrix $A(x)$, which is different from the average of the function $\Lambda(x)$. This should not cause any confusion in the presentation. We provide some remarks on the implications of the  results.

\begin{remark}
\begin{description}
    \item[\rm (i)] Thanks to Theorem \ref{Th2-1}-{\rm (i)}, the species persists for small dispersal rates if and only if $\Lambda(x)>0$ for some $x\in\Omega$. Observe that $\Lambda(x)>0$ if and only if $r(x)s(x)>(s(x)+a(x))e(x)$.  Note that $(a(x)+s(x))e(x)$ (resp. $r(x)s(x)$ ) is the product of the loss (resp. growth) in the rates of change of the density functions of the juveniles and adults.   Hence, $\Lambda(x)>0$ indicates that, at such a location, the species are able to reproduce and grow while having a small death rate.  If the habitat  has such locations,  then small dispersal rates will benefit species' survival.

    \item[\rm (ii)] Theorem \ref{Th2-1}-{\rm (ii)} indicates that for large dispersal rates of the species, $\lambda_p(\mu\circ\mathcal{K}+\mathcal{A})$ approximate the maximal eigenvalue of the matrix $\hat{A}$ defined by the spatial averages of the parameters as defined by \eqref{A-hat}.   Observe that $\tilde{\Lambda}>0$ if and only if $\hat{r}\cdot\hat{s}-(\hat{a}+\hat{s})\hat{e}>0$. This shows that for  large dispersal rates, the persistence of the species depends on the spatial averages of the functions $a, s, r$, and $e$ and not on their local spatial distributions.  It is important to note from Theorem \ref{Th2-1}-(i), that for small dispersal rates, it is the local distributions of these functions that determine the persistence of the species. Hence, if for example the functions $r$ and $s$ have disjoint support with the product of their averages higher than  the product of the averages of $(a+s)$ and $e$, then the species will go extinct if dispersing very slowly  whereas they will persist if diffusing very fast. In such scenario, small dispersal rates lead to extinction while large dispersal rates leads to persistence. This is in contrast with the prediction for persistence of species modeled  with unstructured population models.
\end{description}

\end{remark}

Now, we discuss the scenarios where one of the dispersal rate is small and the other is fixed.   For clarity in the statement of our results in these scenarios, we first state the following proposition.

\begin{proposition}\label{Main-prop1} Let $h,l\in X^{+}$ and $\xi>0$ be given. The algebraic equation in $\nu$,
\begin{equation}\label{WE-1}
    0=\lambda_p\Big(\xi\mathcal{K}+\big(\frac{rs}{\nu+h}-l\big)\mathcal{I}\Big)-\nu, \quad \quad  \ \nu>-h_{\min},
\end{equation}
has a (unique)  zero, denoted by $\lambda_{\xi}^{h,l}$, if and only if 
\begin{equation}\label{WE-2}
    \sup_{\nu> -h_{\min}}\Big(\lambda_p\Big(\xi\mathcal{K}+\big(\frac{rs}{\nu+h}-l\big)\mathcal{I}\Big)-\nu\Big)>0.
\end{equation}
Furthermore, $\lambda_{\xi}^{h,l}$, whenever it exists, has the same sign as $ \lambda_p\Big(\xi\mathcal{K}+\big(\frac{rs}{h}-l\big)\mathcal{I}\Big)$ if $h_{\min}>0$.
\end{proposition}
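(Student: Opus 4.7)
The plan is to recast the problem as the existence of a zero of the function
$$
F(\nu):=\lambda_p\Big(\xi\mathcal{K}+\big(\tfrac{rs}{\nu+h}-l\big)\mathcal{I}\Big)-\nu, \qquad \nu>-h_{\min}.
$$
Since $\nu+h\ge\nu+h_{\min}>0$ on this range and $r,s,h,l\in X$, the potential $V_\nu:=rs/(\nu+h)-l$ lies in $X$ and $\nu\mapsto V_\nu$ is continuous into $(X,\|\cdot\|_\infty)$. My three-step plan is: (i) show $F$ is continuous; (ii) show $F$ is strictly decreasing; (iii) show $F(\nu)\to-\infty$ as $\nu\to\infty$. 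Once these hold, the intermediate value theorem together with strict monotonicity yields a unique zero of $F$ in $(-h_{\min},\infty)$ precisely when $\sup_{\nu>-h_{\min}}F(\nu)>0$, which is \eqref{WE-2}.

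For strict monotonicity, if $\nu_1<\nu_2$ then $V_{\nu_1}\ge V_{\nu_2}$ pointwise because $rs\ge 0$, and any test function $\varphi\in X^{++}$ witnessing $\lambda\varphi\le(\xi\mathcal{K}+V_{\nu_2}\mathcal{I})\varphi$ also witnesses $\lambda\varphi\le(\xi\mathcal{K}+V_{\nu_1}\mathcal{I})\varphi$; so the scalar analogue of Theorem \ref{TH4} gives $\lambda_p(\xi\mathcal{K}+V_{\nu_1}\mathcal{I})\ge\lambda_p(\xi\mathcal{K}+V_{\nu_2}\mathcal{I})$. Subtracting $\nu$ gives $F(\nu_1)-F(\nu_2)\ge\nu_2-\nu_1>0$, so the $-\nu$ term alone forces strict decrease and strict monotonicity of $\lambda_p$ in the potential is not needed. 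For the limit at $+\infty$, $V_\nu\to-l$ uniformly, so $\lambda_p(\xi\mathcal{K}+V_\nu\mathcal{I})$ stays uniformly bounded (by $\|l\|_\infty$ plus the operator norm of $\xi\mathcal{K}$) and $F(\nu)\to-\infty$.

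For the sign assertion, when $h_{\min}>0$ the value $\nu=0$ lies in the admissible range and $F(0)=\lambda_p(\xi\mathcal{K}+(rs/h-l)\mathcal{I})$; since $F$ is strictly decreasing and continuous, the unique zero $\lambda_\xi^{h,l}$ must satisfy $\sgn(\lambda_\xi^{h,l})=\sgn(F(0))$, which is exactly the claimed sign correspondence.

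The main obstacle is step (i), continuity of $\nu\mapsto\lambda_p(\xi\mathcal{K}+V_\nu\mathcal{I})$, since $\lambda_p$ is only a spectral bound and need not be attained as an isolated eigenvalue with positive eigenfunction in this nonlocal setting. My intended resolution is to write $\xi\mathcal{K}+V_\nu\mathcal{I}$ as a multiplication operator plus a compact integral operator (the kernel part of $\mathcal{K}$), so that only the isolated-eigenvalue portion of the spectrum can move under norm-continuous perturbations; alternatively one can sandwich $\lambda_p$ between the lower-semicontinuous $\lambda_*$ and upper-semicontinuous $\lambda^*$ and invoke their equality from the scalar analogue of Theorem \ref{TH4}.
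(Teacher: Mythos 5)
Your proof is correct and follows essentially the same approach as the paper: the paper also defines $L(\nu):=\lambda_p(\xi\mathcal{K}+(\frac{rs}{\nu+h}-l)\mathcal{I})-\nu$, asserts it is continuous, strictly decreasing with $L(\nu)\to-\infty$, invokes the intermediate value theorem for the equivalence, and reads off the sign of $\lambda_\xi^{h,l}$ from $L(0)$ when $h_{\min}>0$. The only difference is that you supply the supporting arguments for continuity, strict monotonicity, and the limit at $+\infty$ (via the sup-inf characterization from Theorem \ref{TH4} and uniform convergence $V_\nu\to -l$), which the paper merely asserts.
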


\noindent Thanks to Proposition \eqref{Main-prop1}, we can now state our result on the limit of $\lambda_p(\mu\circ\mathcal{K}+\mathcal{A})$ as either $\mu_1\to0$ or $\mu_2\to 0$.
\begin{theorem}\label{Th2-3}
\begin{itemize}
\item[\rm (i)] Let $\xi=\mu_2$, $h=a+s$ and $l=e$ in Proposition \ref{Main-prop1}. Let  $\lambda_{\mu_2}^{a+s,e}$  denote the unique  zero  of the algebraic equation \eqref{WE-1} when inequality \eqref{WE-2} holds, and $\lambda_{\mu_2}^{a+s,e}:=-(a+s)_{\min}$ when \eqref{WE-2} doesn't hold. Then 
\begin{equation}
\lim_{\mu_1\to0^+}\lambda_p(\mu\circ\mathcal{K}+\mathcal{A})={\lambda}_{\mu_2}^{a+s,e}.
\end{equation}

\item[\rm (ii)]  Let $\xi= \mu_1$, \bk $h=e$ and $l=a+s$ in Proposition \ref{Main-prop1} . Let $\lambda_{\mu_1}^{e,a+s}$  denote the unique solution of the algebraic equation \eqref{WE-1} when inequality \eqref{WE-2} holds, and $\lambda_{\mu_1}^{e,a+s}:=-e_{\min}$ when \eqref{WE-2} doesn't hold. Then 
\begin{equation}
\lim_{\mu_2\to0^+}\lambda_p(\mu\circ\mathcal{K}+\mathcal{A})={\lambda}_{\mu_1}^{e,a+s}.
\end{equation}

\end{itemize}
\end{theorem}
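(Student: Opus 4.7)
The plan is to reduce the two-component eigenvalue problem to a scalar one by formally eliminating the first component, and to turn the resulting algebraic identity into rigorous two-sided bounds via the generalized principal eigenvalue characterization from Theorem~\ref{TH4}. Heuristically, if $(\lambda,(\varphi_1,\varphi_2))$ were a principal eigenpair of $\mu\circ\mathcal{K}+\mathcal{A}$ at $\mu_1=0$, then the first equation gives $\varphi_1 = r\varphi_2/(\lambda+a+s)$ (provided $\lambda>-(a+s)_{\min}$), and substituting into the second equation yields $M_\lambda \varphi_2 = \lambda \varphi_2$, where $M_\nu := \mu_2\mathcal{K} + \bigl(rs/(\nu+a+s) - e\bigr)\mathcal{I}$. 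Hence $\lambda$ must satisfy $\lambda = \lambda_p(M_\lambda)$, which is precisely equation \eqref{WE-1}, and the unique zero (when it exists) is $\lambda_{\mu_2}^{a+s,e}$. Part~(ii) is entirely symmetric (swap the two components), so I only discuss Part~(i).

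Assume first that \eqref{WE-2} holds, so $\nu^* := \lambda_{\mu_2}^{a+s,e} > -(a+s)_{\min}$ and $\lambda_p(M_{\nu^*}) = \nu^*$. For the upper bound, fix $\delta>0$ and, by Theorem~\ref{TH4} applied to the scalar operator $M_{\nu^*}$, produce $\psi \in X^{++}$ with $M_{\nu^*}\psi \le (\nu^* + \delta)\psi$. Set $\varphi_2 := \psi$ and $\varphi_1 := r\psi/(\nu^* + a + s) \in X^{++}$; the identity $(\nu^* + a + s)\varphi_1 = r\varphi_2$ yields
\begin{equation*}
(\mu\circ\mathcal{K}+\mathcal{A})(\varphi_1,\varphi_2) = \bigl(\mu_1\mathcal{K}\varphi_1 + \nu^*\varphi_1,\; M_{\nu^*}\psi\bigr).
\end{equation*}
Since $\mathcal{K}\varphi_1(x) \le C_\psi\,\varphi_1(x)$ for a constant $C_\psi$ depending only on $\kappa$ and $\psi$ (via the ratio $\max\varphi_1/\min\varphi_1$), and $M_{\nu^*}\psi \le (\nu^* + \delta)\psi$, we get
\begin{equation*}
(\mu\circ\mathcal{K}+\mathcal{A})(\varphi_1,\varphi_2) \le_1 (\nu^* + \delta + \mu_1 C_\psi)(\varphi_1,\varphi_2).
\end{equation*}
The $\lambda^*$-characterization combined with Theorem~\ref{TH4} then yields $\lambda_p(\mu\circ\mathcal{K}+\mathcal{A}) \le \nu^* + \delta + \mu_1 C_\psi$; sending $\mu_1\to 0^+$ and then $\delta\to 0^+$ gives $\limsup_{\mu_1\to 0^+}\lambda_p \le \nu^*$. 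The matching lower bound comes from the mirror construction using a sub-eigenfunction $M_{\nu^*}\psi \ge (\nu^* - \delta)\psi$ and the elementary pointwise bound $\mathcal{K}\varphi_1 \ge -k_{\max}\varphi_1$ with $k(x):=\int_\Omega \kappa(x,y)\,dy$, giving $\liminf_{\mu_1\to 0^+}\lambda_p \ge \nu^*$.

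When \eqref{WE-2} fails, one has $\lambda_p(M_\nu) \le \nu$ for every $\nu > -(a+s)_{\min}$, and $\nu^* := -(a+s)_{\min}$ by convention. The upper bound is obtained by the same super-eigenfunction construction with $\nu := -(a+s)_{\min} + \eta$ for arbitrary $\eta>0$, yielding $\limsup_{\mu_1\to 0^+}\lambda_p \le -(a+s)_{\min} + \eta$, and then letting $\eta\to 0^+$. For the matching lower bound, I would not use the substitution trick (which degenerates) but rather the standard cooperative-system monotonicity $\lambda_p(\mu\circ\mathcal{K}+\mathcal{A}) \ge \lambda_p(\mathrm{diag}(\mu_1\mathcal{K}-(a+s)\mathcal{I},\,\mu_2\mathcal{K}-e\mathcal{I}))$, the scalar limit $\lim_{\mu_1\to 0^+}\lambda_p(\mu_1\mathcal{K}-(a+s)\mathcal{I}) = -(a+s)_{\min}$, and the observation that in the failing case the inequality $\lambda_p(M_\nu)\le\nu$ together with $M_\nu\ge \mu_2\mathcal{K}-e\mathcal{I}$ forces $\lambda_p(\mu_2\mathcal{K}-e\mathcal{I}) \le -(a+s)_{\min}$, so the diagonal lower bound is at least $-(a+s)_{\min}$ in the limit.

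The main obstacle is the failing case: the constant $C_\psi$ in the super-eigenfunction estimate depends on $\psi$, which in turn depends on the regularization parameter $\eta$ and can blow up as $\eta\to 0^+$ because $\varphi_1 = r\psi/(\nu+a+s)$ may concentrate near the set where $a+s$ attains its minimum. The limits must therefore be taken in the correct order ($\mu_1\to 0^+$ first for fixed $\eta$, then $\delta\to 0^+$, and finally $\eta\to 0^+$) so that the error term $\mu_1 C_\psi$ is absorbed at each stage. A subsidiary technical step is to verify the strict monotonicity (and continuity) of $\nu\mapsto \lambda_p(M_\nu)-\nu$ on $(-(a+s)_{\min},\infty)$ that underlies uniqueness of $\nu^*$ in Proposition~\ref{Main-prop1}; this follows from standard monotonicity of the principal spectrum point in the zero-order coefficient together with continuous dependence on the operator.
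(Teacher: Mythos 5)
Your overall strategy is the same as the paper's: eliminate the first component through the relation $\varphi_1=r\varphi_2/(\nu+a+s)$, reduce to the scalar operator $M_\nu=\mu_2\mathcal{K}+\bigl(\tfrac{rs}{\nu+a+s}-e\bigr)\mathcal{I}$, convert scalar super/sub-eigenfunctions into two-component test pairs for Theorem \ref{TH4}, and in the degenerate case bound from below by $\lambda_p(\mu_1\mathcal{K}-(a+s)\mathcal{I})$ (dropping the nonnegative coupling terms) together with the scalar limit $\lambda_p(\mu_1\mathcal{K}-(a+s)\mathcal{I})\to-(a+s)_{\min}$ from \cite{SX2015}. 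In that sense the proposal reproduces the paper's proof, including the two-case structure and the order-of-limits bookkeeping.

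There is, however, one concrete step that fails as written. Hypothesis {\bf (H2)} only requires $r\in X^{+}\setminus\{0\}$, and the case where $r$ vanishes on part of $\Omega$ is squarely in scope (it is even the interesting case in Remark 2.1(ii), where $r$ and $s$ have disjoint supports). If $r(x_0)=0$ for some $x_0$, then your choice $\varphi_1:=r\psi/(\nu^*+a+s)$ is not in $X^{++}$, the bound $\mathcal{K}\varphi_1\le C_\psi\varphi_1$ (which needs $\inf\varphi_1>0$) is unavailable, and, worse, the first-component supersolution inequality itself is violated at $x_0$: there the left side is $(\nu^*+\delta+\mu_1 C_\psi)\varphi_1(x_0)=0$ while the right side is $\mu_1\int_\Omega\kappa(x_0,y)\varphi_1(y)\,dy>0$. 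So the upper bound — in both the case where \eqref{WE-2} holds and the regularized argument when it fails — does not go through under {\bf (H2)}. The paper's proof contains exactly the device you are missing: it takes $\varphi_{1,\varepsilon}^{+}:=\frac{r+\varepsilon}{\lambda_{\mu_2}^{a+s,e}+a+s}\varphi_{2,\varepsilon}^{+}$, so that the test pair is strictly positive and the extra term $\varepsilon\varphi_{2,\varepsilon}^{+}$ absorbs $\mu_1\mathcal{K}\varphi_{1,\varepsilon}^{+}$ for $\mu_1$ small relative to $\varepsilon$, at the harmless cost of an $O(\varepsilon)$ error in the eigenvalue bound. Your lower-bound pair $\varphi_1=r\varphi_2/(\nu^*+a+s)$ shares the positivity issue (the pointwise inequality survives at zeros of $r$, but membership in $X^{++}$ required by $\lambda_*$ does not); the paper is terse on this point as well, and a similar $\varepsilon$-perturbation, or assuming $r_{\min}>0$, repairs it. With that modification your argument matches the published one; the remaining ingredients (strict monotonicity and continuity of $\nu\mapsto\lambda_p(M_\nu)-\nu$, and the diagonal comparison in the degenerate case) are handled correctly.
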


\begin{remark}\label{R2-2} Let us assume  that $a+s$ is strictly positive on  $\overline{\Omega}$ and  fix the dispersal rate $\mu_2$ of the adult.
\begin{itemize}
\item[\rm (i)] Suppose that $\lambda_p\Big(\mu_2\mathcal{K}+\Big(\frac{rs}{a+s}-e\Big)\mathcal{I}\Big)>0$. It follows from Proposition \ref{Main-prop1} and Theorem \ref{Th2-3}-{\rm (i)} that   the species persists for small dispersal rate of the juvenile. Precisely, there is $\mu_{1,\mu_2}>0$ such that the species persists whenever $\mu_1<\mu_{1,\mu_2}$.  Observing that $\lambda_p\Big(\tilde{\mu}_2\mathcal{K}+\Big(\frac{rs}{a+s}-e\Big)\mathcal{I}\Big) $ is non-increasing in $\tilde{\mu}_2$,  (see \cite[Theorem 2.2(1)]{SX2015}),  then $\lambda_p\Big(\tilde{\mu}_2\mathcal{K}+\Big(\frac{rs}{a+s}-e\Big)\mathcal{I}\Big)>0$ for every $\tilde{\mu}_2<\mu_2$. Therefore, it becomes an interesting quest to know   whether the positive  number $\mu_{1,\mu_2}$ can be chosen such that the species persists whenever the dispersal rates $(\mu_1,\tilde{\mu}_2)$ satisfies $\mu_1<\mu_{1,\mu_2}$  and $\tilde{\mu}_2<\mu_2$. This question will be completely settled by Theorem \ref{Th2-6} below, which provides precise information on the sign of the principal spectrum point when the dispersal rates are near the $\mu_2$-axis. 

\item[\rm (ii)] If $\lambda_p\Big(\mu_2\mathcal{K}+\Big(\frac{rs}{a+s}-e\Big)\mathcal{I}\Big)<0$, it follows from Proposition \ref{Main-prop1} and Theorem \ref{Th2-3}-{\rm (i)} that   the species will go extinct if the juveniles disperse very slowly. In this case,  it is  unclear  whether fast movement of the juvenile would be beneficial for the species survival. So, a good understanding of the limit of the principal spectrum point for large dispersal of only the juvenile is essential. This will be investigated in the next result. 

\item[\rm (iii)] Observe  that $\lambda_p\Big(\tilde{\mu}_2\mathcal{K}+\Big(\frac{rs}{a+s}-e\Big)\mathcal{I}\Big) \to \Big(\frac{rs}{a+s}-e\Big)_{\max} $ as $\tilde{\mu}_2\to 0$. Note also that $\Big(\frac{rs}{a+s}-e\Big)_{\max}$ has the same sign as $\Lambda_{\max}$. Hence the results of Theorems \ref{Th2-1} and \ref{Th2-3} on the persistence of the species for small dispersal rates  are consistent. 
\end{itemize}
    
\end{remark}

 To discuss the limit of $\lambda_p(\mu\circ\mathcal{K}+\mathcal{A})$ as either $\mu_1\to\infty$ or $\mu_2\to\infty$, we will need the following intermediate result.

\begin{proposition}\label{Main-pop2}
     Fix $\xi>0$, $q,z\in X^{+}\setminus\{0\}$, and $l,h\in X^+$.   The algebraic equation in $\nu$,
    \begin{equation}\label{IL1}
        0=\int_{\Omega}q(x)(\nu\mathcal{I}-(\xi\mathcal{K}-l\mathcal{I}))^{-1}(z)(x)dx-\int_{\Omega}h(x)dx-\nu|\Omega|,\quad  \nu>\lambda_p(\xi\mathcal{K}-l\mathcal{I}) ,
    \end{equation}
     has a (unique) zero, denoted as $\tilde{\lambda}_{q,\xi,z}^{h,l}$, if and only if 
    \begin{equation}\label{IL2}
        \sup_{\nu>\lambda_p(\xi\mathcal{K}-l\mathcal{I})}\Big(\int_{\Omega}q(x)(\nu\mathcal{I}-(\xi\mathcal{K}-l\mathcal{I}))^{-1}(z)(x)dx-\int_{\Omega}h(x)dx-\nu|\Omega|\Big)>0.
    \end{equation}
    If \eqref{IL2} holds, then 
    \begin{equation}\label{Pl1}
        \begin{cases}
            \tilde{\lambda}_{q,\xi,z}^{h,l}|\Omega|= \int_{\Omega}q\psi -\int_{\Omega}h\cr 
            \tilde{\lambda}_{q,\xi,z}^{h,l}\psi=\xi\mathcal{K}(\psi)-l\psi +z
        \end{cases}
    \end{equation}
    where $\psi=(\tilde{\lambda}_{q,\xi,z}^{h,l}\mathcal{I}-(\xi\mathcal{K}-l\mathcal{I}))^{-1}$. 
    Furthermore,  the following  holds. 
    \begin{itemize}
        \item[\rm (i)] If  $\lambda_p(\xi\mathcal{K}-l\mathcal{I})$ is an eigenvalue of $\xi\mathcal{K}-l\mathcal{I}$ with a  positive eigenfunction, then \eqref{IL2} holds.

        \item[\rm (ii)] If $\lambda_p(\xi\mathcal{K}-l\mathcal{I})<0$ and \eqref{IL2} holds, the unique zero $\tilde{\lambda}_{q,\xi,p}^{h,l}$ has the same sign as the quantity $\sigma_{q,\xi,z}^{h,l}$ defined by 
        \begin{equation}\label{Pl2}
    \sigma_{q,\xi,z}^{h,l}:=\int_{\Omega}q(x)(l\mathcal{I}-\xi\mathcal{K})^{-1}(z)(x)dx-\int_{\Omega}h(x)dx.
    \end{equation}
    \end{itemize}

\end{proposition}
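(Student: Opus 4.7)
The plan is to work with the scalar function
\[
F(\nu) := \int_{\Omega}q(x)\bigl(\nu\mathcal{I}-(\xi\mathcal{K}-l\mathcal{I})\bigr)^{-1}(z)(x)\,dx-\int_{\Omega}h(x)\,dx-\nu|\Omega|,
\]
defined on the open interval $I := (\lambda_p(\xi\mathcal{K}-l\mathcal{I}),\infty)$, and to reduce every assertion to the monotonicity and endpoint behavior of $F$. First I would verify that the resolvent $R_\nu := (\nu\mathcal{I}-(\xi\mathcal{K}-l\mathcal{I}))^{-1}$ is a bounded, strongly positive linear operator on $X$ for each $\nu\in I$ (strong positivity uses hypothesis {\bf (H1)}, which provides $\kappa>0$), that the map $\nu\mapsto R_\nu$ is $C^1$ with derivative $-R_\nu^2$, and hence that $F\in C^1(I)$ with
\[
F'(\nu)=-\int_{\Omega}q(x)R_\nu^2(z)(x)\,dx-|\Omega| \le -|\Omega| <0.
\]
So $F$ is strictly decreasing. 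Since $\|R_\nu\|=O(1/\nu)$ as $\nu\to+\infty$, one also has $F(\nu)\to -\infty$. Combined with continuity, this at once yields that equation \eqref{IL1} admits a (necessarily unique) zero $\tilde\lambda_{q,\xi,z}^{h,l}$ if and only if $\sup_{\nu\in I}F(\nu)>0$, which is \eqref{IL2}; the identities \eqref{Pl1} then follow by reading off $F(\tilde\lambda_{q,\xi,z}^{h,l})=0$ together with the defining resolvent equation $\nu\psi=(\xi\mathcal{K}-l\mathcal{I})\psi+z$ evaluated at $\nu=\tilde\lambda_{q,\xi,z}^{h,l}$ and $\psi=R_{\tilde\lambda_{q,\xi,z}^{h,l}}z$.

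For item (i), I would exploit the fact that, since $\kappa$ is symmetric by {\bf (H1)}, the bounded operator $\xi\mathcal{K}-l\mathcal{I}$ is self-adjoint on $L^2(\Omega)$. Assuming $\varphi\in X^{++}$ is a positive eigenfunction for $\lambda_p:=\lambda_p(\xi\mathcal{K}-l\mathcal{I})$, testing the resolvent equation $(\nu\mathcal{I}-(\xi\mathcal{K}-l\mathcal{I}))\psi_\nu=z$ against $\varphi$ in $L^2$ yields
\[
(\nu-\lambda_p)\int_{\Omega}\varphi\,\psi_\nu\,dx=\int_{\Omega}\varphi z\,dx,
\]
and the right-hand side is strictly positive because $\varphi>0$ throughout $\overline\Omega$ and $z\in X^+\setminus\{0\}$; hence $\|\psi_\nu\|_{L^1(\Omega)}\ge\varphi_{\max}^{-1}\int_\Omega\varphi\psi_\nu\,dx$ blows up like $(\nu-\lambda_p)^{-1}$ as $\nu\to\lambda_p^+$. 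The pointwise form of the resolvent equation,
\[
(\nu+l(x)+\xi K(x))\psi_\nu(x)=z(x)+\xi\int_{\Omega}\kappa(x,y)\psi_\nu(y)\,dy,\qquad K(x):=\int_\Omega\kappa(x,y)\,dy,
\]
combined with the uniform lower bound $\kappa\ge\kappa_{\min}>0$, yields $\psi_\nu(x)\ge c_0\|\psi_\nu\|_{L^1(\Omega)}$ for some $c_0>0$ uniform in $x\in\overline\Omega$ and in $\nu$ over any compact subinterval of $I$. Hence $\inf_\Omega\psi_\nu\to+\infty$, so $\int_\Omega q\psi_\nu\,dx\ge(\inf_\Omega\psi_\nu)\int_\Omega q\,dx\to+\infty$ (using $\int_\Omega q>0$), and therefore $F(\nu)\to+\infty$ as $\nu\to\lambda_p^+$, which establishes \eqref{IL2}.

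Finally, for item (ii), the assumption $\lambda_p(\xi\mathcal{K}-l\mathcal{I})<0$ permits evaluating $F$ at $\nu=0\in I$:
\[
F(0)=\int_{\Omega}q(x)(l\mathcal{I}-\xi\mathcal{K})^{-1}(z)(x)\,dx-\int_{\Omega}h\,dx=\sigma_{q,\xi,z}^{h,l}.
\]
Since $F$ is strictly decreasing with unique zero $\tilde\lambda_{q,\xi,z}^{h,l}$, the sign of $\tilde\lambda_{q,\xi,z}^{h,l}$ agrees with the sign of $F(0)=\sigma_{q,\xi,z}^{h,l}$, and the conclusion follows. The main obstacle is step (i): the delicate point is upgrading the $L^1$-blowup $\|\psi_\nu\|_{L^1}\to\infty$ (supplied immediately by duality with $\varphi$) into the uniform pointwise blowup $\inf_\Omega\psi_\nu\to\infty$. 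This is precisely where the strict positivity of $\kappa$ plays its crucial role, because $q$ is only assumed nonnegative and may vanish on part of $\Omega$, so a naive comparison $q\ge c\varphi$ is unavailable and one must instead exploit the smoothing/positivity built into the integral kernel.
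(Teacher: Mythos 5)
Your proof is correct, and for the existence/uniqueness part, the identities \eqref{Pl1}, and item (ii) it follows essentially the same route as the paper: strict monotonicity of $F$ via the resolvent identity, $F(\nu)\to-\infty$ as $\nu\to\infty$, and evaluation of $F$ at $\nu=0$ when $\lambda_p(\xi\mathcal{K}-l\mathcal{I})<0$. Where you genuinely diverge is item (i). The paper proves $F(\nu)\to+\infty$ as $\nu\to\lambda_p^+(\xi\mathcal{K}-l\mathcal{I})$ through the Laplace-transform representation \eqref{Pl6-2} of the resolvent: it splits $\int_0^\infty=\int_0^1+\int_1^\infty$, uses strong positivity of the time-one map to replace $z$ by a function in $X^{++}$, and then compares $e^{t(\xi\mathcal{K}-l\mathcal{I})}\mathbf{1}\ge e^{t\lambda_p}\psi$ with the normalized eigenfunction to get the lower bound $\Psi_{\xi,l}(\cdot,\nu,1)\ge(\nu-\lambda_p)^{-1}\psi$. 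You instead exploit the symmetry of $\kappa$ (hypothesis {\bf (H1)}) to pair the resolvent equation against the positive eigenfunction in $L^2$, obtaining the exact blowup $(\nu-\lambda_p)\int_\Omega\varphi\,\psi_\nu=\int_\Omega\varphi z>0$, and then bootstrap the resulting $L^1$-blowup into a uniform pointwise blowup via the pointwise resolvent identity and $\kappa_{\min}>0$. Both arguments are valid under the standing hypotheses; yours is more elementary (no semigroup splitting) and yields a quantitative rate, while the paper's avoids self-adjointness entirely and therefore would carry over verbatim to nonsymmetric kernels. Two small points to tighten in your write-up: the constant $c_0$ in the pointwise bootstrap is uniform on $\nu$-intervals bounded above, e.g. $(\lambda_p,\lambda_p+1]$, which is what the limit $\nu\to\lambda_p^+$ actually requires (a ``compact subinterval of $I$'' excludes the relevant endpoint regime, even though your estimate clearly covers it, since $\nu+l(x)+\xi K(x)$ stays bounded above and is automatically positive there); and you should record explicitly, as the paper does via \eqref{Pl6-2}, that $R_\nu\ge 0$ for $\nu>\lambda_p$ (Laplace transform of the positive semigroup), since both your duality bound $\int_\Omega\varphi\psi_\nu\le\varphi_{\max}\lVert\psi_\nu\rVert_{L^1(\Omega)}$ and the pointwise lower bound use $\psi_\nu\ge0$.
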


 \bk

The following result is concerned with the limit of $\lambda_p(\mu\mathcal{K}+\mathcal{A})$ as either $\mu_1\to\infty$ or $\mu_2\to\infty$.

\bk

\begin{theorem}\label{Th2-4}
\begin{itemize}

\item[\rm (i)]  Let $\xi=\mu_2$, $q=r$, $z=s$, $l=e$ and $h=a+s$ in Proposition \ref{Main-pop2}. Let $\tilde{\lambda}^{a+s,e}_{r,\mu_2,s}$ denote the unique root of \eqref{IL1} when \eqref{IL2} holds, and $\tilde{\lambda}_{r,\mu_2,s}^{a+s,e}=\lambda_p(\mu_2\mathcal{K}-e\mathcal{I})$ when \eqref{IL2} doesn't hold.   Then 
\begin{equation}
\lim_{\mu_1\to\infty}\lambda_p(\mu\circ\mathcal{K}+\mathcal{A})=\tilde{\lambda}^{a+s,e}_{r,\mu_2,s}.
\end{equation}

\item[\rm (ii)]  Let $\xi=\mu_1$, $q=s$, $z=r$, $l=a+s$ and $h=e$ in Proposition \ref{Main-pop2}. Let $\tilde{\lambda}^{e,a+s}_{s,\mu_1,r}$ denote the unique root of \eqref{IL1} when \eqref{IL2} holds, and $\tilde{\lambda}_{s,\mu_1,r}^{e,a+s}=\lambda_p(\mu_1\mathcal{K}-(a+s)\mathcal{I})$ when \eqref{IL2} doesn't hold. 
 Then  
\begin{equation}
\lim_{\mu_1\to\infty}\lambda_p(\mu\circ\mathcal{K}+\mathcal{A})=\tilde{\lambda}^{e,a+s}_{s,\mu_1,r}.
\end{equation}
\end{itemize}
\end{theorem}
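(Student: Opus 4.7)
\textbf{Proof plan for Theorem \ref{Th2-4}.}
My approach is to pass to the limit in the eigenvalue-type problem that characterizes $\lambda_p(\mu\circ\mathcal{K}+\mathcal{A})$ via Theorem \ref{TH4}. I will treat part (i) in detail; part (ii) follows by the symmetric role of the two components (replacing the roles of $\mu_1,\mu_2$ and swapping $(r,s,e,a+s)$ with $(s,r,a+s,e)$). Fix $\mu_2>0$. Since \eqref{HK3} provides a bound on $\lambda_p(\mu\circ\mathcal{K}+\mathcal{A})$ that is uniform in $\mu_1$, every sequence $\mu_1^{(n)}\to\infty$ admits a subsequence along which $\lambda_p\to\lambda_\infty$ for some $\lambda_\infty\in\mathbb{R}$. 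The plan is to show that any such $\lambda_\infty$ equals $\tilde{\lambda}^{a+s,e}_{r,\mu_2,s}$, which by uniqueness pins down the full limit.

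The key mechanism is that a large $\mu_1$ forces the first component of an (approximate) principal eigenfunction to homogenize. By Theorem \ref{TH4}, I can choose, for each small $\epsilon>0$, a strictly positive pair $(\varphi_1^{(n)},\varphi_2^{(n)})\in X^{++}\times X^{++}$ satisfying
\begin{equation*}
\mu_1^{(n)}\mathcal{K}\varphi_1^{(n)} + r\varphi_2^{(n)} - (a+s)\varphi_1^{(n)} \ge (\lambda_p^{(n)}-\epsilon)\,\varphi_1^{(n)},\qquad
\mu_2\mathcal{K}\varphi_2^{(n)} + s\varphi_1^{(n)} - e\varphi_2^{(n)} \ge (\lambda_p^{(n)}-\epsilon)\,\varphi_2^{(n)},
\end{equation*}
with corresponding reverse inequalities for another choice (by \eqref{HK2}). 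Normalize so that $\|\varphi_1^{(n)}\|_\infty=1$. Dividing the first inequality by $\mu_1^{(n)}$ shows $\mathcal{K}\varphi_1^{(n)}=O(1/\mu_1^{(n)})$ uniformly in $x$, and by the positivity and continuity of $\kappa$ together with the estimates in Section \ref{Sec3}, I expect $\varphi_1^{(n)}\to c\in(0,1]$ uniformly on $\overline{\Omega}$. The second equation then provides a second-component resolvent identity: in the limit, $\varphi_2^{(n)}\to\psi$ where $\psi$ solves $(\lambda_\infty\mathcal{I}-(\mu_2\mathcal{K}-e\mathcal{I}))\psi = cs$, provided $\lambda_\infty>\lambda_p(\mu_2\mathcal{K}-e\mathcal{I})$.

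To extract the algebraic equation \eqref{IL1}, I integrate the first-component equation over $\Omega$. Using the symmetry $\kappa(x,y)=\kappa(y,x)$ from hypothesis (H1), $\int_\Omega \mathcal{K}\varphi_1^{(n)}\,dx=0$, so
\begin{equation*}
\int_\Omega r(x)\varphi_2^{(n)}(x)\,dx - \int_\Omega (a+s)(x)\varphi_1^{(n)}(x)\,dx = (\lambda_p^{(n)}+o(1))\int_\Omega \varphi_1^{(n)}(x)\,dx.
\end{equation*}
Letting $n\to\infty$ and using $\varphi_1^{(n)}\to c$ and $\varphi_2^{(n)}\to c\,(\lambda_\infty\mathcal{I}-(\mu_2\mathcal{K}-e\mathcal{I}))^{-1}(s)$, I obtain (after dividing by $c>0$)
\begin{equation*}
\int_\Omega r(x)(\lambda_\infty\mathcal{I}-(\mu_2\mathcal{K}-e\mathcal{I}))^{-1}(s)(x)\,dx - \int_\Omega (a+s)(x)\,dx = \lambda_\infty|\Omega|,
\end{equation*}
which is precisely \eqref{IL1} with the specified parameters. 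The second set of (reverse) inequalities gives the same limit identity, confirming $\lambda_\infty=\tilde{\lambda}^{a+s,e}_{r,\mu_2,s}$ whenever \eqref{IL2} holds.

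The main obstacle is the degenerate case in which \eqref{IL2} fails. In that situation, the algebraic equation has no root above $\lambda_p(\mu_2\mathcal{K}-e\mathcal{I})$, so any subsequential limit $\lambda_\infty$ must satisfy $\lambda_\infty\le \lambda_p(\mu_2\mathcal{K}-e\mathcal{I})$. To conclude $\lambda_\infty = \lambda_p(\mu_2\mathcal{K}-e\mathcal{I})$, I will prove the matching lower bound by choosing test pairs of the form $(\delta,\varphi_2)$ with $\delta>0$ small and $\varphi_2$ an approximate positive eigenfunction of $\mu_2\mathcal{K}-e\mathcal{I}$ (available from the single-equation theory in \cite{OnSh,SX2015}), and then inserting these into the definition \eqref{HK1}; the constant part $\delta$ kills the $\mu_1\mathcal{K}$ term, reducing the inequality to $(\lambda+a+s)\delta\le r\varphi_2$ and $\lambda\varphi_2\le \mu_2\mathcal{K}\varphi_2+s\delta-e\varphi_2$, both of which can be arranged for $\lambda$ arbitrarily close to $\lambda_p(\mu_2\mathcal{K}-e\mathcal{I})$ by taking $\delta$ small enough (with the sharpness of this bound ensured by the failure of \eqref{IL2} preventing a larger limit). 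Conversely, monotonicity of $\lambda_p$ in the zeroth-order term and comparison with the decoupled operator $(0,\mu_2)\circ\mathcal{K}+(-(a+s)\mathcal{I},-e\mathcal{I})$ yields $\lambda_p(\mu\circ\mathcal{K}+\mathcal{A})\ge \lambda_p(\mu_2\mathcal{K}-e\mathcal{I})$ uniformly in $\mu_1$, closing the argument. The hardest technical step will be the uniform compactness/convergence of $\varphi_1^{(n)}$ to a constant on $\overline{\Omega}$, since the semigroup generated by $\mathcal{K}$ is not compact; I anticipate overcoming this via an iteration of the integral identity $\varphi_1^{(n)}(x)=[\text{const} + r\varphi_2^{(n)}/\mu_1^{(n)} + \cdots]$ combined with the uniform boundedness of $\varphi_2^{(n)}$ obtained from the second equation.
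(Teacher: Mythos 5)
Your plan shares the same skeleton as the paper's proof — invoke the sup-inf characterization of Theorem \ref{TH4}, build positive test pairs, integrate the first-component equation over $\Omega$ (using $\int_\Omega\mathcal{K}u=0$) to extract the algebraic equation \eqref{IL1}, and split into a nondegenerate case and a degenerate case whose lower bound is handled by a comparison with $\lambda_p(\mu_2\mathcal{K}-e\mathcal{I})$. But the paper does not pass to the limit in the system's approximate eigenproblem directly. Instead (Lemma \ref{Lem-6}), it fixes the resolvent $\Psi_{\mu_2,e}(\cdot,\nu,\cdot)$ for the second component and studies the decoupled scalar operator $\mu_1\mathcal{K}+\Psi_{\mu_2,e}(\cdot,\nu,s\mathcal{I})r-(a+s)\mathcal{I}$, which \emph{does} have a principal eigenvalue with an honest positive eigenfunction for $\mu_1$ large (it is a compact perturbation of a multiplication operator once $\mu_1K_{\min}$ dominates). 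That eigenfunction is then normalized in $L^2$ and homogenized via the spectral gap $\beta_*$, exactly as in Theorem \ref{Th2-1}(ii). From the resulting pair $(\varphi_1^{\mu_1},\Psi_{\mu_2,e}(\cdot,\nu,s\varphi_1^{\mu_1}))$, the system satisfies one equation with eigenvalue $\lambda_p(\mu_1\mathcal{K}+\Psi r-(a+s))$ and the other with eigenvalue $\nu$, so Theorem \ref{TH4} sandwiches $\lambda_p(\mu\circ\mathcal{K}+\mathcal{A})$ between $\min$ and $\max$ of these two scalars, which both converge to $\tilde\lambda^{a+s,e}_{r,\mu_2,s}$ at $\nu=\tilde\lambda^{a+s,e}_{r,\mu_2,s}$.

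Your version, as written, has genuine gaps that the decoupling trick is designed to avoid. First, dividing $\mu_1^{(n)}\mathcal{K}\varphi_1^{(n)}+r\varphi_2^{(n)}-(a+s)\varphi_1^{(n)}\ge(\lambda_p^{(n)}-\epsilon)\varphi_1^{(n)}$ by $\mu_1^{(n)}$ does \emph{not} give $\mathcal{K}\varphi_1^{(n)}=O(1/\mu_1^{(n)})$ uniformly — it gives only a one-sided bound $\mathcal{K}\varphi_1^{(n)}\ge-C/\mu_1^{(n)}$; the reverse inequality holds for a \emph{different} pair $(\tilde\varphi_1^{(n)},\tilde\varphi_2^{(n)})$, so you cannot combine them for a single sequence. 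The $L^2$ argument (multiply by $\varphi_1^{(n)}$, integrate, invoke \eqref{D-Eq4}) closes only for the subsolution family — for the supersolution it produces an inequality in the useless direction — so you would obtain $\lambda_\infty\le\tilde\lambda^{a+s,e}_{r,\mu_2,s}$ but not the matching lower bound. Second, the resolvent identity $\varphi_2^{(n)}\to c\Psi_{\mu_2,e}(\cdot,\lambda_\infty,s)$ is conditional on $\lambda_\infty>\lambda_p(\mu_2\mathcal{K}-e\mathcal{I})$, but nothing in your plan rules out the possibility $\lambda_\infty=\lambda_p(\mu_2\mathcal{K}-e\mathcal{I})$ even when \eqref{IL2} holds; some additional construction (which is essentially Lemma \ref{Lem-6}) is needed to show $\liminf_{\mu_1\to\infty}\lambda_p(\mu\circ\mathcal{K}+\mathcal{A})>\lambda_p(\mu_2\mathcal{K}-e\mathcal{I})$ in that case. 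Third, your degenerate-case lower bound via the test pair $(\delta,\varphi_2)$ rests on the inequality $(\lambda+a+s)\delta\le r\varphi_2$, which fails wherever $r$ vanishes; hypothesis (H2) only assumes $r\in X^+\setminus\{0\}$, so this argument implicitly uses $r_{\min}>0$. The paper instead takes $\varphi=((\lambda_p+\epsilon)\mathcal{I}-(\mu\circ\mathcal{K}+\mathcal{A}))^{-1}\mathbf{1}$ from \eqref{IL9}--\eqref{IL10}, drops $s\varphi_1\ge0$ in the second component, and reads off $\lambda_p(\mu\circ\mathcal{K}+\mathcal{A})\ge\lambda_p(\mu_2\mathcal{K}-e\mathcal{I})$ for all $\mu_1$ without any sign restriction on $r$. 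In short, your approach is more naive than the paper's and would need substantial additional machinery (at minimum an $L^2$ normalization, a separate supersolution construction, and a strict-lower-bound argument) to be completed.
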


\bk

The following result concerns the scenario where the members of one  subgroup of the population disperse very slowly while the members of the other subgroup disperse very fast.  

\begin{theorem}\label{Th2-5}\begin{itemize}\item[\rm (i)]Let $\eta^*_1$ be  given by 
\begin{equation}\label{Th2-5-Eq1}
    \eta_1^*=\inf\left\{ \eta\in (-(a+s)_{\min},\infty)\ :\ \int_{\Omega}\left(\frac{rs}{\eta+a+s}-(e+\eta)\right)<0\right\}.
\end{equation}
Then 
\begin{equation}\label{Th2-5-Eq2}
    \lim_{\mu_1\to 0,\mu_2\to\infty}\lambda_p(\mu\circ\mathcal{K}+\mathcal{A})=\eta_1^*.
\end{equation}
Moreover  $\eta_1^*$ and $\lim_{\eta\to0^+}\int_{\Omega}\left(\frac{rs}{\eta+a+s}-(\eta+e)\right)$ have the same sign. 

\item[\rm (ii)] Let $\eta^*_2$ be  given by 
\begin{equation}\label{Th2-5-Eq4}
    \eta_2^*=\inf\left\{ \eta\in (-e_{\min},\infty)\ :\ \int_{\Omega}\left(\frac{rs}{\eta+e}-(a+s+\eta)\right)<0\right\}.
\end{equation}
Then 
\begin{equation}\label{Th2-5-Eq5}
    \lim_{\mu_1\to \infty,\mu_2\to0}\lambda_p(\mu\circ\mathcal{K}+\mathcal{A})=\eta_2^*.
\end{equation}Moreover  $\eta_2^*$ and $\lim_{\eta\to0^+}\int_{\Omega}\left(\frac{rs}{\eta+e}-(\eta+a+s)\right)$ have the same sign.

 \end{itemize}
    
\end{theorem}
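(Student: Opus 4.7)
The plan for Theorem \ref{Th2-5}(i) is to use the characterization $\lambda_p=\lambda_*=\lambda^*$ from Theorem \ref{TH4} to sandwich $\lambda_p(\mu\circ\mathcal{K}+\mathcal{A})$ near $\eta_1^*$ via explicit test pairs modeled on the formal singular limit: the large $\mu_2$ drives $u_2$ towards a spatial constant while the small $\mu_1$ reduces the first equation to the algebraic constraint $(a+s+\nu)u_1=ru_2$. Part (ii) follows by swapping the roles of $(u_1,\mu_1)$ and $(u_2,\mu_2)$, and the sign statement for $\eta_1^*$ is immediate from strict monotonicity of $\eta\mapsto\int_\Omega\bigl(rs/(\eta+a+s)-(e+\eta)\bigr)dx$ on $(-(a+s)_{\min},\infty)$.

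For the upper bound $\limsup\lambda_p\le\eta_1^*$, fix $\nu>\eta_1^*$, so that $C:=|\Omega|^{-1}\int_\Omega\bigl((e+\nu)-sr/(a+s+\nu)\bigr)dx>0$. Let $\psi\in X$ be the zero-mean solution of $\mathcal{K}\psi=(e+\nu)-sr/(a+s+\nu)-C$, supplied by the Fredholm alternative applied to $\mathcal{K}=T-\bar\kappa\mathcal{I}$ (a compact perturbation of a multiplication operator, with one-dimensional kernel spanned by constants and closed range equal to the zero-mean continuous functions). Set $\varphi_2:=1+\mu_2^{-1}\psi$, and define $\varphi_1$ implicitly by forcing the first equation to hold with equality:
\[
\varphi_1:=\bigl((a+s+\nu)\mathcal{I}-\mu_1\mathcal{K}\bigr)^{-1}(r\varphi_2).
\]
For $\mu_1$ small this operator has a positive inverse (Neumann series), and one iteration smears the nontrivial nonnegative datum $r\varphi_2$ across $\overline\Omega$ since $\kappa>0$, yielding $\varphi_1\in X^{++}$. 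By construction the first component of $(\mu\circ\mathcal{K}+\mathcal{A}-\nu\mathcal{I})\varphi$ vanishes, while the second equals $s(\varphi_1-r/(a+s+\nu))-C-(e+\nu)\psi/\mu_2=-C+O(\mu_1+\mu_2^{-1})$, which is strictly negative for $\mu_1$ small and $\mu_2$ large. Hence $\lambda^*(\mu\circ\mathcal{K}+\mathcal{A})\le\nu$ by \eqref{HK2}, and $\limsup\lambda_p\le\eta_1^*$ on letting $\nu\downarrow\eta_1^*$.

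For the lower bound $\liminf\lambda_p\ge\eta_1^*$, assume $\eta_1^*>-(a+s)_{\min}$ (the boundary case is handled separately) and pick $\nu<\eta_1^*$, so that $\tilde C:=|\Omega|^{-1}\int_\Omega\bigl(sr/(a+s+\nu)-(e+\nu)\bigr)dx>0$. Running the same construction with the sign of the correction flipped --- let $\tilde\psi$ be the zero-mean solution of $\mathcal{K}\tilde\psi=(e+\nu)-sr/(a+s+\nu)+\tilde C$, set $\varphi_2:=1+\mu_2^{-1}\tilde\psi$ and $\varphi_1:=\bigl((a+s+\nu)\mathcal{I}-\mu_1\mathcal{K}\bigr)^{-1}(r\varphi_2)$ --- the first component of $(\mu\circ\mathcal{K}+\mathcal{A}-\nu\mathcal{I})\varphi$ again vanishes and the second equals $+\tilde C+O(\mu_1+\mu_2^{-1})>0$ for $\mu_1$ small and $\mu_2$ large. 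Therefore $\lambda_*(\mu\circ\mathcal{K}+\mathcal{A})\ge\nu$ by \eqref{HK1}, giving $\liminf\lambda_p\ge\eta_1^*$, and the two bounds combine to yield \eqref{Th2-5-Eq2}.

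The principal obstacle is the solvability and $X$-regularity of the correctors $\psi,\tilde\psi$ together with a sup-norm bound locally uniform in $\nu$ near $\eta_1^*$, so that the $O(\mu_2^{-1})$ perturbation can be dominated by $\pm C$. The $L^2$-theory is immediate from self-adjointness of $\mathcal{K}$, but continuity in $X$ requires the decomposition $\mathcal{K}=T-\bar\kappa\mathcal{I}$ with $T$ compact on $X$ and Riesz--Schauder theory on the zero-mean subspace. A secondary technicality is the degenerate boundary case $\eta_1^*=-(a+s)_{\min}$, where the test operator $(a+s+\nu)\mathcal{I}$ becomes nearly singular and a continuity-in-$\nu$ argument, together with \eqref{HK3}, closes the gap.
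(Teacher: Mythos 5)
Your construction is a genuinely different route from the paper's. The paper builds the test pair by nesting resolvents: it first defines $\varphi_1=(\eta\mathcal{I}-(\mu_1\mathcal{K}-(a+s)\mathcal{I}))^{-1}r$, then $\varphi_2=(\eta\mathcal{I}-(\mu_2\mathcal{K}-e\mathcal{I}))^{-1}(s\varphi_1)$, and proves by an $L^2$-to-$L^\infty$ bootstrap plus a compactness-contradiction argument that $\varphi_2$ converges uniformly to the explicit constant $1+\mathcal{L}(\eta)/\bigl(|\Omega|(\eta+\hat e)\bigr)$; the sign of $\mathcal{L}(\eta)$ then decides whether the pair is a super- or sub-solution for $\mu_1$ small and $\mu_2$ large. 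You instead prescribe the adult component $\varphi_2=1+\mu_2^{-1}\psi$ as a two-term asymptotic expansion, with the corrector $\psi$ obtained from the Fredholm alternative for $\mathcal{K}$ (whose range on $X$ is indeed the zero-mean continuous functions, after an $L^2$-solvability plus bootstrap step), and then let the juvenile resolvent absorb $r\varphi_2$. Both yield the same sandwich $\lambda_*\le\lambda_p\le\lambda^*$; yours is more constructive (it makes the $O(\mu_2^{-1})$ error explicit and needs no compactness argument to control $\|\varphi_2\|_\infty$), while the paper's avoids the solvability theory for $\mathcal{K}\psi=g$ entirely and is therefore somewhat more self-contained. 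One remark: the uniformity in $\nu$ of $\|\psi\|_\infty$ that you worry about is actually not needed, since the limits are iterated — you fix $\nu$, push $\mu_1\to0$, $\mu_2\to\infty$, and only afterwards let $\nu\to\eta_1^*$.

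There is, however, a genuine gap in your treatment of the degenerate case $\eta_1^*=-(a+s)_{\min}$. Your upper bound goes through there (any $\nu>\eta_1^*$ keeps $a+s+\nu>0$ pointwise), but for the lower bound the issue is not that $(a+s+\nu)\mathcal{I}$ is \emph{nearly} singular — it is that your construction requires $\nu<\eta_1^*=-(a+s)_{\min}$, so $a+s+\nu$ changes sign and there is no positive resolvent at all; the Neumann series does not exist. Appealing to \eqref{HK3} does not close the gap either: \eqref{HK3} only gives $\lambda_p\ge-\max\{\|a+s\|_\infty,\|e\|_\infty\}$, which is weaker than $\ge-(a+s)_{\min}$. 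You need a separate argument, for instance the paper's: for any $\mu$ and any $\varepsilon>0$ choose $\varphi\in X^{++}\times X^{++}$ with $(\lambda_p(\mu\circ\mathcal{K}+\mathcal{A})+\varepsilon)\varphi\ge_1(\mu\circ\mathcal{K}+\mathcal{A})\varphi$; dropping the nonnegative coupling term $r\varphi_2$ in the first line gives $(\lambda_p+\varepsilon)\varphi_1\ge\mu_1\mathcal{K}\varphi_1-(a+s)\varphi_1$, hence $\lambda_p(\mu\circ\mathcal{K}+\mathcal{A})\ge\lambda_p(\mu_1\mathcal{K}-(a+s)\mathcal{I})$, and the scalar limit $\lambda_p(\mu_1\mathcal{K}-(a+s)\mathcal{I})\to-(a+s)_{\min}$ as $\mu_1\to0$ then delivers $\liminf\lambda_p\ge\eta_1^*$. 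With that replacement your argument is complete.
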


Thanks to the above results on the limit of $\lambda_p(\mu\circ\mathcal{K}+\mathcal{A})$ with respect to the dispersal rates $\mu$, we can now provide a complete information on the sign of $\lambda_p(\mu\circ\mathcal{K}+\mathcal{A})$ when $\mu$ is near either the $\mu_1$-axis or $\mu_2$-axis in the $\mu_1\times\mu_2$-plane.

\begin{theorem}\label{Th2-6}  Let $\eta_1^*$ be given by Theorem \ref{Th2-5}-{\rm (i)}.
\begin{itemize}
    \item[\rm (i)] If $\eta_1^*>0$, then there exist  $\delta_1^{*}>0$ and $m^*>0$ such that $\lambda_{p}(\mu\circ\mathcal{K}+\mathcal{A})\ge m^*$ for every $\mu=(\mu_1,\mu_2)\in(0,\delta_1^{*})\times\mathbb{R}_+$.

    \item[\rm (ii)] If $\Lambda_{\max}<0$, then there exist $\delta_1^{*}>0$ and $m^*<0$ such that  $\lambda_{p}(\mu\circ\mathcal{K}+\mathcal{A})<m^*$ for every $\mu=(\mu_1,\mu_2)\in(0,\delta_1^{*})\times\mathbb{R}_+$.

    \item[\rm (iii)] If $(a+s)_{\min}>0$ and $\eta_1^*<0<\Lambda_{\max}$, then  for every $0<\varepsilon\ll 1$ there exist $\delta_{1}^*>0$ and $m^*>0$ such that $\lambda_p(\mu\circ\mathcal{K}+\mathcal{A})>m^*$ for every $\mu=(\mu_1,\mu_2)\in(0,\delta_{1}^*)\times(0,\mu_2^*-\varepsilon)$ and $\lambda_p(\mu\circ\mathcal{K}+\mathcal{A})<-m^*$ for every $\mu=(\mu_1,\mu_2)\in(0,\delta_{1}^*)\times(\mu_2^*+\varepsilon,\infty),$  where $\mu_2^*>0$ is uniquely determined by $\lambda_{p}\Big(\mu_2^*\mathcal{K}+\Big(\frac{rs}{a+s}-e\Big)\mathcal{I}\Big)=0$.
\end{itemize}
    
\end{theorem}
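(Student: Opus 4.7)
The plan is to prove all three parts by the same contradiction and compactification scheme on the extended region $[0,\delta]\times[0,\infty]$, combining Proposition~\ref{Main-prop1} with the one-parameter limit results Theorems~\ref{Th2-1}(i), \ref{Th2-3}(i), and \ref{Th2-5}(i). When $\mu_1\to 0$, the three boundary strata of the $\mu_2$-axis ($\mu_2\to 0$, $\mu_2$ in a compact interval of $(0,\infty)$, and $\mu_2\to\infty$) yield the three candidate limits $\Lambda_{\max}$, $\lambda_{\mu_2}^{a+s,e}$, and $\eta_1^*$; each hypothesis in the statement is tailored so that these three candidates have a common sign (parts (i), (ii)) or separate cleanly across the critical value $\mu_2^*$ (part (iii)).

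Before running the contradictions, I would collect three preliminary facts about the function $f(\mu_2):=\lambda_{\mu_2}^{a+s,e}$ on $(0,\infty)$. First, $f$ is continuous and non-increasing: both properties follow from Proposition~\ref{Main-prop1} together with the monotonicity statement \cite[Thm.~2.2(1)]{SX2015} that $\mu_2\mapsto \lambda_p(\mu_2\mathcal K+V\mathcal I)$ is continuous and non-increasing, applied to the implicit relation $\nu=\lambda_p(\mu_2\mathcal K+(\tfrac{rs}{\nu+a+s}-e)\mathcal I)$ that defines $\nu=f(\mu_2)$. Second, $f(0^+)=\Lambda_{\max}$ and $f(\infty)=\eta_1^*$; these follow from the one-parameter limits $\lambda_p(\mu_2\mathcal K+V\mathcal I)\to V_{\max}$ as $\mu_2\to 0$ and $\to \widehat V$ as $\mu_2\to\infty$, combined with the pointwise identity $\Lambda(x)=\nu \iff \tfrac{rs(x)}{a+s(x)+\nu}-(e(x)+\nu)=0$. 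Third, $\Lambda_{\max}\ge \eta_1^*$: at $\nu=\eta_1^*$ the spatial average of $\tfrac{rs}{a+s+\nu}-(e+\nu)$ vanishes, hence its pointwise maximum is $\ge 0$, and since both $\Lambda_{\max}$ and $\eta_1^*$ are uniquely characterized as zeroes of strictly decreasing functions of $\nu$ (pointwise max and spatial average, respectively), the value giving max${}=0$ must lie to the right of the one giving average${}=0$.

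With these facts in hand, parts (i) and (ii) run symmetrically. For (i), assume $\eta_1^*>0$ and suppose for contradiction that there is a sequence $(\mu_1^{(n)},\mu_2^{(n)})$ with $\mu_1^{(n)}\to 0$ and $\lambda_p(\mu^{(n)}\circ\mathcal K+\mathcal A)\to L\le 0$. After passing to a subsequence, $\mu_2^{(n)}\to \bar\mu_2\in[0,\infty]$. The cases $\bar\mu_2\in\{0,\infty\}$ immediately give $L\in\{\Lambda_{\max},\eta_1^*\}$ by Theorems~\ref{Th2-1}(i) and \ref{Th2-5}(i); for $\bar\mu_2\in(0,\infty)$, joint continuity of $\lambda_p$ on $(0,\infty)^2$ together with Theorem~\ref{Th2-3}(i) at $\mu_2=\bar\mu_2$ yields $L=f(\bar\mu_2)$. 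In every case the preliminary step forces $L\ge \eta_1^*>0$, contradicting $L\le 0$, so $m^*:=\eta_1^*/2$ suffices. Part (ii) is the mirror image, using $f(\mu_2)\le \Lambda_{\max}<0$ and $\eta_1^*\le \Lambda_{\max}<0$ to rule out any nonnegative subsequential limit.

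For part (iii), the extra hypothesis $(a+s)_{\min}>0$ unlocks the last assertion of Proposition~\ref{Main-prop1}: the sign of $f(\mu_2)$ agrees with that of the scalar principal spectrum point $\lambda_p\bigl(\mu_2\mathcal K+(\tfrac{rs}{a+s}-e)\mathcal I\bigr)$. The latter is continuous and strictly decreasing in $\mu_2$ by \cite[Thm.~2.2(1)]{SX2015}, with limits $\Lambda_{\max}>0$ at $0$ and $\widehat{\tfrac{rs}{a+s}-e}<0$ at $\infty$ (negative because $\eta_1^*<0$ forces $\int_\Omega[\tfrac{rs}{a+s}-e]<0$ in the notation of \eqref{Th2-5-Eq1}), so it admits a unique zero $\mu_2^*\in(0,\infty)$; consequently $f>0$ on $(0,\mu_2^*)$ and $f<0$ on $(\mu_2^*,\infty)$. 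The same contradiction argument is then run separately on the compact extended strips $[0,\delta]\times[0,\mu_2^*-\varepsilon]$ and $[0,\delta]\times[\mu_2^*+\varepsilon,\infty]$, on which the three candidate limits $\Lambda_{\max}$, $\{f(\mu_2)\}$, and $\eta_1^*$ all stay strictly on the desired side of $0$, yielding the required $\pm m^*$. I expect the main technical hurdle to be the joint continuity of the spectral bound $\lambda_p(\mu\circ\mathcal K+\mathcal A)$ on $(0,\infty)^2$ (used in the interior-$\bar\mu_2$ case), since $\lambda_p$ is only the principal spectrum point and not necessarily an eigenvalue; this must be established by combining Theorem~\ref{TH4} (the generalized-eigenvalue characterization) with the operator-norm continuity of $\mu\mapsto \mu\circ\mathcal K+\mathcal A$, rather than appealing to any single off-the-shelf perturbation result.
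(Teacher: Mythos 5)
Your proposal is correct and follows essentially the same case-split/contradiction strategy that the paper uses: extract a subsequence with $\mu_1^n\to 0$ and $\mu_2^n\to\bar\mu_2\in[0,\infty]$, then invoke Theorem~\ref{Th2-1}(i), Theorem~\ref{Th2-3}(i), or Theorem~\ref{Th2-5}(i) according to whether $\bar\mu_2$ is $0$, interior, or $\infty$. The one genuine (but mild) difference is organizational. You front-load the structural facts about $f(\mu_2):=\lambda_{\mu_2}^{a+s,e}$ — that it is non-increasing, that $f(0^+)=\Lambda_{\max}$, $f(\infty)=\eta_1^*$, and that $\Lambda_{\max}\ge\eta_1^*$ — and then read off all the sign conclusions at once. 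The paper instead derives the needed sign inequality inside each case by elementary algebra: for instance, in Case 2 of part (i) it shows $\Lambda_{\max}\le 0$ forces $rs\le(a+s)e$ pointwise and hence $\int_\Omega\bigl(\tfrac{rs}{\eta+a+s}-(\eta+e)\bigr)<0$ for all $\eta>0$, which is incompatible with $\eta_1^*>0$; and in Case 3 it notes $\tfrac{1}{|\Omega|}\int_\Omega\bigl(\tfrac{rs}{\eta_1^*+a+s}-(\eta_1^*+e)\bigr)=0\le\lambda_p\bigl(\mu_2\mathcal K+(\cdot)\mathcal I\bigr)$, so that $\lambda_{\mu_2}^{a+s,e}\ge\eta_1^*$ by the strict $\nu$-monotonicity from Proposition~\ref{Main-prop1}. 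These are exactly your Facts 2–3 in disguise; the two presentations are equivalent and neither is substantially shorter. You are also right to flag the joint-limit issue when both $\mu_1\to 0$ and $\mu_2\to\bar\mu_2$ vary; the paper invokes Theorem~\ref{Th2-3}(i) without comment. This is resolvable by noting that once $\varepsilon>0$ is fixed, the test function $\varphi^\pm_\varepsilon$ constructed in the proof of Theorem~\ref{Th2-3}(i) for $\mu_2=\bar\mu_2$ can absorb a small perturbation in $\mu_2$ just as it absorbs the $\mu_1$-perturbation (both enter as a bounded multiple of $\mathcal K\varphi$ divided by $\varphi_{\min}$), so one does not actually need a general operator-norm continuity theorem for the spectral bound. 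Finally, a small caveat in your Fact 3: if $\eta_1^*=-(a+s)_{\min}$ the spatial average does not literally vanish at $\nu=\eta_1^*$, but the desired inequality $\Lambda_{\max}\ge -(a+s)_{\min}$ still holds directly from \eqref{HK8} since $\Lambda(x)\ge -\min\{e(x),a(x)+s(x)\}$, so the conclusion survives.
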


\begin{remark}Theorem \ref{Th2-6} gives the sign of the principal spectrum point when the juvenile's dispersal rate is small. Similar result hold when the adult's dispersal rate is small. For simplicity, suppose that $ e_{\min} >0$ and $(a+s)_{\min}>0$. Following the theory of the basic reproduction number for population models or infectious disease models \cite{DH2000, DHM1990, VW2002}, it can be shown using the next generation matrix techniques that the quantity 
\begin{equation}\label{R-0}
   \mathcal{R}_0(x)=\frac{r(x)s(x)}{(a(x)+s(x))e(x)}\quad \forall\ x\in\overline{\Omega},
\end{equation}
is the {\it local basic reproduction}  function for system \eqref{model}. It tells us about the local persistence of the population in the sense that if $\mathcal{R}_0(x)>1$ at some location $x\in\Omega$, then the species can persist in such location in the absence of dispersal.  However, the species dies out if not dispersing in the locations where  $\mathcal{R}_0(x)<1$. It is clear from the $\mathcal{R}_0$ expression given in \eqref{R-0} that when $\min_{x\in\overline{\Omega}}\mathcal{R}_0(x)>1$, then $\int_{\Omega}\Big(\frac{rs}{a+s}-e\Big)>0$ and $ \int_{\Omega}\Big(\frac{rs}{e}-(a+s)\Big)>0$. In this case, it follows from Theorem \ref{Th2-6} that there is $\eta^*>0$ such for any choice of dispersal rate $\mu=(\mu_1,\mu_2)$ satisfying $\min\{\mu_1,\mu_2\}<\eta^*$, the species always persists.   As a result, if the species' local reproduction function is bigger than one, the population persists as long  as one of the subgroups moves slowly. 
\end{remark}

\subsubsection{Asymptotic profiles of steady states}

 For every $x\in\overline{\Omega}$, let ${\bf V}(x)=(V_1(x),V_2(x))$ denote  the unique nonnegative stable solution of the system of algebraic equations

\begin{equation}\label{kin}
    \begin{cases}
        0=r(x)V_1(x)-(a(x)+s(x)+b(x)V_1(x)+c(x)V_2(x))V_1(x) \cr 
        0=s(x)V_1(x)-(e(x)+f(x)V_2(x)+g(x)V_1(x))V_2(x).
    \end{cases}
\end{equation}

It follows from \cite[Theorem 2]{OSUU2023} that ${\bf V}(x)$ is positive if and only if $\Lambda(x)>0$. Thanks to \cite[Theorem 1]{OSUU2023}, system \eqref{model} has no positive steady-state solution if $\lambda_p(\mu\circ\mathcal{K}+\mathcal{A})\le 0$. Thus, by Theorem \ref{Th2-1}, to ensure that \eqref{model} has a positive steady state solution for small dispersal rates $\mu$, it is necessary to assume that $\Lambda_{\max}>0$. The following result demonstrates the asymptotic profiles of positive steady-state solutions of \eqref{model} for small dispersal rates $\mu$.

\begin{theorem}\label{TH6}
Suppose that $\Lambda_{\max}>0$. Then every positive steady state solution ${\bf u}^{\mu}$ of \eqref{model} for small dispersal rate $\mu$ satisfies ${\bf u}^{\mu}\to {\bf V} $ as $\max\{ \mu_1,\mu_2\}\to  0$, for $x$ uniformly in $\Omega$.
\end{theorem}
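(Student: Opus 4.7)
The plan is to fix any sequence $\mu=(\mu_1,\mu_2)\to(0,0)$ with associated positive steady states ${\bf u}^\mu$ of \eqref{model} and prove ${\bf u}^\mu\to {\bf V}$ uniformly on $\overline{\Omega}$. The first step is a uniform a priori bound: evaluating the steady-state equations at maximum points of $u_1^\mu$ and $u_2^\mu$ and using that $\mathcal{K}u_i^\mu\le 0$ at such points yields
$$r\,u_2^\mu \ge (a+s+bu_1^\mu+cu_2^\mu)u_1^\mu, \qquad s\,u_1^\mu \ge (e+fu_2^\mu+gu_1^\mu)u_2^\mu$$
at the respective maxima. The strict positivity of $b_{\min}$ and $f_{\min}$, combined with the uniform bounds on the remaining coefficients, then forces $\|u_i^\mu\|_\infty\le M$ for a constant $M$ independent of $\mu$.

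Next, I would rewrite the steady-state problem as
$$g_i(x,{\bf u}^\mu(x))=-\mu_i\,\mathcal{K}u_i^\mu(x),\qquad i=1,2,$$
where $g_1,g_2$ are the local kinetic reaction terms whose common zeros define \eqref{kin}. Boundedness of $\mathcal{K}$ on $X$ together with the a priori bound gives $\|g_i(\cdot,{\bf u}^\mu)\|_\infty\le C\mu_i\to 0$; hence ${\bf u}^\mu(x)$ is a uniformly $o(1)$-approximate nonnegative root of the pointwise algebraic system $g(x,\cdot)=0$. By \cite[Theorem 2]{OSUU2023}, the nonnegative zeros of this system at $x$ are $\{(0,0)\}$ if $\Lambda(x)\le 0$ and $\{(0,0),{\bf V}(x)\}$ if $\Lambda(x)>0$, with ${\bf V}(x)$ the unique linearly stable branch and ${\bf V}(\cdot)$ continuous on $\overline{\Omega}$. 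Hence any pointwise subsequential limit of ${\bf u}^\mu(x)$ lies in this finite set; at points where $\Lambda(x)\le 0$ this automatically yields ${\bf u}^\mu(x)\to 0={\bf V}(x)$.

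The main obstacle is excluding accumulation at $(0,0)$ at points where $\Lambda(x)>0$. To handle this I would invoke Theorem \ref{Th2-1}(i) to obtain $\lambda_p(\mu\circ\mathcal{K}+\mathcal{A})\to \Lambda_{\max}>0$, so $\lambda_p(\mu\circ\mathcal{K}+\mathcal{A})\ge \Lambda_{\max}/2$ for small $\mu$. Theorem \ref{TH4} then furnishes $\varphi^\mu\in X^{++}\times X^{++}$ with $(\mu\circ\mathcal{K}+\mathcal{A})\varphi^\mu\ge_1 (\Lambda_{\max}/2)\varphi^\mu$. After suitable normalization, a sufficiently small multiple $\varepsilon\varphi^\mu$ (with $\varepsilon>0$ independent of $\mu$) becomes a strict subsolution of the nonlinear steady-state system, since the negative quadratic self-limitation terms are absorbed by the positive spectral gap. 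The cooperative comparison principle for \eqref{model} then gives ${\bf u}^\mu\ge_1 \varepsilon\varphi^\mu>0$, ruling out trivial accumulation wherever $\varphi^\mu$ is bounded away from zero. Together with the kinetic reduction this yields pointwise convergence ${\bf u}^\mu(x)\to {\bf V}(x)$, and the uniformity of the $o(1)$ kinetic error together with continuity of ${\bf V}$ upgrades it to uniform convergence on $\overline{\Omega}$ by a covering argument that exploits the isolation gap between $(0,0)$ and ${\bf V}(x)$ on compact subsets of $\{\Lambda>0\}$.

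The hardest part will be the uniform-in-$\mu$ control of the subsolution $\varepsilon\varphi^\mu$: as $\mu\to 0$, the approximate eigenfunctions of $\mu\circ\mathcal{K}+\mathcal{A}$ may concentrate on the set where $\Lambda$ is near $\Lambda_{\max}$, and securing a suitable lower bound that survives the limit is significantly more subtle than in the local-diffusion analog of \cite{CCS}, precisely because the nonlocal semiflow lacks the compactness enjoyed by the Laplacian. This difficulty, together with the delicate uniform-convergence upgrade near the set $\{\Lambda=0\}$ where ${\bf V}$ transitions continuously into zero, is where the bulk of the technical work lies.
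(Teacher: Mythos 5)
Your first two steps are sound and in the spirit of the paper: a uniform a priori bound (you get it by evaluating at maxima, the paper instead compares with the solution of the cooperative subsystem \eqref{BH2}, but both work) and the observation that $g_i(x,\mathbf{u}^\mu(x))=-\mu_i\mathcal{K}u_i^\mu(x)=O(\mu_i)\to 0$ uniformly, so any subsequential limit of $\mathbf{u}^\mu(x)$ is a nonnegative root of the kinetic system \eqref{kin}. The problem is your step for excluding accumulation at $(0,0)$ where $\Lambda(x)>0$. You yourself flag that the subsolution $\varepsilon\varphi^\mu$ obtained from Theorem \ref{TH4} need not have a uniform-in-$\mu$ positive lower bound as $\mu\to 0$ — the near-eigenfunctions can concentrate near $\arg\max\Lambda$. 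Without such a lower bound, the comparison $\mathbf{u}^\mu\ge_1\varepsilon\varphi^\mu$ gives no information at points away from the concentration set, so the argument does not close: this is a genuine gap in the proposal, not just a technical annoyance.

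The paper avoids this entirely with a simple \emph{pointwise} algebraic inequality (its Step 2): from the steady-state equations one reads off
\[
A(x)\mathbf{u}^\mu(x)\le_1 \max\{u_1^\mu(x),u_2^\mu(x)\}\bigl(b(x)+f(x)+c(x)+g(x)+\mu_1K(x)+\mu_2K(x)\bigr)\,\mathbf{u}^\mu(x),
\]
and since $\mathbf{u}^\mu(x)>0$ is a positive test vector for the cooperative matrix $A(x)$, the Perron--Frobenius characterization of $\Lambda(x)$ (Theorem \ref{TH4} with $\mu=0$, applied to the fixed point $x$) yields
\[
\Lambda(x)\le \max\{u_1^\mu(x),u_2^\mu(x)\}\bigl(b(x)+f(x)+c(x)+g(x)+\mu_1K(x)+\mu_2K(x)\bigr).
\]
This directly forces any subsequential limit $\mathbf{u}^\infty$ at a point with $\Lambda(x_\infty)>0$ to be nonzero, with no need to track eigenfunctions of $\mu\circ\mathcal{K}+\mathcal{A}$ as $\mu\to 0$. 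The paper also formulates the whole argument as a contradiction over sequences $(\mu^n,x_n)$, which dissolves your worry about uniformity near $\{\Lambda=0\}$: if $x_n\to x_\infty$ with $\Lambda(x_\infty)\le 0$, then \cite[Theorem 2]{OSUU2023} forces both $\mathbf{u}^\infty=\mathbf{0}$ and $\mathbf{V}(x_\infty)=\mathbf{0}$, giving the contradiction immediately. (In a pointwise formulation the same fact shows there is no actual difficulty near $\{\Lambda=0\}$ either, since $\mathbf{V}$ itself is small there.) You should replace your subsolution/comparison step with this pointwise inequality.
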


 Theorem \ref{TH6} shows that spatial distribution of the steady-state solutions of \eqref{model} for small dispersal rates of the population is completely determined by the kinetic system  \eqref{kin}.  Our next result concerns the asymptotic profiles of positive steady-state solutions of \eqref{model} for large dispersal rates, $\mu$.

\begin{theorem}\label{TH7}
Suppose that $\tilde{\Lambda}>0$. Then every positive steady state solution ${\bf u}^{\mu}$ of \eqref{model} for large dispersal rates $\mu$ satisfies ${\bf u}^{\mu}\to \tilde{\bf V} $ as $\min\{\mu_1,\mu_2\}\to {\infty}$, for $x$ uniformly in $\Omega$, where $\tilde{\bf V}$ is the unique positive solution of 
\begin{equation}\label{TH7-eq1}
    \begin{cases}
        0=\hat{r}\cdot\tilde{V}_1-(\hat{a}+\hat{s}+\hat{b}\cdot\tilde{V}_1+\hat{c}\cdot\tilde{V}_2)\tilde{V}_1\cr 
        0=\hat{s}\cdot\tilde{V}_1-(\hat{e}+\hat{f}\cdot\tilde{V}_2+\hat{g}\cdot\tilde{V}_1)\tilde{V}_2.
    \end{cases}
\end{equation}
    
\end{theorem}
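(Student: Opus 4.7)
The plan breaks into four steps: (1) establish $\mu$-independent $L^\infty$ bounds on any positive steady state ${\bf u}^\mu$; (2) prove spatial homogenization, $\|u_i^\mu-\hat{u}_i^\mu\|_\infty\to 0$; (3) pass to the limit in the spatially averaged steady-state equations to obtain \eqref{TH7-eq1}; (4) rule out the degenerate limit using $\tilde\Lambda>0$.

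For (1), the strict positivity of $b$ and $f$ in (H2) supplies large constant supersolutions of \eqref{model}, giving a bound $\|{\bf u}^\mu\|_\infty\le C_0$ independent of $\mu$ by comparison for cooperative nonlocal systems. For (2), rewrite the steady-state form of \eqref{model} as $\mu_i\mathcal{K}(u_i^\mu)=F_i(x,{\bf u}^\mu)$ with $\|F_i\|_\infty$ controlled by Step (1), so $\|\mathcal{K}(u_i^\mu)\|_\infty=O(1/\mu_i)$. Setting $v_i^\mu:=u_i^\mu-\hat{u}_i^\mu$, the Dirichlet-form identity
\begin{equation}\notag
-\int_\Omega\mathcal{K}(v_i^\mu)\,v_i^\mu\,dx=\tfrac12\int_\Omega\!\!\int_\Omega\kappa(x,y)\bigl(v_i^\mu(y)-v_i^\mu(x)\bigr)^2\,dy\,dx
\end{equation}
together with $\kappa\ge\kappa_0>0$ and $\int_\Omega v_i^\mu\,dx=0$ yields $\|v_i^\mu\|_{L^2(\Omega)}=O(1/\mu_i)$. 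Plugging this back into the pointwise identity $K(x)v_i^\mu(x)=\int_\Omega\kappa(x,y)v_i^\mu(y)\,dy+O(1/\mu_i)$, where $K(x):=\int_\Omega\kappa(x,y)\,dy\ge\kappa_0|\Omega|$, and applying Cauchy--Schwarz upgrades the estimate to $\|u_i^\mu-\hat{u}_i^\mu\|_\infty\to 0$ as $\mu_i\to\infty$.

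For (3), integrating both steady-state equations over $\Omega$ kills the nonlocal terms by symmetry of $\kappa$. Along any subsequence $\mu^{(n)}$ with $\hat{u}_i^{\mu^{(n)}}\to c_i\ge 0$, Step (2) allows me to replace each $u_i^{\mu^{(n)}}$ by the constant $c_i$ inside every integral against a bounded coefficient, so the limit pair $(c_1,c_2)$ satisfies \eqref{TH7-eq1}. By \cite[Theorem 2]{OSUU2023} applied to this constant-coefficient algebraic system, when $\tilde\Lambda>0$ the only nonnegative solutions are $(0,0)$ and the unique positive $\tilde{\bf V}$.

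I expect (4) to be the main obstacle. By Theorem \ref{Th2-1}(ii), $\lambda_p(\mu\circ\mathcal{K}+\mathcal{A})\to\tilde\Lambda>0$, so for large $\mu$ this principal spectrum point sits above the essential spectrum of $\mu\circ\mathcal{K}+\mathcal{A}$ and is therefore a genuine isolated principal eigenvalue admitting a strictly positive eigenfunction $\varphi^\mu\in X^{++}\times X^{++}$; standard perturbation theory around the constant-coefficient limit $\mu\circ\mathcal{K}+\hat A$ shows that the normalized $\varphi^\mu$ converges uniformly, as $\min\{\mu_1,\mu_2\}\to\infty$, to the positive Perron eigenvector of $\hat A$. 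A direct check using (H2) then shows that for $\epsilon>0$ small enough (depending only on $\tilde\Lambda$ and $b_{\max},c_{\max},f_{\max},g_{\max}$), $\epsilon\varphi^\mu$ is a sub-solution of the steady-state form of \eqref{model}; the cooperative comparison / monotone-iteration theory from \cite{OSUU2023} yields ${\bf u}^\mu\ge_1\epsilon\varphi^\mu$, so $\min_\Omega u_i^\mu$ is bounded below uniformly in large $\mu$. This excludes $(c_1,c_2)=(0,0)$ and forces $(c_1,c_2)=\tilde{\bf V}$. Since every subsequence of ${\bf u}^\mu$ admits a further subsequence with the same limit $\tilde{\bf V}$, the full family converges uniformly on $\overline\Omega$ to $\tilde{\bf V}$.
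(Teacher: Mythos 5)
Your Steps (1)--(3) match the paper's proof almost verbatim: the constant supersolution bound $\|u_i^{\mu}\|_{\infty}\le\max\{r_{\max}/b_{\min},s_{\max}/f_{\min}\}$, the Dirichlet-form estimate on $v_i^{\mu}=u_i^{\mu}-\hat u_i^{\mu}$ followed by the pointwise identity $K(x)v_i^{\mu}(x)=\int_{\Omega}\kappa(x,y)v_i^{\mu}(y)\,dy+O(1/\mu_i)$ to upgrade $L^2$ decay to uniform decay, and the integration of the steady-state equations to identify any subsequential limit $(c_1,c_2)$ as a nonnegative solution of \eqref{TH7-eq1}. The difference, and the problem, is Step (4).

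The claim that $\epsilon\varphi^{\mu}$ is a subsolution and that ``cooperative comparison / monotone-iteration theory'' then yields ${\bf u}^{\mu}\ge_1\epsilon\varphi^{\mu}$ is a genuine gap. The full steady-state system \eqref{model} is \emph{not} cooperative: the interspecific terms $-c(x)u_2u_1$ and $-g(x)u_1u_2$ make each equation decreasing in the other component (only the linearization $\mathcal{A}$ is cooperative), so the order-preserving machinery you invoke does not apply to \eqref{model} itself. Moreover, even in a genuinely cooperative setting, a subsolution does not automatically lie below an \emph{arbitrary} positive steady state; one needs uniqueness/global stability or a sliding argument, and uniqueness of positive steady states of \eqref{model} is not available here (the paper explicitly leaves such questions open). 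A secondary, fixable issue: you assert uniform convergence of the normalized principal eigenfunction $\varphi^{\mu}$ to the Perron vector of $\hat A$, whereas the paper's proof of Theorem \ref{Th2-1}(ii) only gives $L^2$ convergence; an extra bootstrapping step would be needed. The paper sidesteps all of this with a short argument you should adopt: since ${\bf u}^{\mu}\in X^{++}\times X^{++}$ and
\begin{equation*}
\max\{\|u_1^{\mu}\|_{\infty},\|u_2^{\mu}\|_{\infty}\}\,(\|b\|_{\infty}+\|e\|_{\infty}+\|c\|_{\infty}+\|g\|_{\infty})\,{\bf u}^{\mu}\;\ge_1\;(\mu\circ\mathcal{K}+\mathcal{A}){\bf u}^{\mu},
\end{equation*}
the characterization $\lambda_p=\lambda^*$ of Theorem \ref{TH4}, with the steady state itself as test function, gives $\lambda_p(\mu\circ\mathcal{K}+\mathcal{A})\le \max_i\|u_i^{\mu}\|_{\infty}\,(\|b\|_{\infty}+\|e\|_{\infty}+\|c\|_{\infty}+\|g\|_{\infty})$; combined with $\lambda_p(\mu\circ\mathcal{K}+\mathcal{A})\to\tilde\Lambda>0$ from Theorem \ref{Th2-1}(ii), this forces the limit ${\bf Q}\ne{\bf 0}$, and then uniqueness of the positive solution of \eqref{TH7-eq1} concludes. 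With your Step (4) replaced by this estimate, the rest of your argument is sound.
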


 Theorem \ref{TH7} shows that the steady-state solutions of \eqref{model} are uniformly distributed on the whole habitat when  the dispersal rates of the population are significantly large. Moreover, their spatial distribution is determined by that of the kinetic model for which the parameters are replaced by their averages. The next result discusses the asymptotic profiles of positive steady states when $\mu_1$ approaches zero for every fixed $\mu_2$. For convenience, we introduce the function

\begin{equation}\label{H-eq}
    H(x,\tau)=\frac{1}{2b(x)}\Big(\sqrt{(a(x)+s(x)+c(x)\tau)^2+4b(x)r(x)\tau}-(a(x)+s(x)+c(x)\tau)\Big)\quad \forall\ \tau\ge 0, \ x\in\overline{\Omega}.
\end{equation}
The following result holds.

\begin{theorem}\label{TH8}  Fix $\mu_2>0$ and suppose that $(a+s)_{\min}>0$.  Suppose also that $\lambda_{\mu_2}^{a+s,e}>0$, where $\lambda_{\mu_2}^{a+s,e}$ is given by Theorem \ref{Th2-3}-{\rm (i)}. Then any positive steady-state solution ${\bf u}^{\mu}$ of \eqref{model} for small values of $\mu_1$ satisfies ${\bf u}^{\mu}\to (H(\cdot,w^*),w^*)$ as $\mu_1\to0$ uniformly in $\Omega$ where $H$ is defined by \eqref{H-eq} and $w^*\in X^{++}$ is the unique positive solution of 
\begin{equation}\label{TH8-eq}
        0=\mu_2\mathcal{K}w^*+sH(\cdot,w^*)-(e+fw^*+gH(\cdot,w^*))w^* \quad x\in\Omega.
\end{equation}
    
\end{theorem}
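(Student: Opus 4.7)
My plan for Theorem \ref{TH8} is to combine a uniform $L^\infty$ bound on $\mathbf{u}^\mu=(u_1^\mu,u_2^\mu)$, an algebraic slaving of $u_1^\mu$ to $u_2^\mu$ that becomes exact as $\mu_1\to 0$, and a compactness plus uniqueness argument for the resulting nonlocal scalar limit equation in $w^\mu:=u_2^\mu$. First I would establish the uniform bound: evaluating each steady-state equation at an interior maximum of its unknown and using $b,f>0$ yields $\|u_1^\mu\|_\infty^2\le r_{\max}\|u_2^\mu\|_\infty/b_{\min}$ and $\|u_2^\mu\|_\infty^2\le s_{\max}\|u_1^\mu\|_\infty/f_{\min}$; iterating gives $\|u_i^\mu\|_\infty\le C$ uniformly in small $\mu_1$.

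For the slaving, I set $F(x,\sigma,\tau):=(a+s+b\sigma+c\tau)\sigma-r\tau$, whose unique nonnegative root in $\sigma$ is precisely $H(x,\tau)$. The first steady-state equation reads $F(x,u_1^\mu,u_2^\mu)=\mu_1\mathcal{K}u_1^\mu$, and since $\partial_\sigma F\ge(a+s)_{\min}>0$ for $\sigma\ge 0$, the mean value theorem gives
\[
\|u_1^\mu-H(\cdot,u_2^\mu)\|_\infty\le\frac{\mu_1\|\mathcal{K}u_1^\mu\|_\infty}{(a+s)_{\min}}=O(\mu_1)\to 0.
\]
Plugging this into the second steady-state equation, a short algebraic manipulation yields $L(w^\mu)=-(u_1^\mu-H(\cdot,u_2^\mu))(s-gu_2^\mu)=O(\mu_1)$, where $L(w):=\mu_2\mathcal{K}w+sH(\cdot,w)-(e+fw+gH(\cdot,w))w$. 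To extract a uniform subsequential limit of $\{w^\mu\}$, I would exploit the compactness on $X$ of $Kv(x):=\int_\Omega\kappa(x,y)v(y)\,dy$ and rewrite the second steady-state equation as $w^\mu(x)M^\mu(x)=\mu_2(Kw^\mu)(x)+s(x)u_1^\mu(x)$, where $M^\mu(x):=\mu_2 k(x)+e(x)+f(x)w^\mu(x)+g(x)u_1^\mu(x)\ge\mu_2 k_{\min}>0$ with $k(x):=\int_\Omega\kappa(x,y)\,dy$; combining this with the slaving estimate yields uniform equicontinuity of $\{w^\mu\}$, so Arzel\`a--Ascoli produces a subsequential uniform limit $w\ge 0$ with $L(w)=0$.

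To rule out $w\equiv 0$, I would argue by contradiction using $\lambda_{\mu_2}^{a+s,e}>0$: by Theorem \ref{Th2-3}-{\rm (i)} this gives $\lambda_p(\mu\circ\mathcal{K}+\mathcal{A})\ge\delta>0$ for all sufficiently small $\mu_1$; Theorem \ref{TH4} then produces $\varphi\in X^{++}\times X^{++}$ with $(\mu\circ\mathcal{K}+\mathcal{A})\varphi\ge(\delta/2)\varphi$, making $\epsilon\varphi$ a strict subsolution of \eqref{model} for small $\epsilon>0$; monotone comparison for the cooperative semiflow forces $\mathbf{u}^\mu\ge\epsilon\varphi$, contradicting $\mathbf{u}^\mu\to 0$. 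For uniqueness of the positive solution $w^*$ of $L(w)=0$: since $H$ satisfies $bH^2+(a+s+c\tau)H=r\tau$, the map $\tau\mapsto H(x,\tau)/\tau$ is strictly decreasing, hence $w\mapsto L(w)/w$ is strictly decreasing pointwise, and a standard sliding argument comparing two positive solutions $w_1,w_2$ at the first $\tau_*>0$ with $\tau_*w_1\ge w_2$ forces them to coincide. Together these give $w^\mu\to w^*$ along the full family, and $u_1^\mu\to H(\cdot,w^*)$ via the slaving estimate.

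The hard parts will be the equicontinuity step for $\{w^\mu\}$ and the exclusion of the trivial limit $w\equiv 0$. The former requires carefully coupling the slaving estimate with the compactness of $K$ to close a Gronwall-type bound on $|w^\mu(x_1)-w^\mu(x_2)|$; the latter rests critically on the hypothesis $\lambda_{\mu_2}^{a+s,e}>0$, interpreted via Theorem \ref{Th2-3} and Theorem \ref{TH4} as a uniform positive gap in the principal spectrum point of the linearization at zero, which is then converted into a uniform positive lower bound for $\mathbf{u}^\mu$ through a principal-eigenfunction subsolution.
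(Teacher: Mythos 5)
Your slaving estimate $\lVert u_1^\mu - H(\cdot,u_2^\mu)\rVert_\infty = O(\mu_1)$ via the mean value theorem and $\partial_\sigma F\ge (a+s)_{\min}$ is clean and correct, as is the identity $L(u_2^\mu) = -(u_1^\mu - H(\cdot,u_2^\mu))(s-gu_2^\mu)=O(\mu_1)$. The uniqueness of $w^*$ via subhomogeneity of the reaction term (the map $\tau\mapsto H(x,\tau)/\tau$ being strictly decreasing) is also the right idea. However, the Arzel\`a--Ascoli step has a genuine gap that you cannot close with the estimates you have, and it is not merely a technical wrinkle. Solving the second steady-state equation pointwise gives $w^\mu(x)$ as an increasing function of the equicontinuous quantity $(Kw^\mu)(x)$ and of $u_1^\mu(x)$, with Lipschitz constant in the second argument of order $\max\{g_{\max}M^*,s_{\max}\}/(\mu_2 K_{\min})$. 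But $u_1^\mu$ is slaved to $w^\mu$ through $H$, whose Lipschitz constant in $\tau$ is of order $r_{\max}/(a+s)_{\min}$. Substituting one into the other, closing the modulus-of-continuity estimate requires the product of these Lipschitz constants to be strictly less than one, roughly $s_{\max}r_{\max}<\mu_2 K_{\min}(a+s)_{\min}$ when $g=0$. This smallness condition is not guaranteed by the hypotheses; in fact $\lambda_{\mu_2}^{a+s,e}>0$ pushes in the opposite direction (it is a statement that $rs/((a+s)e)$ is large). So equicontinuity of $\{w^\mu\}$ is not available by this route, and the subsequential compactness on which the rest of your argument rests is unjustified.

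The paper avoids this entirely by sandwiching. Using the monotonicity of $F(x,\tau,\nu)$ in $\nu$ and the a priori bound $\lVert u_i^\mu\rVert_\infty\le M^*$, it replaces $u_1^\mu$ in the second equation by the deterministic bounds $H(x,u_2^\mu)$ from below and $F(x,u_2^\mu,\mu_1 M^*)$ from above, obtaining two \emph{scalar} nonlocal inequalities in $u_2^\mu$ alone. It then constructs the unique globally stable solutions $\underline{u}_2^\mu$ and $\overline{u}_2^\mu$ of the corresponding scalar nonlocal equations (using $\lambda_{\mu_2}^{a+s,e}>0$ and the strict decrease of the reaction term divided by $\tau$ for small $\mu_1$), shows they are monotone in $\mu_1$ and both converge to $w^*$, and squeezes $\underline{u}_2^\mu\le u_2^\mu\le\overline{u}_2^\mu$. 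This delivers uniform convergence of $u_2^\mu$ without any compactness argument and also gives a positive lower bound on $u_2^\mu$ for free, making your ``rule out $w\equiv 0$'' step unnecessary. (Your subsolution-sweeping argument for that step is conceptually reasonable, but as written it would still need uniform-in-$\mu_1$ control of the eigenfunction $\varphi$ and the subsolution constant $\epsilon$.) If you want to keep your slaving-plus-compactness outline, you would need an alternative compactness mechanism, e.g., linearized stability of $w^*$ and an implicit-function or topological-degree argument in $C(\overline\Omega)$, which would be a substantial addition.
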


 Note that similar result will hold if we fix $\mu_1$ and let $\mu_2 \to 0$.  We complete our results with the asymptotic profiles of positive steady-state solutions of \eqref{model} as one of the diffusion get arbitrarily large. In this direction, the following result holds.

\begin{theorem}\label{TH9}
   Fix $\mu_2>0$ and suppose that $e\in X^+\setminus\{0\}$. Suppose also that $\tilde{\lambda}^{a+s,e}_{r,\mu_2,s} >0$, where $\tilde{\lambda}^{a+s,e}_{r,\mu_2,s} $ is given by Theorem \ref{Th2-4}-{\rm(i)}. Then any positive steady-state solution ${\bf u}^{\mu}$ of \eqref{model} for large values of $\mu_1$ satisfies, up to a subsequence, ${\bf u}^{\mu}\to (l^*,\tilde{w}^*)$ as $\mu_1\to\infty$ uniformly in $\Omega$ where $l^*>0$,  $\tilde{w}^*\in X^{++}$ and satisfy  
\begin{equation}\label{TH9-eq}
       \begin{cases} 
       0=\int_{\Omega}r\tilde{w}^* -\Big(\int_{\Omega}(a+s)+l^*\int_{\Omega}b+\int_{\Omega}c\tilde{w}^*\Big)l^* \cr
       0=\mu_2\mathcal{K}\tilde{w}^*+sl^*-(e+f\tilde{w}^*+gl^*)\tilde{w}^* & x\in\Omega.
       \end{cases}
\end{equation}
    
\end{theorem}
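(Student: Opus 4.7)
The plan is to combine uniform a priori bounds with an energy-based homogenization argument showing that $u_1^{\mu}$ becomes asymptotically spatially constant as $\mu_1 \to \infty$, to identify the limit through an integrated version of the first equation and direct passage in the second, and finally to rule out the trivial limit using the hypothesis $\tilde{\lambda}^{a+s,e}_{r,\mu_2,s}>0$ via a rescaling/contradiction argument that invokes Proposition~\ref{Main-pop2}.

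First I would establish uniform bounds $\|u_1^{\mu}\|_{\infty}+\|u_2^{\mu}\|_{\infty}\le M$ independent of $\mu_1$. Evaluating the second steady-state equation at a maximum point of $u_2^{\mu}$ gives $f_{\min}\|u_2^{\mu}\|_{\infty}^{2}\le \|s\|_{\infty}\|u_1^{\mu}\|_{\infty}$, and evaluating the first at a maximum of $u_1^{\mu}$ yields $b_{\min}\|u_1^{\mu}\|_{\infty}^{2}\le \|r\|_{\infty}\|u_2^{\mu}\|_{\infty}$; combining these and using that $b,f$ are strictly positive by (H2) produces the claimed bound. Next, testing the first stationary equation against $u_1^{\mu}$ and using the self-adjoint identity
\begin{equation*}
-\int_{\Omega}u\,\mathcal{K}u\,dx=\tfrac{1}{2}\int_{\Omega}\!\!\int_{\Omega}\kappa(x,y)(u(y)-u(x))^{2}\,dy\,dx,
\end{equation*}
yields $\mu_{1}\int_{\Omega}\!\!\int_{\Omega} \kappa(u_1^{\mu}(y)-u_1^{\mu}(x))^{2}\,dy\,dx=O(1)$. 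Combined with the lower bound $\kappa\ge \kappa_{0}>0$ from (H1) and the elementary identity $\int_{\Omega}\!\!\int_{\Omega}(u(y)-u(x))^{2}\,dy\,dx=2|\Omega|\|u-\hat{u}\|_{2}^{2}$, this yields $\|u_1^{\mu}-\widehat{u_1^{\mu}}\|_{2}^{2}=O(1/\mu_{1})$. Rewriting the first equation pointwise as $(\mu_1 k_0(x)+O(1))u_1^{\mu}(x)=\mu_1\!\int\kappa(x,y)u_1^{\mu}(y)\,dy+O(1)$ with $k_{0}(x):=\int\kappa(x,y)\,dy>0$, and estimating the integral by Cauchy-Schwarz, upgrades the convergence to $u_1^{\mu}=\widehat{u_1^{\mu}}+O(\mu_1^{-1/2})$ uniformly on $\overline{\Omega}$.

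With these two steps in hand, the scalar sequence $\widehat{u_1^{\mu}}\in[0,M]$ admits a subsequential limit $l^{*}\in[0,M]$, and the family $\{u_2^{\mu}\}$ is equicontinuous because the nonlocal term $\int\kappa(\cdot,y)u_2^{\mu}(y)\,dy$ is equicontinuous by uniform continuity of $\kappa$ and the second equation expresses $u_2^{\mu}$ as a Lipschitz function of it and of $u_1^{\mu}$; Arzel\`a-Ascoli then yields $u_2^{\mu}\to\tilde{w}^{*}$ in $C(\overline{\Omega})$. Passing to the limit in the second stationary equation gives the second line of \eqref{TH9-eq}. Integrating the first equation over $\Omega$ annihilates the nonlocal term by symmetry of $\kappa$, leaving
\begin{equation*}
0=\int_{\Omega}r u_2^{\mu}-\int_{\Omega}(a+s)u_1^{\mu}-\int_{\Omega}b(u_1^{\mu})^{2}-\int_{\Omega}cu_1^{\mu}u_2^{\mu},
\end{equation*}
which passes to the first line of \eqref{TH9-eq}.

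The main obstacle is the non-degeneracy $l^{*}>0$; once this is established, $\tilde{w}^{*}\in X^{++}$ follows from the nonlocal strong maximum principle applied to the second line of \eqref{TH9-eq}, whose source $sl^{*}$ is nonnegative and nontrivial. Assume for contradiction that $\tau_{n}:=\|{\bf u}^{\mu_n}\|\to 0$ along a subsequence, and set $\tilde{\bf u}^{n}:={\bf u}^{\mu_n}/\tau_{n}$. This rescaled pair satisfies the same system with the quadratic terms multiplied by $\tau_{n}\to 0$; repeating the homogenization and compactness of the previous two paragraphs produces a nonnegative limit $(\tilde{l},\tilde{w})\ne (0,0)$, with $\tilde{w}\in C(\overline{\Omega})$, solving the linear system
\begin{equation*}
0=\int_{\Omega}r\tilde{w}-\tilde{l}\int_{\Omega}(a+s),\qquad 0=\mu_{2}\mathcal{K}\tilde{w}+s\tilde{l}-e\tilde{w}.
\end{equation*}
If $\tilde{l}=0$, then $\tilde{w}$ is a nontrivial nonnegative solution of $\mu_{2}\mathcal{K}\tilde{w}=e\tilde{w}$; the nonlocal strong maximum principle forces $\tilde{w}>0$ on $\overline{\Omega}$, so $\int_{\Omega}r\tilde{w}>0$, contradicting the first identity. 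If $\tilde{l}>0$, then in the regime $\lambda_{p}(\mu_{2}\mathcal{K}-e\mathcal{I})<0$ the positivity-preserving resolvent yields $\tilde{w}=\tilde{l}(e\mathcal{I}-\mu_{2}\mathcal{K})^{-1}(s)$, and substitution into the first identity reduces to equation \eqref{IL1} at $\nu=0$, forcing $\tilde{\lambda}^{a+s,e}_{r,\mu_{2},s}=0$ by the uniqueness in Proposition~\ref{Main-pop2}; in the complementary regime $\lambda_{p}(\mu_{2}\mathcal{K}-e\mathcal{I})\ge 0$, testing against a generalized positive subsolution supplied by Theorem~\ref{TH4} produces an inequality of the form $-\tilde{l}\int_{\Omega}s\phi \ge (\lambda_{p}(\mu_{2}\mathcal{K}-e\mathcal{I})-\varepsilon)\int_{\Omega}\phi\tilde{w}$ whose two sides have incompatible signs for $\varepsilon$ small. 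Either way the hypothesis $\tilde{\lambda}^{a+s,e}_{r,\mu_{2},s}>0$ is contradicted, completing the proof.
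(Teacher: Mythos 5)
Your proof is correct in its conclusions but takes a genuinely different route from the paper's in two key places, and a couple of steps deserve tightening.

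The paper obtains the uniform a priori bound by comparing any steady state with a constant supersolution of the cooperative system \eqref{BH2} (via the global attractor theory of \cite{OSUU2023}); you get it more elementarily by evaluating each equation at a maximum point and iterating the resulting inequalities, which requires nothing beyond strict positivity of $b$ and $f$. For the convergence of $u_2^{\mu}$, the paper identifies the limiting scalar equation $0=\mu_2\mathcal{K}\tilde w+\tilde F^*(\cdot,\tilde w)$, proves it has a unique positive solution, and invokes a perturbation argument; you instead observe that $u_2^{\mu}(x)$ solves a pointwise quadratic whose positive root is a uniformly Lipschitz function of the equicontinuous data $u_1^{\mu}$ and $\int\kappa(\cdot,y)u_2^{\mu}(y)\,dy$ (and of the H\"older coefficients), so Arzel\`a--Ascoli gives uniform subsequential convergence directly. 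The sharpest divergence is the non-degeneracy $l^*>0$: the paper combines the spectral inequality \eqref{BH7} from the proof of Theorem \ref{TH7}, the linear resolvent bound $0\le u_2^{\mu}\le\|\Psi_{\mu_2,e}(\cdot,0,\cdot)\|s_{\max}\|u_1^{\mu}\|_{\infty}$, and the convergence $\lambda_p(\mu^n\circ\mathcal{K}+\mathcal{A})\to\tilde\lambda^{a+s,e}_{r,\mu_2,s}>0$; you instead run a blow-up argument, rescale by $\|\mathbf{u}^{\mu_n}\|$, pass to the homogenized linear limit system, and extract a contradiction by evaluating $\sigma^{a+s,e}_{r,\mu_2,s}$ and appealing to Proposition \ref{Main-pop2}(ii). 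Both work; the paper's is shorter because it reuses machinery already in place, while yours is more self-contained and closer to standard elliptic blow-up arguments.

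Two small points to tidy. First, you set up the contradiction by assuming $\tau_n:=\|\mathbf{u}^{\mu_n}\|\to 0$, but what you actually need to rule out is $l^*=0$. The bridge is that if $l^*=0$ then the second line of \eqref{TH9-eq} reduces to $0=\mu_2\mathcal{K}\tilde w^*-(e+f\tilde w^*)\tilde w^*$, and since $\lambda_p(\mu_2\mathcal{K}-e\mathcal{I})<0$ (because $e\in X^+\setminus\{0\}$), testing against $\tilde w^*$ and using \eqref{Pl9-2} forces $\tilde w^*\equiv 0$; hence $l^*=0$ does imply $\|\mathbf{u}^{\mu_n}\|\to 0$. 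Make this explicit, since otherwise the blow-up only excludes one of the two ways a degenerate limit could occur. Second, the case ``$\lambda_p(\mu_2\mathcal{K}-e\mathcal{I})\ge 0$'' is vacuous here: the hypothesis $e\in X^+\setminus\{0\}$ together with the Rayleigh-quotient formula \eqref{Pl9-2} already yields $\lambda_p(\mu_2\mathcal{K}-e\mathcal{I})<0$, so the speculative subsolution argument for the complementary regime can be dropped; as written it is also the vaguest step of the proof.
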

 Note that similar result will hold if we fix $\mu_1$ and let $\mu_2 \to \infty$. Theorems \ref{TH8} and \ref{TH9} provide information on the spatial distribution of the steady state solution of system \ref{model} when one of the dispersal rate is fixed while the other is either very small or very large. The limiting equation \eqref{TH8-eq} satisfied by $w^*$, is uniquely solvable  for a positive solution. We expect that such information could be used to obtain some partial results on the uniqueness and (linear) stability of steady-state solutions of system \eqref{model} when one dispersal rate is sufficiently small.   However, due to the nonlocal terms in \eqref{TH9-eq}, we  could not establish the uniqueness of its solution. This highlights some of the difficulties associated with the questions of uniqueness and stability of steady-state solutions of \eqref{model}. We hope that the results established in the current study will be of great help in tackling these open questions.

\section{Preliminaries}
\label{Sec3}

\noindent In the current section, we collect some preliminary results needed for establishing our main results in the subsequent sections.  Recall that the linearization of \eqref{model} at the trivial solution was given by \eqref{linera-model}.  Let $\{{\bf U}^{\mu}(t)\}_{t\ge 0}$ denote the uniformly continuous semigroup generated by the bounded linear operator $\mu\circ\mathcal{K}+\mathcal{A}$ on ${X}\times{X}$. Hence, for every ${\bf U}_0\in X\times X$, ${\bf U}(t,{\bf U}_0):={\bf U}^{\mu}(t){\bf U}_0$ is the unique solution of \eqref{M-eq3}, equivalently \eqref{linera-model}, with initial data ${\bf U}_0$. Since $A(x)$ is cooperative for each $x\in \overline{\Omega}$, the uniformly continuous semigroup $\{{\bf U}^{\mu}(t)\}_{t\ge 0}$  is strongly positive, in the sense that 
 \begin{equation}\label{M-eq4}
 {\bf U}^{\mu}(t)(X^+\times X^+)\subset X^+\times X^+\quad\text{and}\quad {\bf U}^{\mu}(t)((X^+\times X^+)\setminus\{{\bf 0}\})\subset X^{++}\times X^{++}\quad  \forall\ t>0.
 \end{equation}

\noindent The following result shows that the  principal spectrum point  $ \lambda_p(\mu\circ\mathcal{K}+\mathcal{A})$ equals the exponential growth bound of the uniformly continuous semigroup $\{{\bf U}^{\mu}(t)\}_{t\ge 0}$. This result turns out to be of particular importance in the arguments used to prove some  of our main results.

 \begin{proposition}\label{Prop-1} 
$\lambda_p(\mu\circ\mathcal{K}+\mathcal{A})\in \sigma(\mu\circ\mathcal{K}+\mathcal{A})$ and 
\begin{equation}\label{M-eq6} \lambda_p(\mu\circ\mathcal{K}+\mathcal{A})= \lim_{t\to\infty}\frac{\ln(\|{\bf U}^{\mu}(t)\|)}{t}.
     \end{equation}
 
 \end{proposition}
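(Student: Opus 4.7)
The plan is to split the proposition into two claims and attack each with classical semigroup theory, leveraging two features of the setup: $\mathcal{L} := \mu\circ\mathcal{K}+\mathcal{A}$ is a \emph{bounded} linear operator on the Banach lattice $X\times X$ (with order unit $\mathbf{1}=(1,1)$), and the generated semigroup $\{{\bf U}^{\mu}(t)\}_{t\ge 0}=\{e^{t\mathcal{L}}\}_{t\ge 0}$ is \emph{positive} by \eqref{M-eq4}. Boundedness of $\mathcal{L}$ immediately gives that $\sigma(\mathcal{L})$ is a nonempty compact subset of $\mathbb{C}$, so $\lambda_p(\mathcal{L})\in\mathbb{R}$, and furnishes the full spectral mapping theorem $\sigma(e^{t\mathcal{L}})=e^{t\sigma(\mathcal{L})}$ for every $t\ge 0$ via the norm-convergent exponential series.

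For the growth-rate identity $\lambda_p(\mathcal{L})=\lim_{t\to\infty}t^{-1}\ln\|{\bf U}^{\mu}(t)\|$, I would combine spectral mapping with Gelfand's spectral-radius formula. On the one hand,
\[
r(e^{t\mathcal{L}})=\sup\{|e^{t\lambda}|:\lambda\in\sigma(\mathcal{L})\}=e^{t\lambda_p(\mathcal{L})};
\]
on the other, the subadditivity $\ln\|e^{(s+t)\mathcal{L}}\|\le\ln\|e^{s\mathcal{L}}\|+\ln\|e^{t\mathcal{L}}\|$ together with Fekete's lemma produce the limit $\omega_0:=\lim_{t\to\infty}t^{-1}\ln\|e^{t\mathcal{L}}\|$, and evaluating Gelfand along $t_n=n\tau$ gives $e^{\tau\omega_0}=r(e^{\tau\mathcal{L}})=e^{\tau\lambda_p(\mathcal{L})}$, whence $\omega_0=\lambda_p(\mathcal{L})$. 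This half of the proposition is essentially automatic once boundedness is in hand.

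The substantive step is to promote the statement ``$\lambda_p(\mathcal{L})=\mathcal{R}e(\lambda_0)$ for some $\lambda_0\in\sigma(\mathcal{L})$'' (which is all that compactness yields) to $\lambda_p(\mathcal{L})\in\sigma(\mathcal{L})$ itself. Here I would invoke positivity. For real $\lambda>\lambda_p(\mathcal{L})$, the Laplace-transform representation
\[
R(\lambda,\mathcal{L})=\int_0^{\infty}e^{-\lambda t}\,{\bf U}^{\mu}(t)\,dt
\]
converges in operator norm (by the growth bound just established) and defines a positive operator by \eqref{M-eq4}; since $\mathbf{1}$ is an order unit, $\|R(\lambda,\mathcal{L})\|=\|R(\lambda,\mathcal{L})\mathbf{1}\|$, and this quantity is monotone decreasing in $\lambda$. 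A Widder/Pringsheim-type argument for positive-kernel Laplace transforms then shows that the abscissa of convergence $\lambda_p(\mathcal{L})$ must itself be a singular point of $\lambda\mapsto R(\lambda,\mathcal{L})\mathbf{1}$: expanding about any real $\lambda_0>\lambda_p(\mathcal{L})$,
\[
R(\lambda,\mathcal{L})=\sum_{n\ge 0}(\lambda_0-\lambda)^n R(\lambda_0,\mathcal{L})^{n+1},
\]
all coefficients are positive operators, so by Pringsheim the radius of convergence of this series is exactly $\lambda_0-\lambda_p(\mathcal{L})$, which precludes analytic extension of the resolvent past $\lambda_p(\mathcal{L})$ on the real axis. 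This forces $\lambda_p(\mathcal{L})\in\sigma(\mathcal{L})$, and is the main obstacle of the proof: it is the only place where the Banach-lattice structure of $X\times X$ and the strong positivity \eqref{M-eq4} are genuinely used, rather than generic bounded-operator properties.
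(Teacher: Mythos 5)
The paper does not give an in-line argument for this proposition; its ``proof'' is a citation to \cite{OSUU2023}, Proposition A1, so you are supplying work the paper delegates. Your two-step plan is the standard self-contained route, and the first half is fully correct: since $\mathcal{L}=\mu\circ\mathcal{K}+\mathcal{A}$ is bounded, the entire-function spectral mapping theorem $\sigma(e^{t\mathcal{L}})=e^{t\sigma(\mathcal{L})}$ together with Gelfand's formula yields \eqref{M-eq6} with no further input, and you correctly isolate the positivity \eqref{M-eq4} as the only ingredient needed to upgrade ``some $\lambda_0\in\sigma(\mathcal{L})$ has $\mathcal{R}e(\lambda_0)=\lambda_p(\mathcal{L})$'' (which compactness alone gives) to $\lambda_p(\mathcal{L})\in\sigma(\mathcal{L})$.

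Your final step, however, is worded so that it asserts the conclusion rather than derives it. You say the radius of convergence of the Neumann series about a real $\lambda_0>\lambda_p(\mathcal{L})$ ``is exactly $\lambda_0-\lambda_p(\mathcal{L})$.'' But that radius equals $1/r(R(\lambda_0,\mathcal{L}))=\operatorname{dist}(\lambda_0,\sigma(\mathcal{L}))$, which is only known to be $\ge\lambda_0-\lambda_p(\mathcal{L})$, with equality \emph{if and only if} $\lambda_p(\mathcal{L})\in\sigma(\mathcal{L})$ --- precisely what you want to prove. Pringsheim applied to the positive coefficients tells you the left real endpoint of the disk of convergence is singular, but does not by itself bound the radius above by $\lambda_0-\lambda_p(\mathcal{L})$. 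What closes the argument is the Landau-type interchange that your opening sentence gestures at but the subsequent lines do not actually carry out: suppose the radius exceeded $\lambda_0-\lambda_p(\mathcal{L})$ and fix a real $\lambda_1<\lambda_p(\mathcal{L})$ inside the disk. Using $R(\lambda_0,\mathcal{L})^{n+1}\mathbf{1}=\tfrac{1}{n!}\int_0^{\infty}t^n e^{-\lambda_0 t}\,{\bf U}^{\mu}(t)\mathbf{1}\,dt$ and scalar Tonelli at each $x\in\overline{\Omega}$,
\[
\big(R(\lambda_1,\mathcal{L})\mathbf{1}\big)(x)=\sum_{n\ge0}\frac{(\lambda_0-\lambda_1)^n}{n!}\int_0^{\infty}t^{n}e^{-\lambda_0 t}\big({\bf U}^{\mu}(t)\mathbf{1}\big)(x)\,dt=\int_0^{\infty}e^{-\lambda_1 t}\big({\bf U}^{\mu}(t)\mathbf{1}\big)(x)\,dt;
\]
Dini's theorem (truncated integrals increase to the continuous limit $R(\lambda_1,\mathcal{L})\mathbf{1}$ on the compact $\overline{\Omega}$) upgrades this to convergence in $X\times X$, and your order-unit identity $\|{\bf U}^{\mu}(t)\|=\|{\bf U}^{\mu}(t)\mathbf{1}\|$ then forces $\int_0^{\infty}e^{-\lambda_1 t}\|{\bf U}^{\mu}(t)\|\,dt<\infty$, contradicting $\omega_0=\lambda_p(\mathcal{L})$. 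Alternatively, invoke Schaefer's theorem that a positive bounded operator on a Banach lattice has its spectral radius in its spectrum, applied to $R(\lambda_0,\mathcal{L})$, and pass through the resolvent spectral mapping. Either completion makes your proof correct.
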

 \begin{proof} See \cite[Proposition A1]{OSUU2023}. 
 \end{proof}
 
 \noindent Let $\{e^{t\mathcal{K}}\}_{t\ge0}$ denote the uniformly continuous semigroup of bounded linear operators generated by the bounded linear operator $\mathcal{K}$ on $X$. 
 This means that given any $u_0\in X$, $u(t,x;u_0):=(e^{t\mathcal{K}}u_0)(x)$ is the unique solution of 
\begin{equation}\label{D-Eq2}
    \begin{cases}
    \partial_tu=\mathcal{K}u(t,x) & x\in\bar{\Omega},\cr
    u(0,\cdot)=u_0(x) & x\in\bar{\Omega}.
    \end{cases}
\end{equation}
By the comparison principle for linear nonlocal operators,    $\{e^{t\mathcal{K}}\}_{t\ge 0}$ is strongly monotone, in the sense that  
\begin{equation}\label{sa3}
e^{t\mathcal{K}}u<e^{t\mathcal{K}}v\quad  \text{for all}\ t>0\ \text{whenever } \ u\le\not\equiv v, \ u,v\in   X. 
\end{equation}
Let 
\begin{equation}\label{D-Eq8}
    \beta_*:=\inf_{u\in L^2(\Omega)\setminus\{0\},\ \int_{\Omega}u=0}\frac{\frac{1}{2}\int_{\Omega}\int_{\Omega}\kappa(x,y)(u(x)-u(y))^2dydx}{\int_{\Omega}u^2(x)dx}.
\end{equation}

\noindent It follows from \cite{CCR2006} that $\beta_*$ is a positive number. 
With respect to the asymptotic behavior of solutions of \eqref{D-Eq2}, the following result holds. 

\begin{lemma}[\cite{CCR2006}]\label{D-1-lemma}
For every $u_0\in L^2(\Omega)$,  it holds that 
\begin{equation}\label{D-Eq9}
    \int_{\Omega}e^{t\mathcal{K}}u_0=\int_{\Omega}u_0\quad \forall\ t>0,
\end{equation}
hence $\{e^{t\mathcal{K}}\}_{t\ge 0}$ leaves invariant  the Banach space $\{u\in L^2(\Omega) : \int_{\Omega}u=0\}$. Furthermore, 
\begin{equation}\label{D-Eq3}
    \Big\|e^{t\mathcal{K}}u-\frac{1}{|\Omega|}\int_{\Omega}u\Big\|_{L^2(\Omega)}\le e^{-t\beta_{*} }\Big\|u-\frac{1}{|\Omega|}\int_{\Omega}u\Big\|_{L^2(\Omega)}\quad \forall\ t>0,\ u\in L^2(\Omega),
\end{equation}
and 
\begin{equation} \label{D-Eq4}
    \Big\|u-\frac{1}{|\Omega|}\int_{\Omega}u\Big\|_{L^2(\Omega)}^2\le \frac{1}{2\beta_*}\int_{\Omega}\int_{\Omega}\kappa(x,y)(u(x)-u(y))^2dydx\quad  u\in L^2(\Omega).
\end{equation}
\end{lemma}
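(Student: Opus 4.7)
The plan is to prove the three assertions in the order stated, using (i) the symmetry of $\kappa$ for the conservation identity, (ii) the variational characterization of $\beta_*$ for the Poincar\'e-type inequality, and (iii) a standard energy-dissipation argument for the exponential decay. Because $\mathcal{K}$ is bounded on $X$ and, by (H1) together with the Cauchy--Schwarz inequality on the compact set $\overline{\Omega}\times\overline{\Omega}$, also extends to a bounded operator on $L^2(\Omega)$, I first establish everything for $u_0\in X$ (where $u(t,x)=(e^{t\mathcal{K}}u_0)(x)$ solves \eqref{D-Eq2} classically) and then pass to general $u_0\in L^2(\Omega)$ by density.

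For \eqref{D-Eq9}, I would differentiate $t\mapsto\int_{\Omega}e^{t\mathcal{K}}u_0\,dx$ and obtain $\int_{\Omega}\!\int_{\Omega}\kappa(x,y)(u(t,y)-u(t,x))\,dy\,dx$. Fubini's theorem together with the symmetry $\kappa(x,y)=\kappa(y,x)$ in (H1) shows that after swapping the dummy variables $x$ and $y$ the integrand changes sign, so the double integral vanishes identically. Thus $t\mapsto\int_{\Omega}u(t,\cdot)$ is constant, and since $\mathcal{K}$ annihilates constants the mean-zero subspace of $L^2(\Omega)$ is invariant under the semigroup.

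The inequality \eqref{D-Eq4} is essentially the definition \eqref{D-Eq8} of $\beta_*$. Given $u\in L^2(\Omega)$, set $v:=u-\tfrac{1}{|\Omega|}\int_{\Omega}u$, so that $\int_{\Omega}v=0$ and $v(x)-v(y)=u(x)-u(y)$ for all $x,y$. If $v\not\equiv 0$, applying the infimum in \eqref{D-Eq8} to $v$ yields
$$\beta_*\int_{\Omega}v^2\,dx\le\tfrac{1}{2}\int_{\Omega}\!\int_{\Omega}\kappa(x,y)(u(x)-u(y))^2\,dy\,dx,$$
which is \eqref{D-Eq4}; the case $v\equiv 0$ is trivial. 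The only nontrivial input here is the strict positivity of $\beta_*$ on a bounded domain under (H1), which is the content of \cite{CCR2006} and is the one step I would simply cite rather than re-derive.

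For the exponential decay \eqref{D-Eq3}, let $v(t,x):=(e^{t\mathcal{K}}u)(x)-\tfrac{1}{|\Omega|}\int_{\Omega}u$. By the conservation identity already proved and the fact that $\mathcal{K}$ annihilates constants, $v$ solves $\partial_t v=\mathcal{K}v$ with $\int_{\Omega}v(t,\cdot)=0$ for every $t\ge 0$. Differentiating the squared $L^2$-norm and symmetrizing the resulting double integral by means of $\kappa(x,y)=\kappa(y,x)$ gives
$$\frac{d}{dt}\|v(t,\cdot)\|_{L^2(\Omega)}^2=2\int_{\Omega}v\,\mathcal{K}v\,dx=-\int_{\Omega}\!\int_{\Omega}\kappa(x,y)(v(t,x)-v(t,y))^2\,dy\,dx.$$
Since $v(t,\cdot)$ has mean zero, \eqref{D-Eq4} bounds the right-hand side from above by $-2\beta_*\|v(t,\cdot)\|_{L^2(\Omega)}^2$, and Gronwall's inequality delivers \eqref{D-Eq3}. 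The real obstacle in the whole lemma is the positivity of $\beta_*$, the nonlocal Poincar\'e inequality established in \cite{CCR2006}; once it is granted, the remaining steps are routine calculus relying only on the symmetry of the kernel and the invariance of the mean.
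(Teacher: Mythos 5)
Your proof is correct. Note that the paper does not actually prove this lemma: it is stated with the citation \cite{CCR2006}, and the only ingredient the paper takes from that reference in the surrounding text is the positivity of $\beta_*$ defined in \eqref{D-Eq8} — exactly the step you also cite rather than re-derive. Your argument supplies the rest in a self-contained way, and it is the standard route: conservation of mass from the symmetry of $\kappa$ via Fubini and swapping dummy variables, the Poincar\'e-type bound \eqref{D-Eq4} read off directly from the variational definition of $\beta_*$ applied to $v=u-\frac{1}{|\Omega|}\int_{\Omega}u$ (using that $v(x)-v(y)=u(x)-u(y)$), and the decay \eqref{D-Eq3} from the energy identity $\frac{d}{dt}\|v(t)\|_{L^2}^2=-\int_{\Omega}\int_{\Omega}\kappa(x,y)(v(t,x)-v(t,y))^2\,dy\,dx\le -2\beta_*\|v(t)\|_{L^2}^2$ plus Gronwall. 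Two small remarks, neither a gap: since $\kappa$ is continuous on the compact set $\overline{\Omega}\times\overline{\Omega}$, $\mathcal{K}$ is bounded on $L^2(\Omega)$, so the semigroup is uniformly continuous there and $t\mapsto v(t)$ is $C^1$ in $L^2$; this justifies the differentiations directly for all $u_0\in L^2(\Omega)$, making your density step from $X$ harmless but unnecessary. Also, in the energy step you need \eqref{D-Eq4} applied to $v(t,\cdot)$, which is legitimate precisely because you have already shown $\int_{\Omega}v(t,\cdot)=0$ for all $t$, so the mean-zero reduction is trivial there; you state this, so the logic is complete.
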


\noindent For convenience, we define 
\begin{equation}
    K(x)=\int_{\Omega}\kappa(x,y)dy \quad  x\in\overline{\Omega}.
\end{equation}
By hypothesis {\bf (H1)}, $K$ is H\"older continuous on $X$  and $K\in X^{++}$; hence, $K_{\min}>0.$ Next,  let $\xi>0$  and $h\in X$ be fixed. Let $\lambda_p(\xi\mathcal{K}-h\mathcal{I})$  denote the principal spectrum point  of the bounded linear operator $\xi\mathcal{K}-h\mathcal{I}$ on $X$. Thus, as in Proposition \ref{Prop-1}, $\lambda_p(\xi\mathcal{K}-h\mathcal{I})\in \sigma(\xi\mathcal{K}-h\mathcal{I})$ and 
\begin{equation}\label{Pl7-1}
\lambda_p(\xi\mathcal{K}-h\mathcal{I})=\lim_{t\to\infty}\frac{\ln\big(\big\|e^{t(\xi\mathcal{K}-h\mathcal{I})}\big\|\big)}{t},
\end{equation}
where $ \{e^{t(\xi\mathcal{K}-h\mathcal{I})}\}_{t\ge 0}$ denotes the strongly positive uniformly continuous  semigroup generated by the bounded linear operator $\xi\mathcal{K}-h\mathcal{I}$ on $X$. Note that for every $\nu>\lambda_p(\xi\mathcal{K}-h\mathcal{I})$, the bounded linear operator $\nu\mathcal{I}-(\xi\mathcal{K}-h\mathcal{I})$ is invertible.  Denote  by $\Psi_{\xi,h}(\cdot,\nu,\cdot)=(\nu\mathcal{I}-(\xi\mathcal{K}-h\mathcal{I}))^{-1}$ the inverse operator of $\nu\mathcal{I}-(\xi\mathcal{K}-h\mathcal{I})$, that is for every $w\in X$, $\Psi_{\xi,h}(x,\nu,w)$ is the unique solution of the equation 
\begin{equation}\label{pl2-1}
    \nu \Psi=\xi\mathcal{K}\Psi-h\Psi+w.
\end{equation}
Note that since \eqref{Pl7-1} holds (see also  \cite{Engel_Nagel}), then 
\begin{equation}\label{Pl6-2}
    \Psi_{\xi,h}(\cdot,\nu,w)=\int_{0}^{\infty}e^{-\nu t}e^{t(\xi\mathcal{K}-h\mathcal{I})}w dt\quad \forall\ w\in X,
\end{equation}
where the above integral is defined with respect to the uniform topology.

 \section{ Proofs  of Theorems \ref{TH4} and \ref{Th2-1}} \label{Sec4}
 This section is devoted for the proofs of Theorems \ref{TH4} and \ref{Th2-1}.
 
 \begin{proof}[Proof of Theorem \ref{TH4}] The proof is divided into two steps.
 
\noindent {\bf Step 1}. In this step, we show that 
 \begin{equation}\label{HK4}
    \lambda^*(\mu\circ\mathcal{K}+\mathcal{A})=\lambda_p(\mu\circ\mathcal{K}+\mathcal{A}).
 \end{equation}
 It is clear from the positivity of the uniformly continuous semigroup $\{{\bf U}^{\mu}(t)\}_{t\ge 0}$ that  $\lambda^*(\mu\circ\mathcal{K}+\mathcal{A})\ge \lambda_p(\mu\circ\mathcal{K}+\mathcal{A})$. Now, let $\varepsilon>0$ be fixed. Note from the definition of $\lambda_p(\mu\circ\mathcal{K}+\mathcal{A})$ that  $\lambda_p(\mu\circ\mathcal{K}+\mathcal{A})+\varepsilon$ belongs to the resolvent set of the bounded linear operator $\mu\circ\mathcal{K}+\mathcal{A}$. Furthermore, by \eqref{M-eq6}, it follows from the Hille--Yosida theorem that $ ((\lambda_p(\mu\circ\mathcal{K}+\mathcal{A})+\varepsilon)\mathcal{I}-(\mu\circ\mathcal{K}+\mathcal{A}))^{-1}$ is the Laplace transform of the positive  uniformly continuous  semigroup $\{{\bf U}^{\mu}(t)\}_{t\ge 0}$, where $((\lambda_p(\mu\circ\mathcal{K}+\mathcal{A})+\varepsilon)\mathcal{I}-(\mu\circ\mathcal{K}+\mathcal{A}))^{-1}$ denotes the inverse of the bounded linear operator $(\mu\circ\mathcal{K}+\mathcal{A})+\varepsilon)\mathcal{I}-(\mu\circ\mathcal{K}+\mathcal{A})$. Hence, for every $\psi\in X^{++}\times X^{++}$, we have that $ ((\mu\circ\mathcal{K}+\mathcal{A})+\varepsilon)\mathcal{I}-(\mu\circ\mathcal{K}+\mathcal{A}))^{-1}\psi\in X^{++}\times X^{++} $. In particular for $\psi={\bf 1}$,
 \begin{equation} \label{IL9}
 \varphi:=((\mu\circ\mathcal{K}+\mathcal{A})+\varepsilon)\mathcal{I}-(\mu\circ\mathcal{K}+\mathcal{A}))^{-1}{\bf 1}\in X^{++}\times X^{++}
\end{equation}
 and 
 \begin{equation} \label{IL10}
 (\lambda_p(\mu\circ\mathcal{K}+\mathcal{A})+\varepsilon)\varphi =(\mu\circ\mathcal{K}+\mathcal{A})\varphi+{\bf 1}>(\mu\circ\mathcal{K}+\mathcal{A})\varphi.
 \end{equation}
 Therefore, $\lambda_p(\mu\circ\mathcal{K}+\mathcal{A})+\varepsilon\ge \lambda^*(\mu\circ\mathcal{K}+\mathcal{A})$. Since $\varepsilon>0$ was arbitrarily chosen, then $\lambda_p(\mu\circ\mathcal{K}+\mathcal{A})\ge \lambda^*(\mu\circ\mathcal{K}+\mathcal{A})$, which completes the proof of Step 1.
 
 \noindent{\bf Step 2}. In the current step, we show that 
 \begin{equation}\label{HK5}
      \lambda_*(\mu\circ\mathcal{K}+\mathcal{A})=\lambda_p(\mu\circ\mathcal{K}+\mathcal{A}).
 \end{equation}
 To this end, let $\sigma:=\lambda_p(\mu\circ\mathcal{K}+\mathcal{A})-\lambda_*(\mu\circ\mathcal{K}+\mathcal{A})$. It is clear from \eqref{HK3} and \eqref{HK4} that $\sigma\ge 0$.  We shall show that $\sigma=0$. Suppose on the contrary that $\sigma>0$. Hence, \begin{equation} \label{HK6}
  \lambda_p(\mu\circ\mathcal{K}+\mathcal{A}-\lambda_*(\mu\circ\mathcal{K}+\mathcal{A}){\bf 1}\circ\mathcal{I})=\lambda_p(\mu\circ\mathcal{K}+\mathcal{A})-\lambda_*(\mu\circ\mathcal{K}+\mathcal{A})=\sigma>0.
 \end{equation}
 Therefore, it follows from \cite[Theorem 4-(i)]{OSUU2023} that when $\mathcal{A}$ is replaced with $\mathcal{A}-\lambda_*(\mu\circ\mathcal{K}+\mathcal{A}){\bf 1}\circ\mathcal{I}$ in system \eqref{model} with $c=g\equiv 0$,  there is a positive steady-state solution ${\bf u}^{**}$ of the system 
 \begin{equation} \label{HK7}
   \begin{cases}
      0=\mu_1\mathcal{K}u_1^{**} +ru_2^{**}-(s+a+\lambda_*(\mu\circ\mathcal{K}+\mathcal{A}) +bu_1^{**})u_1^{**} & x\in\overline{\Omega},\cr
      0=\mu_2\mathcal{K}u_2^{**} +su_1^{**}-(e+\lambda_*(\mu\circ\mathcal{K}+\mathcal{A})+fu_2^{**})u_2^{**} & x\in\overline{\Omega}.
     \end{cases}
 \end{equation}
 Hence, with $\varepsilon^*:= \min_{x\in\overline{\Omega}}\min \{b(x)u_1^{**}(x),f(x)u_2^{**}(x)\}>0$, it follows from \eqref{HK7} that 
 $$ 
 (\varepsilon^*+\lambda_*(\mu\circ\mathcal{K}+\mathcal{A})){\bf u}^{**}\leq (\mu\circ\mathcal{K}+\mathcal{A}){\bf u}^{**},
 $$
 which implies that $\varepsilon^*+ \lambda_*(\mu\circ\mathcal{K}+\mathcal{A})\le \lambda_*(\mu\circ\mathcal{K}+\mathcal{A})$. This is impossible since $\varepsilon^*>0$. Therefore, $\sigma=0$, hence \eqref{HK5} holds. This completes the proof of the Theorem.
 
 \end{proof}
 
\noindent Now, for every $x\in\overline{\Omega}$, let $\Lambda(x)$ denote the maximal eigenvalue of the matrix $A(x)$ and ${\bf Q}(x)$ be an associated nonnegative eigenvector satisfying $Q_1(x)+Q_2(x)=1$. By direct computations,
 \begin{equation}\label{HK8}
     \Lambda(x)=\frac{1}{2}\Big( \sqrt{(a(x)+s(x)-e(x))^2+4r(x)s(x))}-(a(x)+s(x)+e(x))\Big)\quad \forall\ x\in\overline{\Omega},
 \end{equation}
 and $Q_1(x)$ and $Q_2(x)$ satisfy
 \begin{eqnarray}\label{HK9}
 (\Lambda(x)+a(x)+s(x))Q_1(x)=r(x)Q_2(x) \quad \forall\ x\in\overline{\Omega}.
 \end{eqnarray}
 Hence, since $Q_1(x)=1-Q_2(x)$,
 \begin{equation}\label{HK10} \Lambda(x)+a(x)+s(x)=(r(x)+\Lambda(x)+a(x)+s(x))Q_2(x).
 \end{equation}
 However,
 \begin{align}\label{HK11}
    \Lambda(x)+a(x)+s(x)=\frac{1}{2}\Big( \sqrt{(a(x)+s(x)-e(x))^2+4r(x)s(x)}+(a(x)+s(x)-e(x))\Big).
 \end{align}
 Hence, if  $r(x)s(x)>0$, we have that $r(x)+a(x)+s(x)>0$, in which case it follows from \eqref{HK10} that 
 \begin{equation}\label{HK12}
     Q_2(x)=\frac{\Lambda(x)+a(x)+s(x)}{r(x)+\Lambda(x)+a(x)+s(x)}>0
 \end{equation}
 and 
 \begin{equation}\label{HK13}
     Q_1(x)=\frac{r(x)}{r(x)+ \Lambda(x) +a(x)+s(x)}>0.
 \end{equation}
 In view of \eqref{HK12} and \eqref{HK13}, if $r,s\in X^{++}$, then ${\bf Q}\in X^{++}\times X^{++}$.

 \begin{lemma}\label{Lem-3}
     Let $\Lambda(x)$ be the maximal eigenvalue of the cooperative matrix $A(x)$, then,
     \begin{equation}\label{HK15}
    \limsup_{\max\{\mu_1,\mu_2\}\to { 0}} \lambda_p(\mu\circ\mathcal{K}+\mathcal{A})\leq  \Lambda_{\max}.
 \end{equation}
 \end{lemma}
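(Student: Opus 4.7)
\textbf{Proof plan for Lemma \ref{Lem-3}.}  My plan is to exploit Theorem \ref{TH4}, which identifies $\lambda_p(\mu\circ\mathcal{K}+\mathcal{A})$ with the generalized principal eigenvalue $\lambda^*(\mu\circ\mathcal{K}+\mathcal{A})$. Thus it suffices to exhibit, for every $\varepsilon>0$ and every sufficiently small $\mu$, a test function $\varphi\in X^{++}\times X^{++}$ satisfying $(\mu\circ\mathcal{K}+\mathcal{A})\varphi\le_1 (\Lambda_{\max}+O(\varepsilon)+O(\max\{\mu_1,\mu_2\}))\varphi$.

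The natural candidate is the pointwise principal eigenvector ${\bf Q}(x)$ of $A(x)$ satisfying $A(x){\bf Q}(x)=\Lambda(x){\bf Q}(x)$, since then $\mathcal{A}({\bf Q})\le_1 \Lambda_{\max}{\bf Q}$, and the term $\mu\circ\mathcal{K}({\bf Q})$ is of size $O(\max\{\mu_1,\mu_2\})$. The obstruction is that ${\bf Q}$ need not lie in $X^{++}\times X^{++}$: formulas \eqref{HK12}--\eqref{HK13} show that positivity of ${\bf Q}$ requires $r,s\in X^{++}$, while hypothesis {\bf (H2)} only guarantees $r,s\ge 0$. This is the main technical step, and I will resolve it by a perturbation of the coefficients.

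For $\varepsilon>0$ small, let $A_\varepsilon(x):=A(x)+\varepsilon\left(\begin{smallmatrix}0&1\\1&0\end{smallmatrix}\right)$, i.e., replace $r$ by $r+\varepsilon$ and $s$ (in the off-diagonal position only) by $s+\varepsilon$, while leaving $-(a+s)$ and $-e$ unchanged. The resulting matrix is still cooperative and satisfies $A(x)\le A_\varepsilon(x)$ entrywise, so the associated bounded linear operator $\mathcal{A}_\varepsilon$ on $X\times X$ dominates $\mathcal{A}$ in the sense that generates a larger positive semigroup; by \eqref{M-eq6} this gives $\lambda_p(\mu\circ\mathcal{K}+\mathcal{A})\le\lambda_p(\mu\circ\mathcal{K}+\mathcal{A}_\varepsilon)$. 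Since now the off-diagonal entries of $A_\varepsilon$ are strictly positive on $\overline{\Omega}$, formulas \eqref{HK12}--\eqref{HK13} (applied with $r+\varepsilon,s+\varepsilon$ in place of $r,s$) furnish the pointwise principal eigenvector ${\bf Q}_\varepsilon\in X^{++}\times X^{++}$ with $A_\varepsilon(x){\bf Q}_\varepsilon(x)=\Lambda_\varepsilon(x){\bf Q}_\varepsilon(x)$, and the explicit formula \eqref{HK8} shows that $\Lambda_\varepsilon\to\Lambda$ uniformly on $\overline{\Omega}$ as $\varepsilon\to 0^+$.

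Now I compute
\begin{equation*}
(\mu\circ\mathcal{K}+\mathcal{A}_\varepsilon){\bf Q}_\varepsilon=\mu\circ\mathcal{K}({\bf Q}_\varepsilon)+\Lambda_\varepsilon(x){\bf Q}_\varepsilon.
\end{equation*}
Since ${\bf Q}_\varepsilon\in X^{++}\times X^{++}$ and $\mathcal{K}$ is bounded, there exists a constant $C_\varepsilon>0$, depending only on $\varepsilon$ and $\|\kappa\|_\infty$, such that $\mu\circ\mathcal{K}({\bf Q}_\varepsilon)\le_1 C_\varepsilon\max\{\mu_1,\mu_2\}{\bf Q}_\varepsilon$. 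Combining, $(\mu\circ\mathcal{K}+\mathcal{A}_\varepsilon){\bf Q}_\varepsilon\le_1\bigl(\Lambda_{\varepsilon,\max}+C_\varepsilon\max\{\mu_1,\mu_2\}\bigr){\bf Q}_\varepsilon$, and the $\lambda^*$-characterization from Theorem \ref{TH4} yields
\begin{equation*}
\lambda_p(\mu\circ\mathcal{K}+\mathcal{A})\le\lambda_p(\mu\circ\mathcal{K}+\mathcal{A}_\varepsilon)\le\Lambda_{\varepsilon,\max}+C_\varepsilon\max\{\mu_1,\mu_2\}.
\end{equation*}
Letting $\max\{\mu_1,\mu_2\}\to 0$ and then $\varepsilon\to 0^+$, using $\Lambda_{\varepsilon,\max}\to\Lambda_{\max}$, gives \eqref{HK15}. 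The only nontrivial step is the construction of a strictly positive test eigenvector; once the perturbation trick is in place, the rest is a direct application of Theorem \ref{TH4}.
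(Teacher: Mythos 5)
Your proof is correct and takes essentially the same route as the paper: regularize $r,s$ by $\varepsilon$ so that the pointwise Perron eigenvector ${\bf Q}_\varepsilon$ is strictly positive, test it against $\mu\circ\mathcal{K}+\mathcal{A}$ via the $\lambda^*$-characterization of Theorem \ref{TH4}, and send $\max\{\mu_1,\mu_2\}\to 0$ and then $\varepsilon\to 0^+$. The only cosmetic difference is that the paper also replaces $s$ by $s+\varepsilon$ inside the diagonal entry $-(a+s)$ and compensates by an explicit $\varepsilon(Q_1^\varepsilon-Q_2^\varepsilon)$ term, whereas you perturb only the off-diagonal entries and instead insert the explicit monotonicity step $\lambda_p(\mu\circ\mathcal{K}+\mathcal{A})\le\lambda_p(\mu\circ\mathcal{K}+\mathcal{A}_\varepsilon)$; both bookkeeping choices lead to the same estimate.
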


 \begin{proof} Let $\varepsilon>0$ be fixed and set $r^{\varepsilon}=r+\varepsilon$ and $s^{\varepsilon}=s+\varepsilon$. Next, let ${\bf Q}^{\varepsilon}(x)$ be given by \eqref{HK12} and $\eqref{HK13}$ where $r$ and $s$ are replaced with $r^{\varepsilon}$ and $s^{\varepsilon}$. Then, there is $\sigma_0>0$ such that 
 \begin{equation}\label{HK14}
      (\Lambda_{\max}^{\varepsilon}+2\varepsilon){\bf Q}^{\varepsilon}\ge (\mu\circ\mathcal{K}+\mathcal{A}){\bf Q}^{\varepsilon}\quad \forall\ 0<\mu_1,\mu_2\le \sigma_0,
 \end{equation}
 where $\Lambda^{\varepsilon}(x)$ is given by \eqref{HK8} with $r$ and $s$ replaced with $r^{\varepsilon}$ and $s^{\varepsilon}$.  To see this, observe that from \eqref{HK12} and \eqref{HK13}, we know that ${\bf Q}^{\varepsilon}\in X^{++}\times X^{++}.$
  Hence,  by computations
 \begin{align*}
    & \mu_1\int_{\Omega}\kappa(x,y)(Q_1^{\varepsilon}(y)-Q_1^{\varepsilon}(x))dy+r(x)Q_2^{\varepsilon}(x)-(a(x)+s(x))Q_1^{\varepsilon}(x)\cr 
    =& \mu_1\int_{\Omega}\kappa(x,y)(Q_1^{\varepsilon}(y)-Q_1^{\varepsilon}(x))dy+\Big(r^{\varepsilon}(x)Q_2^{\varepsilon}(x)-(a(x)+s^{\varepsilon}(x))Q_1^{\varepsilon}(x)\Big) +\varepsilon(Q_1^{\varepsilon}(x)-Q_2^{\varepsilon}(x))\cr 
    =& \mu_1\int_{\Omega}\kappa(x,y)(Q_1^{\varepsilon}(y)-Q_1^{\varepsilon}(x))dy+\Lambda^{\varepsilon}(x)Q_1^{\varepsilon}(x) +\varepsilon(Q_1^{\varepsilon}(x)-Q_2^{\varepsilon}(x))\cr 
    \le & \mu_1\|K\|_{\infty}\|Q_1^{\varepsilon}\|_{\infty}|\Omega| +(\Lambda_{\max}^{\varepsilon}+\varepsilon)Q_1^{\varepsilon}(x)\cr 
    \le& (\Lambda_{\max}^{\varepsilon}+2\varepsilon)Q_1^{\varepsilon}(x)
 \end{align*}
 whenever $\mu_1\le \frac{\varepsilon Q_{1,\min}^{\varepsilon}}{\|K\|_{\infty}\|Q_1^{\varepsilon}\|_{\infty}|\Omega|}$. Similar computations show that 
 $$ 
 \mu_2\int_{\Omega}\kappa(x,y)(Q_2^{\varepsilon}(y)-Q_2^{\varepsilon}(x))dy+s(x)Q_1^{\varepsilon}(x)-e(x)Q_2^{\varepsilon}(x)\le (\Lambda_{\max}^{\varepsilon}+2\varepsilon)Q_2^{\varepsilon}(x)
 $$
 whenever $\mu_2\le \frac{\varepsilon Q_{2,\min}^{\varepsilon}}{\|K\|_{\infty}\|Q_2^{\varepsilon}\|_{\infty}|\Omega|}$. Whence, taking $\sigma_0:=\frac{\varepsilon }{\|K\|_{\infty}|\Omega|}\min\Big\{\frac{ Q_{1,\min}^{\varepsilon}}{\|Q_1^{\varepsilon}\|_{\infty}},\frac{ Q_{2,\min}^{\varepsilon}}{\|Q_2^{\varepsilon}\|_{\infty}}\Big\}$, then \eqref{HK14} follows. Now, from \eqref{HK14}, since ${\bf Q}^{\varepsilon}\in X^{++}\times X^{++}$, then  by Theorem \ref{TH4}, 
 $$
 \Lambda_{\max}^{\varepsilon}+2\varepsilon\ge  \lambda^*(\mu\circ\mathcal{K}+\mathcal{A})=\lambda_p(\mu\circ\mathcal{K}+\mathcal{A})\quad \forall\  0<\mu_1,\mu_2<\sigma_0.
 $$
 Since $\varepsilon$ is arbitrarily chosen and  $\lim_{\varepsilon\to 0^+}(\Lambda_{\max}^{\varepsilon}+2\varepsilon)=\Lambda_{\max}$, \eqref{HK15} then follows.
 \end{proof}

 \begin{lemma}\label{Lem-4}
      Let  $\Lambda(x)$  be the maximal eigenvalue of the cooperative matrix $A(x)$, then,
       \begin{equation}\label{HK19}
 \Lambda_{\max} \le \liminf_{\max\{\mu_1,\mu_2\}\to{ 0}}\lambda_p(\mu\circ\mathcal{K}+\mathcal{A}).
 \end{equation}
 \end{lemma}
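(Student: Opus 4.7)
The plan is to extract the lower bound from a spectral inclusion: for each $x_0\in\overline{\Omega}$, I will show that the Perron eigenvalue of a small perturbation of $A(x_0)$ already belongs to $\sigma(\mu\circ\mathcal{K}+\mathcal{A})$, by producing an approximate eigenvector concentrated at $x_0$. The crucial decomposition is
\[
\mathcal{K} = T_\kappa - K\mathcal{I}, \qquad T_\kappa u(x) := \int_\Omega \kappa(x,y)u(y)\,dy,
\]
which yields
\[
\mu\circ\mathcal{K}+\mathcal{A} = \mu\circ T_\kappa + M_{\tilde{A}_\mu},\qquad \tilde{A}_\mu(x) := A(x) - \mathrm{diag}(\mu_1,\mu_2)K(x),
\]
where $M_{\tilde{A}_\mu}$ denotes multiplication by $\tilde{A}_\mu(\cdot)$ on $X\times X$, and $\mu\circ T_\kappa$ is compact on $X\times X$ because $\kappa$ is continuous on $\overline{\Omega}\times\overline{\Omega}$.

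Fix $x_0\in\overline{\Omega}$, let $\tilde{\Lambda}_\mu(x_0)$ be the maximal (Perron) eigenvalue of the cooperative matrix $\tilde{A}_\mu(x_0)$, and let $\mathbf{v}_0\in\mathbb{R}^2_{\geq 0}\setminus\{0\}$ be a corresponding nonnegative eigenvector. I would pick a sequence $\{\phi_n\}\subset X^+$ with $\|\phi_n\|_\infty = 1$ and $\mathrm{supp}\,\phi_n\subset \overline{B(x_0,1/n)}\cap\overline{\Omega}$, and form $\varphi_n := \mathbf{v}_0\phi_n\in X\times X$. Two short computations show that $\varphi_n$ is an approximate eigenvector at value $\tilde{\Lambda}_\mu(x_0)$: first, $\|\mu\circ T_\kappa\varphi_n\|_\infty\to 0$ since $\int_\Omega \phi_n\to 0$ and $\kappa$ is bounded; second, using $\tilde{A}_\mu(x_0)\mathbf{v}_0 = \tilde{\Lambda}_\mu(x_0)\mathbf{v}_0$,
\[
\|M_{\tilde{A}_\mu}\varphi_n - \tilde{\Lambda}_\mu(x_0)\varphi_n\|_\infty \le |\mathbf{v}_0|\sup_{x\in \overline{B(x_0,1/n)}\cap\overline{\Omega}}\|\tilde{A}_\mu(x)-\tilde{A}_\mu(x_0)\|\to 0
\]
by continuity of $\tilde{A}_\mu$. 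Since $\|\varphi_n\|$ stays bounded away from $0$, this places $\tilde{\Lambda}_\mu(x_0)$ in the approximate point spectrum, hence in $\sigma(\mu\circ\mathcal{K}+\mathcal{A})$; being real, $\tilde{\Lambda}_\mu(x_0)\le \lambda_p(\mu\circ\mathcal{K}+\mathcal{A})$. Taking the supremum over $x_0\in\overline{\Omega}$ yields $\max_{x\in\overline{\Omega}}\tilde{\Lambda}_\mu(x)\le \lambda_p(\mu\circ\mathcal{K}+\mathcal{A})$.

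The limit is then routine: $\tilde{A}_\mu\to A$ uniformly on $\overline{\Omega}$ as $\max\{\mu_1,\mu_2\}\to 0$, and the Perron eigenvalue of a cooperative $2\times 2$ matrix is continuous in its entries, so $\tilde{\Lambda}_\mu(\cdot)\to\Lambda(\cdot)$ uniformly on $\overline{\Omega}$ and consequently $\max_x\tilde{\Lambda}_\mu(x)\to\Lambda_{\max}$, giving the claimed inequality. The step I expect to require the most care is the approximate-eigenvector estimate itself: verifying that the compact (integral) part $\mu\circ T_\kappa$ genuinely vanishes on the concentrating sequence while the multiplication part $M_{\tilde{A}_\mu}$ asymptotically captures the constant $\tilde{\Lambda}_\mu(x_0)$. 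An alternative in the spirit of Lemma~\ref{Lem-3}, namely constructing a strictly positive sub-eigenvector in $X^{++}\times X^{++}$ and invoking $\lambda_*=\lambda_p$, seems much harder here: because $\Lambda(x)$ is generally non-constant, the natural test function $\mathbf{Q}(x_0)(\eta+\psi)$ with $\psi$ bumped near $x_0$ and $\eta\sim\mu$ runs into transition-region pathologies that the present compactness-based argument neatly avoids.
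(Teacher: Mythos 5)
Your proof is correct, but it takes a genuinely different route from the paper's. The paper constructs a \emph{nonnegative sub-eigenfunction}: it fixes $x_0$ near which $\Lambda(\cdot)\geq\Lambda_{\max}-\varepsilon_0$, multiplies a smooth cutoff $\psi$ by the pointwise Perron eigenvector $\mathbf{Q}(x)$ (not $\mathbf{Q}(x_0)$), verifies the differential inequality
\[
(\Lambda_{\max}-\varepsilon_0-(\mu_1+\mu_2)\|K\|_\infty)\psi\mathbf{Q}\ \le_1\ (\mu\circ\mathcal{K}+\mathcal{A})(\psi\mathbf{Q}),
\]
and then invokes the positivity of the semigroup $\{{\bf U}^{\mu}(t)\}$ together with the growth-bound characterization $\lambda_p=\lim_t t^{-1}\ln\|{\bf U}^\mu(t)\|$ from Proposition \ref{Prop-1}, \emph{not} the $\lambda_*=\lambda_p$ characterization of Theorem \ref{TH4} (so the ``harder alternative'' you sketch at the end is not actually the paper's argument either: the paper's test function is only nonnegative, and the growth bound accommodates that cleanly). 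Your argument instead exhibits a Weyl sequence concentrated at $x_0$ for the decomposition $\mu\circ\mathcal{K}+\mathcal{A}=\mu\circ T_\kappa+M_{\tilde A_\mu}$, showing directly that the Perron value $\tilde\Lambda_\mu(x_0)$ lies in the approximate point spectrum; this is purely an operator-spectrum argument and needs no semigroup or positivity input. It also yields the slightly sharper intermediate bound $\max_x\tilde\Lambda_\mu(x)\le\lambda_p(\mu\circ\mathcal{K}+\mathcal{A})$ rather than $\Lambda_{\max}-\varepsilon_0-(\mu_1+\mu_2)\|K\|_\infty\le\lambda_p$. Both inequalities pass to the same limit. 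One small remark: you do not actually use the compactness of $\mu\circ T_\kappa$ as a structural fact (e.g.\ via a Weyl stability theorem); you only use the elementary estimate that $T_\kappa$ annihilates a concentrating sequence, so the compactness mention is motivational rather than load-bearing.
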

 
 \begin{proof} 
 
  Let $\varepsilon_0>0$ be fixed.  Then there is  $x_0\in \Omega$ and $r_0>0$ such that 
 \begin{equation}\label{HK16}
 B(x_0,4r_0)\subset\Omega\quad \text{and}\quad \Lambda(x)\ge \Lambda_{\max}-\varepsilon_0 \quad \forall\ x\in B(x_0,4r_0).
 \end{equation}
 Now, choose $\psi\in C^{\infty}(\mathbb{R}^n)$ satisfying 
 \begin{equation}\label{HK17}
 0\le \psi\le 1,\quad \psi\equiv 1 \ \text{on}\ B(x_0.r_0)\quad \text{and}\quad \psi\equiv 0 \ \text{on}\ \mathbb{R}^n\setminus B(x_0,2r_0).
 \end{equation}
 We claim that
 \begin{equation}\label{HK18}
      (\Lambda_{\max}-\varepsilon_0-(\mu_1+\mu_2)\|K\|_{\infty})\psi{\bf Q}\le (\mu\circ\mathcal{K}+\mathcal{A})\psi{\bf Q}\quad \forall\ \mu=(\mu_1,\ \mu_2): \mu_1, \mu_2 > 0.
 \end{equation}
 \bk
 Indeed, by computations,
 \begin{align*}
      & \mu_1\int_{\Omega}\kappa(x,y)(\psi(y)Q_1(y)-Q_1(x)\psi(x))dy+r(x)Q_2(x)\psi(x)-(a(x)+s(x))Q_1(x)\psi(x)\cr
      \ge & - \mu_1Q_1(x)\psi(x)\|K\|_{\infty}+(r(x)Q_2(x)-(a(x)+s(x))Q_1(x))\psi(x)\cr
      =&- \mu_1Q_1(x)\psi(x)\|K\|_{\infty}+\Lambda(x)Q_1(x)\psi(x)\cr 
      \ge& - \mu_1Q_1(x)\psi(x)\|K\|_{\infty}+(\Lambda_{\max}-\varepsilon_0)Q_1(x)\psi(x)
 \end{align*}
 since \eqref{HK16} and \eqref{HK17} hold. Hence 
 \begin{align*}
 &\mu_1\int_{\Omega}\kappa(x,y)(\psi(y)Q_1(y)-Q_1(x)\psi(x))dy+r(x)Q_2(x)\psi(x)-(a(x)+s(x))Q_1(x)\psi(x)\cr
      \ge & (\Lambda_{\max}-\varepsilon_0- (\mu_1+\mu_2)\|K\|_{\infty})Q_1(x)\psi(x)\quad \forall \ x\in\overline{\Omega}.
      \end{align*}
      Similar arguments show that 
  \begin{align*}
 &\mu_2\int_{\Omega}\kappa(x,y)(\psi(y)Q_2(y)-Q_2(x)\psi(x))dy+s(x)Q_1(x)\psi(x)-e(x)Q_2(x)\psi(x)\cr
      \ge & (\Lambda_{\max}-\varepsilon_0- (\mu_1+\mu_2)\|K\|_{\infty})Q_1(x)\psi(x)\quad \forall \ x\in\overline{\Omega},
      \end{align*}
      hence \eqref{HK18} holds. Now, since $\{{\bf U}^{\mu}(t)\}_{t\ge 0}$ is positive and $\frac{\ln(\|{\bf U}^{\mu}(t)\|)}{t}\to\lambda_p(\mu\circ\mathcal{K}+\mathcal{A})$ as $t\to\infty$ (Proposition \ref{Prop-1}), it follows from \eqref{HK18} that 
$$ 
\Lambda_{\max}-\varepsilon_0-(\mu_1+\mu_2)\|K\|_{\infty}\le \lambda_p(\mu\circ\mathcal{K}+\mathcal{A}) \quad \forall\ \varepsilon_0,\ \mu_1,\ \mu_2>0.
$$
This implies that \eqref{HK19} holds since $\varepsilon_0$ is arbitrarily chosen. 
 \end{proof}

\noindent For every $\mu=(\mu_1,\mu_2)\in\mathbb{R}^{+}\times\mathbb{R}^+$, let $\Lambda_{\mu}(x)$ denote the maximal eigenvalue of the cooperative matrix 
\begin{equation}
    A_{\mu}(x):=A(x)-K(x)\text{Diag}(\mu)
\end{equation} 
where $\text{Diag}(\mu)$ is the $2\times 2$ diagonal matrix with diagonal entries $\mu_1$ and $\mu_2$. Hence, 
\begin{equation}\label{HK20}
    \Lambda_{\mu}(x)\le \Lambda(x)-\min\{\mu_1,\mu_2\}K_{\min}\leq \Lambda_{\max}-\min\{\mu_1,\mu_2\}K_{\min}.
\end{equation}

\begin{lemma}\label{Lem-5}
For every $\mu$ satisfying $\min\{\mu_1,\mu_2\}> \frac{\Lambda_{\max}+\max\{\|e\|_{\infty},\|a+s\|_{\infty}\}}{K_{\min}}$,  $\lambda_p(\mu\circ\mathcal{K}+\mathcal{A}) $ is the principal eigenvalue of the bounded linear operator $\mu\circ\mathcal{K}+\mathcal{A}$.

\end{lemma}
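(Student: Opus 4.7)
The plan is to decompose $\mu\circ\mathcal{K}+\mathcal{A}$ as the sum of a compact operator and a matrix multiplication operator, then use the largeness of $\mu$ to separate $\lambda_p(\mu\circ\mathcal{K}+\mathcal{A})$ from the essential spectrum and apply Krein--Rutman to extract a strictly positive eigenfunction.

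First, I would write $\mu\circ\mathcal{K}+\mathcal{A}=\mathcal{L}_\mu+\mathcal{M}_\mu$, where
\[
\mathcal{L}_\mu{\bf u}(x)=\left(\mu_1\int_\Omega\kappa(x,y)u_1(y)\,dy,\ \mu_2\int_\Omega\kappa(x,y)u_2(y)\,dy\right),\qquad \mathcal{M}_\mu{\bf u}(x)=A_\mu(x){\bf u}(x).
\]
The continuity of $\kappa$ on $\overline{\Omega}\times\overline{\Omega}$ together with Arzel\`a--Ascoli makes $\mathcal{L}_\mu$ compact on $X\times X$. A standard localization argument shows that the spectrum of the matrix multiplication operator $\mathcal{M}_\mu$ coincides with $\bigcup_{x\in\overline{\Omega}}\sigma(A_\mu(x))$, is purely essential, and has largest real part equal to $\max_{x\in\overline{\Omega}}\Lambda_\mu(x)$. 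Since compact perturbations preserve the essential spectrum, the same bound applies to $\mu\circ\mathcal{K}+\mathcal{A}$: every spectral value with real part strictly greater than $\max_x\Lambda_\mu(x)$ is an isolated eigenvalue of finite algebraic multiplicity.

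Next, combining \eqref{HK20} with the lower bound $\lambda_p(\mu\circ\mathcal{K}+\mathcal{A})\ge -\max\{\|a+s\|_\infty,\|e\|_\infty\}$ from \eqref{HK3} and Theorem \ref{TH4}, the hypothesis $\min\{\mu_1,\mu_2\}K_{\min}>\Lambda_{\max}+\max\{\|a+s\|_\infty,\|e\|_\infty\}$ forces
\[
\max_{x\in\overline{\Omega}}\Lambda_\mu(x)\;\le\;\Lambda_{\max}-\min\{\mu_1,\mu_2\}K_{\min}\;<\;-\max\{\|a+s\|_\infty,\|e\|_\infty\}\;\le\;\lambda_p(\mu\circ\mathcal{K}+\mathcal{A}),
\]
so that $\lambda_p(\mu\circ\mathcal{K}+\mathcal{A})$ lies strictly above the essential spectrum of $\mu\circ\mathcal{K}+\mathcal{A}$ and is therefore an isolated eigenvalue of finite algebraic multiplicity. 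To produce a strictly positive eigenfunction, I would apply Krein--Rutman to the resolvent: choose $\lambda_0>\lambda_p(\mu\circ\mathcal{K}+\mathcal{A})$ close to $\lambda_p(\mu\circ\mathcal{K}+\mathcal{A})$; then $R(\lambda_0):=(\lambda_0\mathcal{I}-(\mu\circ\mathcal{K}+\mathcal{A}))^{-1}$ is positive as the Laplace transform of the strongly positive semigroup $\{{\bf U}^\mu(t)\}_{t\ge 0}$ (see \eqref{M-eq4} and Proposition \ref{Prop-1}), its spectral radius equals $1/(\lambda_0-\lambda_p(\mu\circ\mathcal{K}+\mathcal{A}))$, and by the spectral-gap conclusion just obtained its essential spectral radius is at most $1/(\lambda_0-\max_x\Lambda_\mu(x))$, which is strictly smaller. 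Krein--Rutman for positive operators whose spectral radius exceeds their essential spectral radius then yields $\Phi\in(X^+\times X^+)\setminus\{{\bf 0}\}$ with $R(\lambda_0)\Phi=\tfrac{1}{\lambda_0-\lambda_p(\mu\circ\mathcal{K}+\mathcal{A})}\Phi$, equivalently $(\mu\circ\mathcal{K}+\mathcal{A})\Phi=\lambda_p(\mu\circ\mathcal{K}+\mathcal{A})\Phi$. Applying ${\bf U}^\mu(t)$ for $t>0$ and invoking the strong positivity in \eqref{M-eq4} upgrades $\Phi$ to an element of $X^{++}\times X^{++}$, so $\lambda_p(\mu\circ\mathcal{K}+\mathcal{A})$ is in fact the principal eigenvalue.

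The main obstacle is the essential-spectrum identification for the matrix multiplication operator $\mathcal{M}_\mu$ on the product Banach space $X\times X$ and the transfer of that bound, through the compact perturbation $\mathcal{L}_\mu$, to $\mu\circ\mathcal{K}+\mathcal{A}$; once the spectral gap $\lambda_p(\mu\circ\mathcal{K}+\mathcal{A})>\max_x\Lambda_\mu(x)$ is secured, the Krein--Rutman step is essentially automatic thanks to the strong positivity \eqref{M-eq4}.
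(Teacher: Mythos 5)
Your first half is exactly the paper's: combining \eqref{HK3}, \eqref{HK20}, and Theorem \ref{TH4} to force $\lambda_p(\mu\circ\mathcal{K}+\mathcal{A})>\max_{x\in\overline\Omega}\Lambda_\mu(x)$. Where you diverge is in how this gap is exploited. The paper stops there and simply invokes \cite[Theorem 2.1]{BS} to conclude that $\lambda_p$ is the principal eigenvalue, whereas you unpack the argument that is presumably behind that citation: split off the compact integral part $\mathcal{L}_\mu$, identify the residual operator as multiplication by the continuous matrix $A_\mu(x)$ whose spectrum is $\bigcup_x\sigma(A_\mu(x))$ and hence lies in $\{\mathcal{R}e\,\lambda\le\max_x\Lambda_\mu(x)\}$, use invariance of the essential spectrum under compact perturbation to place $\lambda_p$ in the discrete spectrum, and then run Krein--Rutman on the resolvent, upgrading the eigenfunction to $X^{++}\times X^{++}$ via strong positivity of the semigroup. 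This buys you a self-contained proof at the cost of length; the paper buys brevity at the cost of a reference to Bao--Shen. One small point worth making explicit: the passage from ``$\lambda_p$ lies outside the Weyl essential spectrum'' to ``$\lambda_p$ is an isolated eigenvalue of finite algebraic multiplicity'' is not a tautology for non-self-adjoint operators; it needs the observation that the connected component of the Fredholm region containing $\lambda_p$ also meets the resolvent set (true here since the spectrum is bounded), so that on this component the spectrum is discrete. Likewise, the bound on the essential spectral radius of $R(\lambda_0)$ should be justified by noting $|\lambda_0-\lambda|\ge\lambda_0-\mathcal{R}e\,\lambda\ge\lambda_0-\max_x\Lambda_\mu(x)$ for $\lambda$ in the essential spectrum. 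With those two remarks filled in, the proposal is correct.
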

\begin{proof}For every $\mu$ satisfying $ \frac{\Lambda_{\max}+\max\{\|e\|_{\infty},\|a+s\|_{\infty}\}}{K_{\min}}<\min\{\mu_1,\mu_2\}$, it follows from inequalities \eqref{HK3} and \eqref{HK20} and Theorem \ref{TH4} that $\lambda_p(\mu\circ\mathcal{K}+\mathcal{A})>\max_{x\in\overline{\Omega}}\Lambda_{\mu}(x)$,  hence  the result follows from \cite[Theorem 2.1]{BS}.
\end{proof}

 \begin{proof}[Proof of Theorem \ref{Th2-1} ] {\rm (i)} It follows from Lemmas \ref{Lem-3} and \ref{Lem-4}.
 
 {\rm (ii)} 
 
 For every $\mu=(\mu_1,\mu_2)$ satisfying  $\min\{\mu_1,\mu_2\}> \frac{\Lambda_{\max}+\max\{\|e\|_{\infty},\|a+s\|_{\infty}\}}{K_{\min}}$,  by Lemma \ref{Lem-5}, $\lambda_p(\mu\circ\mathcal{K}+\mathcal{A})$ is the principal eigenvalue of the bounded linear operator $\mu\circ\mathcal{K}+\mathcal{A}$; so we can choose an associated strictly positive eigenfunction $\varphi^{\mu}$ satisfying 
 \begin{equation}\label{HK23}
     \|\varphi_1^{\mu}\|_{L^2(\Omega)}+\|\varphi_{2}^{\mu}\|_{L^2(\Omega)}=1. 
 \end{equation}
 From this point, we set $w_i^{\mu}=\varphi_i^{\mu}-\frac{1}{|\Omega|}\int_{\Omega}\varphi_i^{\mu}$, $i=1,2$. The rest of the proof is divided into two steps.
 
 \noindent {\bf Step 1.} In the current step, we shall show that 
 \begin{equation}\label{HK24}
     \lim_{\mu\to\infty}\sum_{i=1}^{2}\| w_i^{\mu}\bk\|_{L^2(\Omega)}=0.
 \end{equation}
 To this end, we first note from Lemma \ref{D-1-lemma} that 
 \begin{equation*}
      2\beta_*\|w_i^{\mu}\|_{L^2(\Omega)}^2\le\int_{\Omega}\int_{\Omega}\kappa(x,y)(\varphi_i^{\mu}(y)-\varphi_i^{\mu}(x))^2dydx, \quad i=1,2.
 \end{equation*}
 Therefore,
 \begin{align*}  \lambda_p(\mu\circ\mathcal{K}+\mathcal{A})\int_{\Omega}(\varphi_1^{\mu})^2=&\mu_1\int_{\Omega}\int_{\Omega}\kappa(x,y)(\varphi_1^{\mu}(y)-\varphi_1^{\mu}(x))\varphi_1^{\mu}(x)dydx+\int_{\Omega}r\varphi_1^{\mu}\varphi_2^{\mu}-\int_{\Omega}(a+s)(\varphi_{1}^{\mu})^2\cr 
   =&-\frac{\mu_1}{2}\int_{\Omega}\int_{\Omega}\kappa(x,y)(\varphi_1^{\mu}(y)-\varphi_1^{\mu}(x))^2dydx+\int_{\Omega}r\varphi_1^{\mu}\varphi_2^{\mu}-\int_{\Omega}(a+s)(\varphi_{1}^{\mu})^2\cr 
   \le&-\mu_1\beta_* \|w_1^{\mu}\|_{L^2(\Omega)}^2+\int_{\Omega}r\varphi_1^{\mu}\varphi_2^{\mu}-\int_{\Omega}(a+s)(\varphi_{1}^{\mu})^2.
 \end{align*}
 Hence, in view of \eqref{HK3} and \eqref{HK23} and the fact that $\varphi_i^{\mu}>0$ for each $i=1,2$, we obtain that 
 
 \begin{align}\label{HK24-2-1}
  \Big\|w_1^{\mu}\|_{L^2(\Omega)}^2\le & \frac{1}{\mu_1\beta_*}\left(\int_{\Omega}r\varphi_1^{\mu}\varphi_2^{\mu} -\int_{\Omega}(\lambda_p(\mu\circ\mathcal{K}+\mathcal{A})+(a+s))(\varphi_1^{\mu})^2\right)\cr 
  \leq & \frac{1}{\mu_1\beta_*}\left(\frac{\|r\|_{\infty}}{2}\|\varphi_2^{\mu}\|_{L^2(\Omega)}^2 +\frac{\|r\|_{\infty}-2\lambda_p(\mu\circ\mathcal{K}+\mathcal{A})}{2}\|\varphi_1^{\mu}\|_{L^2(\Omega)}^2\right)\cr 
  \le &\frac{1}{\mu_1\beta_{*}}\left(\frac{\|r\|_{\infty}}{2}\|\varphi_2^{\mu}\|_{L^2(\Omega)}^2+\frac{\|r\|_{\infty}+2\max\{\|a+s\|_{\infty},\|e\|_{\infty}\}}{2}\|\varphi_1^{\mu}\|_{L^2(\Omega)}^2\right)\cr 
  \le & \frac{1}{\mu_1\beta_{*}}\left(\frac{\|r\|_{\infty}}{2}+\frac{\|r\|_{\infty}+2\max\{\|a+s\|_{\infty},\|e\|_{\infty}\}}{2}\right)\cr
  =& \frac{(\|r\|_{\infty}+\max\{\|a+s\|_{\infty},\|e\|_{\infty}\})}{\mu_1\beta_{*}}.
 \end{align}
 Similar arguments show that 
\begin{equation}\label{HK24-2-2} 
  \Big\|w_2^{\mu}\|_{L^2(\Omega)}^2\leq \frac{(\|s\|_{\infty}+\max\{\|a+s\|_{\infty},\| e\|_{\infty}\})}{\mu_2\beta_{*}}.
 \end{equation}
 Therefore \eqref{HK24} holds from the last two inequalities.
 
\noindent {\bf Step 2.} In the current step, we complete the proof of the theorem. Observe that 
 \begin{equation}\label{HK26}
     \begin{cases}
     \lambda_p(\mu\circ\mathcal{K}+\mathcal{A})\left[\frac{1}{|\Omega|}\int_{\Omega}\varphi_1^{\mu}\right]=\frac{1}{|\Omega|}\int_{\Omega}rw_2^{\mu}-\frac{1}{|\Omega|}\int_{\Omega}(a+s)w_1^{\mu}+\frac{\hat{r}}{|\Omega|}\int_{\Omega}\varphi^{\mu}_2-\frac{(\hat{a}+\hat{s})}{|\Omega|}\int_{\Omega}\varphi^{\mu}_1\cr 
     \lambda_p(\mu\circ\mathcal{K}+\mathcal{A})\left[\frac{1}{|\Omega|}\int_{\Omega}\varphi_2^{\mu}\right]=\frac{1}{|\Omega|}\int_{\Omega}sw_1^{\mu}-\frac{1}{|\Omega|}\int_{\Omega}ew_2^{\mu}+\frac{\hat{s}}{|\Omega|}\int_{\Omega}\varphi^{\mu}_1-\frac{\hat{e}}{|\Omega|}\int_{\Omega}\varphi^{\mu}_2.
     \end{cases}
 \end{equation}
 Since $\lambda_p(\mu\circ\mathcal{K}+\mathcal{A})$ is bounded (see \eqref{HK3}), without loss of generality after passing to a subsequence, we may suppose that $\lambda_p(\mu\circ\mathcal{K}+\mathcal{A})\to \tilde{\lambda} $ as $\mu\to\infty$ for some real mumber $\tilde{\lambda}$. Since $\{\Big(\frac{1}{|\Omega|}\int_{\Omega}\varphi_1^{\mu},\frac{1}{|\Omega|}\int_{\Omega}\varphi_2^{\mu}\Big)\}_{\mu}$ is also nonnegative and bounded (see \eqref{HK23}), we may also suppose that there is some nonnegative real vector ${\bf Q}^*$  such that $\Big(\frac{1}{|\Omega|}\int_{\Omega}\varphi_1^{\mu},\frac{1}{|\Omega|}\int_{\Omega}\varphi_2^{\mu}\Big)\to {\bf Q}^*$ as $\mu\to\infty$. Hence, by \eqref{HK23} and \eqref{HK24}, we have that $Q_1^*+Q_2^{*}=\frac{1}{\sqrt{|\Omega|}}$ and $\varphi^{\mu}\to {\bf Q}^*$ as $\mu\to\infty$, in $L^2(\Omega)$. As a result, by letting $\mu\to\infty$ in \eqref{HK26} and recalling \eqref{HK24}, we obtain
 $$ 
 \tilde{\lambda}{\bf Q}^*=\hat{A}{\bf Q}^*,
 $$
 where the matrix $\hat{A}$ is given by \eqref{A-hat}. Finally, since ${\bf Q}^*$ is nonnegative, \bk $Q_1^{*}+Q_2^{*}=\frac{1}{\sqrt{|\Omega|}}>0$ and the matrix $\hat{A}$ is cooperative and irreducible, it then follows from Perron-Frobenius theorem that $\tilde{\lambda}=$ $\tilde{\Lambda}$ is the maximal eigenvalue of the matrix $\hat{A}$. Finally, since $\tilde{\lambda}=$ $\tilde{\Lambda}$ is independent of the subsequence of $\mu$ that we chose, we conclude that $\lambda_p(\mu\circ\mathcal{K}+\mathcal{A})\to$ $\tilde{\Lambda}$ as $\mu\to\infty$.
 \end{proof}

 \section{Proofs of Proposition \ref{Main-prop1} and Theorem \ref{Th2-3} }\label{Sec5}

\noindent Next, we discuss the proofs of  Proposition \ref{Main-prop1} and Theorem \ref{Th2-3}. To this end, we first present the proof of Propostion \ref{Main-prop1}.

 \begin{proof}[Proof of Proposition \ref{Main-prop1}] Let $\xi>0$ and $h,l\in X^{+}$ be fixed.
   Note that the function
  \begin{equation}\label{L-equation}
  L(\nu):= \lambda_p\Big(\xi\mathcal{K}+\big(\frac{rs}{\nu+h}-l\big)\mathcal{I}\Big)-\nu\quad \quad  \ \nu>-h_{\min},
  \end{equation}
   is continuous, strictly decreasing and 
   $$ 
   \lim_{\nu\to\infty}L(\nu)=-\infty.
   $$
   Therefore, the algebraic equation $L(\nu)=0$ has a unique solution, say $\lambda^{h,l}_{\xi}$ if and only if 
   $$ 
   \lim_{\nu\to-h_{\min}^{+}}L(\nu)=\sup_{\nu>-h_{\min}}L(\nu)>0,
   $$
   which proves the first part of the proposition. Moreover,  when $h_{\min} > 0$, if $L(0)<0$ then $\lambda_{\xi}^{h,l}<0$; if $L(0)>0$ then $\lambda_{\xi}^{h,l}>0$; and if $L(0)=0$ then $\lambda_{\xi}^{h,l}=0$. Therefore, since $L(0)=\lambda(\xi\mathcal{K}+\frac{rs}{h}-l)$, then $\lambda_{\xi}^{h,l}$ and $\lambda(\xi\mathcal{K}+\frac{rs}{h}-l)$ have the same sign. 
 \end{proof}

\noindent  For every $\eta\in X$ and $\xi>0$, it follows as in  Theorem \eqref{TH4} (see also \cite{ BCV}) that the following sup-inf characterization of the principal spectrum point, $\lambda_p(\xi\mathcal{K}+\eta\mathcal{I})$,  
\begin{equation}\label{sup-inf-char}
\lambda_p\Big(\xi\mathcal{K}+\eta \mathcal{I}\Big)=\lambda_*\Big(\xi\mathcal{K}+\eta\mathcal{I}\Big)=\lambda^*\Big(\xi\mathcal{K}+\eta\mathcal{I}\Big),
\end{equation}
holds, where
  \begin{equation*}
    \lambda_*\Big(\xi\mathcal{K}+\eta\mathcal{I}\Big):=\sup\Big\{\lambda\in\mathbb{R} : \exists \psi\in X^{++}\ \text{satisfying}\ \lambda\psi\le \xi\mathcal{K}\psi +\eta\psi \Big\} 
  \end{equation*}
  and 
 \begin{equation*}
   \lambda^*\Big(\xi\mathcal{K}+\eta \mathcal{I}\Big):=\inf\Big\{\lambda\in\mathbb{R} : \exists \psi\in X^{++}\ \text{satisfying}\ \lambda\psi\ge \xi\mathcal{K}\psi +\eta\psi \Big\}. 
  \end{equation*}
 In particular, for every $\nu>-(a+s)_{\min}$, it follows as in  Theorem \eqref{TH4} that 
  \begin{equation}\label{WE-4}
      \lambda_p\Big(\mu_2\mathcal{K}+(\frac{rs}{\nu+a+s}-e)\mathcal{I}\Big)=\lambda_*\Big(\mu_2\mathcal{K}+(\frac{rs}{\nu+a+s}-e)\mathcal{I}\Big)=\lambda^*\Big(\mu_2\mathcal{K}+(\frac{rs}{\nu+a+s}-e)\mathcal{I}\Big),
  \end{equation}

  \begin{proof}[Proof of Theorem \ref{Th2-3} ] {\rm (i)} Let $\mu_2>0$ be fixed. First, suppose that $\lambda_{\mu_2}^{a+s,e}>-(a+s)_{\min}$. Therefore,
  \begin{equation*}
      \lambda_p\Big(\mu_2\mathcal{K}+\Big(\frac{rs}{\lambda_{\mu_2}^{a+s,e}+a+s}-e \Big)\mathcal{I}\Big)=\lambda_{\mu_2}^{a+s,e}.
  \end{equation*}
  Hence, by \eqref{WE-4} for every $\varepsilon>0$, there exist $\varphi_{2,\varepsilon}^{+}\in X^{++}$ and $\varphi_{2,\varepsilon}^{-}\in X^{++}$ such that 
  \begin{equation}\label{WE-5}
   (\lambda_{\mu_2}^{a+s,e}+\varepsilon)\varphi_{2,\varepsilon}^{+}\ge \mu_2\mathcal{K}\varphi_{2,\varepsilon}^{+}+\frac{rs}{\lambda_{\mu_2}^{a+s,e}+a+s}\varphi^{+}_{2,\varepsilon}-e\varphi_{2,\varepsilon}^{+}
  \end{equation}
  and 
   \begin{equation}\label{WE-6}
      (\lambda_{\mu_2}^{a+s,e}-\varepsilon)\varphi_{2,\varepsilon}^{-}\le \mu_2\mathcal{K}\varphi_{2,\varepsilon}^{-}+\frac{rs}{\lambda_{\mu_2}^{a+s,e}+a+s}\varphi^{-}_{2,\varepsilon}-e\varphi_{2,\varepsilon}^{-}.
  \end{equation}
  First, let $\varphi_{1,\varepsilon}^{+}:=\frac{r+\varepsilon}{\lambda_{\mu_2}^{a+s,e}+a+s}\varphi_{2,\varepsilon}^+$. Then, it follows from \eqref{WE-5} that 
  \begin{equation*}
      \Big(\lambda_{\mu_2}^{a+s,e}+\varepsilon\Big(1+\frac{s}{\lambda_{\mu_2}^{a+s,e}+a+s} \Big)\Big)\varphi_{2,\varepsilon}^{+} \ge \mu_2\mathcal{K}\varphi_{2,\varepsilon}^{+}+s\varphi^{+}_{1,\varepsilon}-e\varphi_{2,\varepsilon}^{+}
  \end{equation*}
  and 
  \begin{align*}
    \lambda_{\mu_2}^{a+s,e}\varphi_{1,\varepsilon}^{+}=& (r+\varepsilon)\varphi_{2,\varepsilon}^{+}-(a+s)\varphi_{1,\varepsilon}^{+}\cr 
    =& \mu_1\mathcal{K}\varphi_{1,\varepsilon}^{+}+r\varphi_{2,\varepsilon}^{+}-(a+s)\varphi_{1,\varepsilon}^{+}+  \Big(\varepsilon-\mu_1\frac{\mathcal{K}\varphi_{1,\varepsilon}^{+}}{\varphi_{2,\varepsilon}^{+}}\Big)\varphi_{2,\varepsilon}^{+}  \cr 
    \ge & \mu_1\mathcal{K}\varphi_{1,\varepsilon}^{+}+r\varphi_{2,\varepsilon}^{+}-(a+s)\varphi_{1,\varepsilon}^{+}
  \end{align*}
  whenever  $\mu_1<\frac{\varepsilon}{1+\Big\| \frac{\mathcal{K}\varphi_{1,\varepsilon}^{+}}{\varphi_{2,\varepsilon}^{+}}\Big\|_{\infty}}$. Therefore, by Theorem \ref{TH4}, we have that 
  \begin{equation}\label{WE-7}
      \lambda_p(\mu\circ\mathcal{K}+\mathcal{A})\le \lambda_{\mu_2}^{a+s,e}+\varepsilon\Big\|1+\frac{s}{\lambda_{\mu_2}^{a+s,e}+a+s}\Big\|_{\infty} \quad \text{whenever}\quad \mu_1<\frac{\varepsilon}{1+\Big\| \frac{\mathcal{K}\varphi_{1,\varepsilon}^{+}}{\varphi_{2,\varepsilon}^{+}}\Big\|_{\infty}}.
  \end{equation}
      Next, let $\varphi_{1,\varepsilon}^{-}:=\frac{r}{\lambda_{\mu_2}^{a+s,e}+a+s}\varphi_{2,\varepsilon}^{-}$. Then, by \eqref{WE-6}, 
      \begin{equation*}
          (\lambda_{\mu_2}^{a+s,e}-\varepsilon)\varphi_{2,\varepsilon}^{-}\leq \mu_2\mathcal{K}\varphi_{2,\varepsilon}^{-}+s\varphi^{-}_{1,\varepsilon}-e\varphi_{2,\varepsilon}^{-}
      \end{equation*}
      and 
      \begin{align*}
(\lambda_{\mu_2}^{a+s,e}-\varepsilon)\varphi_{1,\varepsilon}^{-}=&r\varphi_{2,\varepsilon}^{-} -(a+s)\varphi_{1,\varepsilon}^{-}-\varepsilon\varphi_{1,\varepsilon}^{-}\cr 
=&\mu_1\mathcal{K}\varphi_{1,\varepsilon}^{-}+r\varphi_{2,\varepsilon}^{-} -(a+s)\varphi_{1,\varepsilon}^{-} -(\varepsilon+\mu_1\mathcal{K})\varphi_{1,\varepsilon}^{-}\cr 
=&\mu_1\mathcal{K}\varphi_{1,\varepsilon}^{-}+r\varphi_{2,\varepsilon}^{-} -(a+s)\varphi_{1,\varepsilon}^{-} -(\varepsilon-\mu_1K(\cdot))\varphi_{1,\varepsilon}^{-}-\mu_1\int_{\Omega}\kappa(\cdot,y)\varphi_{1,\varepsilon}^{-}dy\cr 
\le &\mu_1\mathcal{K}\varphi_{1,\varepsilon}^{-}+r\varphi_{2,\varepsilon}^{-} -(a+s)\varphi_{1,\varepsilon}^{-} 
      \end{align*}
      whenever, $\mu_1<\frac{\varepsilon}{\|K\|_{\infty}}$.
      Therefore, 
      \begin{equation}\label{WE-8}
          \lambda_p(\mu\circ\mathcal{K}+\mathcal{A})\ge \lambda_{\mu_2}^{a+s,e}-\varepsilon\quad \text{whenever}\ 0<\mu_1<\frac{\varepsilon}{\|K\|_{\infty}}.
      \end{equation}
      By \eqref{WE-7} and \eqref{WE-8}, we obtain that $\lambda_p(\mu\circ\mathcal{K}+\mathcal{A})\to \lambda_{\mu_2}^{a+s,e}$ as $\mu_1\to 0^+$, which yields the desired result.
      
       Next, we suppose that $ \lambda_{\mu_2}^{a+s,e}=-(a+s)_{\min}$. Since the function $L(\nu)$ defined by \eqref{L-equation} is strictly decreasing,  $\lambda_p\Big(\mu_2\mathcal{K}+\Big(\frac{rs}{\varepsilon-(a+s)_{\min}+a+s}-e\Big)\Big)<\varepsilon-(a+s)_{\min}$ for every $\varepsilon>0$. Therefore, as in \eqref{WE-7}, we have that 
      \begin{equation*}
           \lambda_p(\mu\circ\mathcal{K}+\mathcal{A})\le \varepsilon-(a+s)_{\min}+\varepsilon\Big\|1+\frac{s}{\varepsilon-(a+s)_{\min}+a+s}\Big\|_{\infty} \quad \text{whenever}\ 0<\mu_1\ll 1,
  \end{equation*}
  which yields that 
  \begin{equation} \label{WE-9}   \limsup_{\mu_1\to0^+}\lambda_p(\mu\circ\mathcal{K}+\mathcal{A})\le- (a+s)_{\min}.
  \end{equation}
  On the other hand, for every $\mu_1>0$ and $\varepsilon>0$, it follows from Theorem \ref{TH4}, that there is $\varphi^{\varepsilon}=(\varphi^{\varepsilon}_{1},\varphi_{2}^{\varepsilon})\in X^{++}\times X^{++}$, such that 
  $$ 
(\lambda_p(\mu\circ\mathcal{K}+\mathcal{A})+\varepsilon)\varphi^{\varepsilon}\ge \mu\circ\mathcal{K}\varphi^{\varepsilon}+\mathcal{A}\varphi^{\varepsilon},
  $$
  which implies 
  $$ 
(\lambda_p(\mu\circ\mathcal{K}+\mathcal{A})+\varepsilon)\varphi^{\varepsilon}_1\ge \mu_1\mathcal{K}\varphi^{\varepsilon}_1+r\varphi_2^{\varepsilon}-(a+s)\varphi_1^{\varepsilon}\ge \mu_1\mathcal{K}\varphi^{\varepsilon}_1-(a+s)\varphi_1^{\varepsilon},
  $$
  since $r\varphi_2^{\varepsilon}\ge 0$. Therefore, 
  $$ 
  \lambda_p(\mu\circ\mathcal{K}+\mathcal{A})\ge \lambda_p(\mu_1\mathcal{K}-(a+s)\mathcal{I})
  $$
   since $\varepsilon>0$ is arbitrarily chosen and $\varphi_{1}^{\varepsilon}\in X^+$.  However, by \cite[Theorem 2.2 (iv)]{SX2015},
   $$ \lim_{\mu_1\to 0^+}\lambda_p(\mu_1\mathcal{K}-(a+s)\mathcal{I})=-(a+s)_{\min}. $$
   As a result, 
   \begin{equation*}  \liminf_{\mu_1\to0^+}\lambda_p(\mu\circ\mathcal{K}+\mathcal{A})\ge\liminf_{\mu_1\to0^+}\lambda_p(\mu_1\mathcal{K}-(a+s)\mathcal{I}) =-(a+s)_{\min},
   \end{equation*}
   which combined with \eqref{WE-9}  yield 
   $$   \lim_{\mu_1\to0^+}\lambda_p(\mu\circ\mathcal{K}+\mathcal{A})= -(a+s)_{\min}.$$
   This completes the proof of the result.

   \medskip

   \noindent {\rm (ii)} It follows by proper modification of the proof of {\rm (i)}.
  \end{proof}

   \section{Proofs of Proposition \ref{Main-pop2} and Theorem \ref{Th2-4} }\label{Sec6}

We first give a proof of Proposition \ref{Main-pop2} and one more intermediate result (see Lemma \ref{Lem-6} below).

  \begin{proof}[Proof of Proposition \ref{Main-pop2}]  Fix $\xi>0$, $q, z\in X^+\setminus\{0\}$ and $l,h\in X^+$. Since the resolvent function is locally analytic, then the function $(\lambda_p(\xi\mathcal{K}-l\mathcal{I}),\infty)\ni \nu\mapsto \Psi_{\xi,l}(\cdot,\nu,z)$ is smooth. Moreover, it follows from the resolvent identity that

\begin{equation}\label{Pl3-1}
      \frac{\partial \Psi_{\xi,l}(\cdot,\nu,z)}{\partial{\nu}}=-(\nu\mathcal{I}-(\xi\mathcal{K}-l\mathcal{I}))^{-2}(z)=-\Psi_{\xi,l}(\cdot,\nu,\Psi_{\xi,l}(\cdot,\nu,z)).
  \end{equation} 
  Thus the function 
  \begin{equation}\label{IL3}
      \tilde{\Psi}_{\xi,l}^{h,q,z}(\nu)=\int_{\Omega}q(x)\Psi_{\xi,l}(x,\nu,z)-\int_{\Omega}h-\nu|\Omega|,\quad \nu>\lambda_p(\xi\mathcal{K}-l\mathcal{I})
  \end{equation}
 is smooth with derivative function given by    \begin{equation}\label{Pl3-2}
      \frac{d\tilde{\Psi}_{\xi,l}^{h,q,z}(\nu)}{d\nu}=-\int_{\Omega}q(x)\Psi_{\xi,l}(x,\nu,\Psi_{\xi,l}(\cdot,\nu,z))dx-|\Omega|\quad \nu>\lambda_p(\xi\mathcal{K}-l\mathcal{I}).
  \end{equation}
  But, since $\xi\mathcal{K}-l\mathcal{I} $ generates a  strongly continuous and strongly positive  semigroup $\{e^{t(\xi\mathcal{K}-l(\cdot))\mathcal{I} )}\}_{t\ge 0}$, and $p\in X^{+}\setminus\{0\}$, then $\Psi_{\xi,l}(\cdot,\nu,p) \in X^{++}$, hence $\Psi_{\xi,l}(\cdot,\nu,\Psi_{\xi,l}(\cdot,\nu,z))\in X^{++}$. Thus, in view of the fact that $q\in X^{+}\setminus\{0\}$, it follows from \eqref{Pl3-2} that $ \frac{d\tilde{\Psi}_{\xi,l}^{h,q,z}(\nu)}{d\nu}<0$ for all $\nu>\lambda_p(\xi\mathcal{K}-l\mathcal{I})$. Furthermore,    \begin{equation}\label{Pl4-1}
    \lim_{\nu\to\infty}\tilde{\Psi}_{\xi,l}^{h,q,z}(\nu)=-\infty.
\end{equation} 
Indeed, an integration gives $$ 
\nu\int_{\Omega}\Psi_{\xi,l}(x,\nu,z)dx=-\int_{\Omega}l(x)\Psi_{\xi,l}(x,\nu,z)dx+\int_{\Omega}z(x)dx\le \int_{\Omega}z(x)dx\quad \forall\ \nu>\lambda_p(\xi\mathcal{K}-l(\cdot)\mathcal{I}).
$$ 
  Observing that $\lambda_p(\xi\mathcal{K}-l\mathcal{I})\le \lambda_p(\xi\mathcal{K})= 0$, then 
\begin{align*}
\tilde{\Psi}_{\xi,l}^{h,q,z}(\nu)\le & \|q\|_{\infty}\int_{\Omega}{\Psi}_{\xi,l}(x,\nu,z)dx-\int_{\Omega}h(x)dx-\nu|\Omega|\cr 
\le &\frac{\|q\|_{\infty}}{\nu}\int_{\Omega}z(x)dx-\int_{\Omega}h(x)dx-\nu|\Omega|,
\end{align*} 
for every   $\nu>0$. Hence \eqref{Pl4-1} holds   since the right hand side of the last inequality converges to negative infinity as $\nu$ approaches infinity. As a result, since $\tilde{\Psi}_{\xi,l}^{h,q,z}(\nu)$ is the expression at the right side of the equation \eqref{IL1}, therefore, the algebraic equation \eqref{IL1} has a (unique) root if and only if \eqref{IL2} holds. \eqref{Pl1} easily holds by inspection. Next, we proceed to prove assertions {\rm (i)} and {\rm (ii)}.

  \quad {\rm (i)} Suppose that $\lambda_p(\xi\mathcal{K}-l\mathcal{I})$ is an eigenvalue of $\xi\mathcal{K}-l\mathcal{I}$ with a positive function. We claim that  \begin{equation}\label{Pl4-2}
    \lim_{\nu\to\lambda_p^+(\xi\mathcal{K}-l\mathcal{I})}\tilde{\Psi}_{\xi,l}^{h,q,z}(\nu)=\infty.
\end{equation}
Indeed, if both \eqref{Pl4-1} and \eqref{Pl4-2} hold, then \eqref{IL2} holds, and by the intermediate value theorem and the strict monotonicity of $\tilde{\Psi}_{\xi,l}^{h,q,z} $ in $\nu$, there is a unique  $\nu>\lambda_p(\xi\mathcal{K}-l\mathcal{I})$ satisfying $\tilde{\Psi}_{\xi,l}^{h,q,z}(\nu)=0$. Now, we proceed to show that \eqref{Pl4-2}  hold. From \eqref{Pl6-2}, for every $\nu>\lambda_p(\xi\mathcal{K}-l\mathcal{I})$, we have 
\begin{align*}
    \Psi_{\xi,l}(\cdot,\nu,z)=&\int_0^1e^{-\nu t}e^{t(\xi\mathcal{K}-l\mathcal{I})}zdt+\int_{1}^{\infty}e^{-\nu t}e^{t(\xi\mathcal{K}-l\mathcal{I})}zdt\cr 
    =& \int_0^1e^{-\nu t}e^{t(\xi\mathcal{K}-l\mathcal{I})}zdt+e^{-\nu}\int_{0}^{\infty}e^{-\nu t}e^{t(\xi\mathcal{K}-l\mathcal{I})}e^{\mu_2\mathcal{K}-e\mathcal{I}}zdt\cr 
     =& \int_0^1e^{-\nu t}e^{t(\xi\mathcal{K}-l\mathcal{I})}zdt+e^{-\nu}\Psi_{\xi,l}(\cdot,\nu,\tilde{z}).
\end{align*}
where $\tilde{z}:=e^{(\xi\mathcal{K}-l\mathcal{I})}z$. Thus, since $\{e^{t(\xi\mathcal{K}-l\mathcal{I})}\}_{t\ge 0} $ is strongly positive and $z\in X^{+}\setminus\{0\}$,  $\tilde{z}\in X^{++}$; that is  $\tilde{z}_{\min}>0$, and   
\begin{equation*}
    \Psi_{\xi,l}(\cdot,\nu,z)\ge e^{-\nu}\Psi_{\xi,l}(\cdot,\nu,\tilde{z})\ge \tilde{z}_{\min}e^{-\nu}\Psi_{\xi,l}(\cdot,\nu,1).
\end{equation*} 
Hence,
\begin{equation}\label{Pl6-3}
    \tilde{\Psi}_{\xi,l}^{h,q,z}(\nu)=\int_{\Omega}q(x)\Psi_{\xi,l}(x,\nu,z)dx-\int_{\Omega}h-\nu|\Omega|\ge \tilde{z}_{\min}e^{-\nu}\int_{\Omega}q(x)\Psi_{\xi,l}(x,\nu,1)dx -\int_{\Omega}h-\nu|\Omega|.
\end{equation}
Now, let $\psi\in X^{++}$ be an eigenfunction of $\lambda_p(\xi\mathcal{K}-l\mathcal{I})$ satisfying $\psi_{\max}=1$. Then 
$$ 
e^{t(\xi\mathcal{K}-l\mathcal{I})}\psi=e^{t\lambda_p(\xi\mathcal{K}-l\mathcal{I})}\psi\quad \forall\ t>0.
$$
Hence, since $0<\psi\le 1$ and $\{e^{t(\xi\mathcal{K}-l\mathcal{I})}\}_{t\ge }$ is positive, we obtain that 
$$
\Psi_{\xi,l}(\cdot,\nu,1)=\int_{0}^{\infty}e^{-\nu t}e^{t(\xi\mathcal{K}-l\mathcal{I})}(t)dt\ge \int_0^{\infty}e^{-(\nu-\lambda_p(\xi\mathcal{K}-l\mathcal{I}))}\psi=\frac{1}{\nu-\lambda_p(\xi\mathcal{K}-e\mathcal{I})}\psi(\cdot)\quad \forall\ \nu>\lambda_p(\xi\mathcal{K}-l\mathcal{I}).
$$
This implies that 
$$
\int_{\Omega}q(x)\Psi_{\xi,l}(x,\nu,1)dx\ge \frac{1}{\nu-\lambda_p(\xi\mathcal{K}-l\mathcal{I})}\int_{\Omega}q(x)\psi(x)dx\to \infty\quad \text{as}\ \nu\to\lambda_p^+(\xi\mathcal{K}-l\mathcal{I}).
$$
(Note that we have used the fact that $\int_{\Omega}q(x)\psi(x)dx>0$ since $\psi\in X^{++}$ and $q\in X^{+}\setminus\{0\}$.) This together with  inequality \eqref{Pl6-3} implies that  \eqref{Pl4-2} holds. Clearly, by \eqref{Pl4-2}, we have that \eqref{IL2} holds.

\quad {\rm (ii)} Suppose that $\lambda_p(\xi\mathcal{K}-l\mathcal{I})<0$ and \eqref{IL2} holds. Then, $0\in (\lambda_p(\xi\mathcal{K}-l\mathcal{I}),\infty)$ and $\tilde{\Psi}_{\xi,l}^{h,q,z}(\tilde{\lambda}_{q,\xi,z}^{h,l})=0$. Therefore, since $\tilde{\Psi}_{\xi,l}^{h,q,z}$ is strictly decreasing, then  $\tilde{\lambda}_{q,\xi,z}^{h,l} $ and $\tilde{\Psi}_{\xi,l}^{h,q,z}(0)=\sigma_{q,\xi,z}^{h,l} $ have the same sign.

  \end{proof}

\noindent   For future use, we set
 \begin{equation}
     (\Psi_{\mu_2,e}(\cdot,\nu,s\mathcal{I})r)w(x)=\Psi_{\mu_2,e}(x,\nu,sw)r(x)\quad \forall\ w\in X, \ x\in\overline{\Omega}, \quad \text{and}\ \nu>\lambda_p(\mu_2\mathcal{K}-e\mathcal{I}).
 \end{equation}
 The following result will also be needed to complete the proof of Theorem \ref{Th2-4}.

  \begin{lemma}\label{Lem-6}  Let $\nu>\lambda_p(\mu_2\mathcal{K}-e\mathcal{I})$ be fixed, so that the bounded linear map $\nu\mathcal{I}-(\mu_2\mathcal{K}-e\mathcal{I})$ is invertible and let  $\Psi_{\mu_2,e}(\cdot,\nu,\cdot)$ be given by \eqref{Pl6-2}. For every $\mu_1>0$, let $ 
\lambda_p(\mu_1\mathcal{K}+\Psi_{\mu_2,e}(\cdot,\nu,s\mathcal{I})r-(a+s)\mathcal{I})$ 
      denote the principal spectrum point of the bounded linear operator $\mu_1\mathcal{K}+\Psi_{\mu_2,e}(\cdot,\nu;s\mathcal{I})r-(a+s)\mathcal{I}$ on $X$. Then there is $\mu_1^*\gg 0$, 
 such that for every $\mu_1>\mu_1^{*}$, $\lambda_p(\mu_1\mathcal{K}+\Psi_{\mu_2,e}(\cdot,\nu;s\mathcal{I})r-(a+s)\mathcal{I})$ is a geometrically simple eigenvalue whose eigenspace is spanned by a strictly positive eigenfunction. Furthermore, 
      \begin{equation}  \label{Pl9-1}
      \lim_{\mu_1\to\infty}\lambda_p(\mu_1\mathcal{K}+\Psi_{\mu_2,e}(\cdot,\nu,s\mathcal{I})r-(a+s)\mathcal{I})
          =\frac{1}{|\Omega|}\left(\int_{\Omega}\Psi_{\mu_2,e}(\cdot,\nu,s)rdx-\int_{\Omega}(a+s)dx\right).
      \end{equation}
      In particular,  if  \eqref{IL2} holds for the choices of $\xi=\mu_2$, $q=r$, $ z=s$, $l=e$ and $h=a+s$, then  $\nu:=\tilde{\lambda}_{r,\mu_2,s}^{a+s,e}$, the unique zero of \eqref{IL1}, satisfies 
      \begin{equation}\label{Pl9-9}
\lim_{\mu_1\to\infty}\lambda_p(\mu_1\mathcal{K}+\Psi_{\mu_2,e}(\cdot,\tilde{\lambda}_{r,\mu_2,s}^{a+s,e},s\mathcal{I})r-(a+s)\mathcal{I})
          =\tilde{\lambda}_{r,\mu_2,s}^{a+s,e}.
      \end{equation}
      
  \end{lemma}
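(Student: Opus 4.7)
The plan is to view $\mathcal{B}_{\mu_1}:=\mu_1\mathcal{K}+r\,\Psi_{\mu_2,e}(\cdot,\nu,s\mathcal{I})-(a+s)\mathcal{I}$ as a compact perturbation of a single-species nonlocal-dispersal operator and then apply the averaging technique from the proof of Theorem~\ref{Th2-1}{\rm (ii)}. Using $\big((\nu+e+\mu_2 K)\mathcal{I}-\mu_2\int_\Omega\kappa(\cdot,y)\cdot dy\big)\Psi_{\mu_2,e}(\cdot,\nu,sw)=sw$ and iterating once to strip off the pointwise-multiplication factor, I obtain
$$ r\,\Psi_{\mu_2,e}(\cdot,\nu,sw)=\frac{rs}{\nu+e+\mu_2 K}\,w+\mathcal{C}w, $$
where $\mathcal{C}$ is compact on $X$ (its kernel inherits compactness from $\int\kappa(\cdot,y)\cdot dy$). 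Setting $m_\nu:=\frac{rs}{\nu+e+\mu_2 K}-(a+s)$, this recasts $\mathcal{B}_{\mu_1}=\mu_1\mathcal{K}+m_\nu\mathcal{I}+\mathcal{C}$. The sup--inf characterization of $\lambda_p(\mathcal{B}_{\mu_1})$ (the proof of Theorem~\ref{TH4} transfers verbatim to any bounded generator of a strongly positive semigroup) tested against $\psi\equiv{\bf 1}$ gives, since $\mathcal{K}{\bf 1}=0$, the $\mu_1$-uniform bound $\lambda_p(\mathcal{B}_{\mu_1})\le \max_{x\in\overline{\Omega}}[r(x)\Psi_{\mu_2,e}(x,\nu,s)-(a+s)(x)]$. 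By Weyl's theorem, $\sigma_{\mathrm{ess}}(\mathcal{B}_{\mu_1})=\sigma_{\mathrm{ess}}((m_\nu-\mu_1 K)\mathcal{I})$, the range of $m_\nu-\mu_1 K$, which lies in $(-\infty,m_{\nu,\max}-\mu_1 K_{\min}]$ and drops below the uniform upper bound as soon as $\mu_1$ exceeds some $\mu_1^*$. Hence $\lambda_p(\mathcal{B}_{\mu_1})$ is an isolated spectral point of finite algebraic multiplicity; strong positivity of the semigroup generated by $\mathcal{B}_{\mu_1}$ (inherited from the positivity of $\mu_1\mathcal{K}$, $r\Psi_{\mu_2,e}(\cdot,\nu,s\mathcal{I})$, and of multiplication by $r,s$), combined with Krein--Rutman, yields geometric simplicity and a strictly positive eigenfunction.

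For \eqref{Pl9-1}, I would let $\varphi^{\mu_1}\in X^{++}$ denote the principal eigenfunction normalized by $\|\varphi^{\mu_1}\|_{L^2(\Omega)}=1$. Integrating the eigenvalue equation $\lambda_p(\mathcal{B}_{\mu_1})\varphi^{\mu_1}=\mathcal{B}_{\mu_1}\varphi^{\mu_1}$ over $\Omega$ and invoking \eqref{D-Eq9} gives
$$ \lambda_p(\mathcal{B}_{\mu_1})\int_{\Omega}\varphi^{\mu_1}=\int_{\Omega}r\,\Psi_{\mu_2,e}(\cdot,\nu,s\varphi^{\mu_1})\,dx-\int_{\Omega}(a+s)\varphi^{\mu_1}\,dx. $$
Testing the same equation against $\varphi^{\mu_1}$ and exploiting the symmetry of $\kappa$ produces the Dirichlet form $\frac{\mu_1}{2}\int_\Omega\int_\Omega\kappa(x,y)(\varphi^{\mu_1}(y)-\varphi^{\mu_1}(x))^2\,dy\,dx$, which by \eqref{D-Eq4} bounds $\|\varphi^{\mu_1}-\overline{\varphi}^{\mu_1}\|_{L^2}^2\le C/\mu_1$, where $\overline{\varphi}^{\mu_1}:=|\Omega|^{-1}\int_\Omega\varphi^{\mu_1}$ and $C$ is independent of $\mu_1$ (since $\lambda_p(\mathcal{B}_{\mu_1})$ is uniformly bounded and $w\mapsto\Psi_{\mu_2,e}(\cdot,\nu,sw)$ is $L^2$-bounded for $\nu$ in the $L^2$-resolvent set of $\mu_2\mathcal{K}-e\mathcal{I}$, which one arranges by enlarging $\mu_1^*$ if necessary). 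Along a subsequence, $\lambda_p(\mathcal{B}_{\mu_1})\to\tilde\lambda$ and $\varphi^{\mu_1}\to\overline{\varphi}^*=|\Omega|^{-1/2}>0$ in $L^2$. By linearity and $L^2$-continuity of $w\mapsto\Psi_{\mu_2,e}(\cdot,\nu,sw)$, passing to the limit in the averaged equation and dividing by $\overline{\varphi}^*$ yields \eqref{Pl9-1}; subsequence-independence of the right-hand side promotes this to full convergence.

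The final claim \eqref{Pl9-9} is immediate: substituting $\nu=\tilde\lambda_{r,\mu_2,s}^{a+s,e}$ into \eqref{Pl9-1} and invoking the defining identity \eqref{IL1}---which for $q=r$, $z=s$, $l=e$, $h=a+s$ rearranges to $|\Omega|^{-1}\big(\int_{\Omega}r\,\Psi_{\mu_2,e}(\cdot,\tilde\lambda_{r,\mu_2,s}^{a+s,e},s)\,dx-\int_{\Omega}(a+s)\big)=\tilde\lambda_{r,\mu_2,s}^{a+s,e}$---returns exactly the desired equality. The main obstacle will be the essential-spectrum/Krein--Rutman step: because $r\,\Psi_{\mu_2,e}(\cdot,\nu,s\mathcal{I})$ is nonlocal, the single-species criterion \cite[Theorem~2.1]{BS} used in Lemma~\ref{Lem-5} does not apply directly, and isolating the multiplicative factor $\frac{rs}{\nu+e+\mu_2 K}$ before applying Weyl's theorem is the key device that reduces the problem to the standard framework.
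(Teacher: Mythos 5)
Your overall strategy is the same as the paper's: for the first claim, compare $\lambda_p(\mathcal{B}_{\mu_1})$ against the top of the essential spectrum as $\mu_1\to\infty$; for \eqref{Pl9-1}, normalize the principal eigenfunction in $L^2$, average the eigen-equation, and use the Poincar\'e-type inequality \eqref{D-Eq4} to force $\varphi^{\mu_1}$ to concentrate on constants; and \eqref{Pl9-9} follows by plugging $\nu=\tilde{\lambda}_{r,\mu_2,s}^{a+s,e}$ into \eqref{Pl9-1} and using \eqref{Pl1}. Where you differ is the first step: you explicitly decompose $r\Psi_{\mu_2,e}(\cdot,\nu,s\mathcal{I})=\tfrac{rs}{\nu+e+\mu_2 K}\mathcal{I}+\mathcal{C}$ with $\mathcal{C}$ compact, then invoke Weyl's theorem to identify $\sigma_{\mathrm{ess}}(\mathcal{B}_{\mu_1})$ with the range of $m_\nu-\mu_1 K$. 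The paper instead splits only $\mu_1\mathcal{K}$ into its compact part $\mu_1\int_\Omega\kappa(\cdot,y)\,\cdot\,dy$ plus $-\mu_1 K\mathcal{I}$, compares $\lambda_p(\mathcal{B}_{\mu_1})$ with $\lambda_p(-\mu_1K\mathcal{I}+\Psi_{\mu_2,e}(\cdot,\nu,s\mathcal{I})r-(a+s)\mathcal{I})$, and delegates the rest to \cite[Proposition 3.8]{SX2015}. Your decomposition is arguably cleaner since it reduces things honestly to a multiplication operator, whereas the operator the paper bounds is not itself a multiplication operator and only ``similar arguments'' are invoked.

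However, there is a genuine logical slip in your argument that $\lambda_p(\mathcal{B}_{\mu_1})$ is isolated. Testing against $\psi\equiv{\bf 1}$, you record only the $\mu_1$-uniform \emph{upper} bound $\lambda_p(\mathcal{B}_{\mu_1})\le\max_x\bigl[r\Psi_{\mu_2,e}(\cdot,\nu,s)-(a+s)\bigr]$ and then say that $\sup\sigma_{\mathrm{ess}}(\mathcal{B}_{\mu_1})\le m_{\nu,\max}-\mu_1 K_{\min}$ eventually drops below this upper bound. That inference does not give $\lambda_p(\mathcal{B}_{\mu_1})>\sup\sigma_{\mathrm{ess}}(\mathcal{B}_{\mu_1})$, since $\lambda_p$ could in principle also fall with $\mu_1$. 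What you need (and what the paper establishes) is a $\mu_1$-uniform \emph{lower} bound: the same test function $\psi\equiv{\bf 1}$, the nonnegativity of $r$, $s$, and the positivity of the resolvent give $\mathcal{B}_{\mu_1}({\bf 1})(x)=r(x)\Psi_{\mu_2,e}(x,\nu,s)-(a(x)+s(x))\ge -\|a+s\|_{\infty}$, so $\lambda_p(\mathcal{B}_{\mu_1})\ge -\|a+s\|_{\infty}$ for every $\mu_1>0$; once $m_{\nu,\max}-\mu_1 K_{\min}<-\|a+s\|_{\infty}$, the principal spectrum point lies strictly above the essential spectrum and the Krein--Rutman machinery applies. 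The fix is one line, but as written the step fails. A smaller infelicity: you say the $L^2$-boundedness of $w\mapsto\Psi_{\mu_2,e}(\cdot,\nu,sw)$ ``one arranges by enlarging $\mu_1^*$''---but $\nu$ and $\Psi_{\mu_2,e}$ do not depend on $\mu_1$ at all; that boundedness is already guaranteed by the hypothesis $\nu>\lambda_p(\mu_2\mathcal{K}-e\mathcal{I})$ together with the symmetry of $\kappa$ (which is how the paper estimates $\|W_{\mu_1}\|_{L^2}$ via the variational formula \eqref{Pl9-2} with constant $(\nu-\lambda_p(\mu_2\mathcal{K}-e\mathcal{I}))^{-1}$).
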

  \begin{proof} Let $\nu>\lambda_p(\mu_2\mathcal{K}-e(\cdot)\mathcal{I})$ be fixed. Taking $w\equiv 1$, for every $\mu_1>0$, we have 
  \begin{align*}
      \mu_1\mathcal{K}(w)(x)+ \Psi_{\mu_2,e} (x,\nu,sw)r(x)-(a(x)+s(x))w(x)& = \Psi_{\mu_2,e}(x,\nu,sw)r(x)-(a(x)+s(x))w(x)\cr 
      &\ge  -\|a+s\|_{\infty}w(x) \quad \forall\ x\in\overline{\Omega},
  \end{align*}
  since $r\ge 0$, $s\ge 0$ and $\Psi_{\mu_2,e}(\cdot,\nu,\cdot)$ is positive. Thus 
  \begin{equation}\label{Pl8-1}
   \lambda_p(\mu_1\mathcal{K}+\Psi_{\mu_2,e}(\cdot,\nu,s\mathcal{I})r-(a+s)\mathcal{I})   \ge  -\|a+s\|_{\infty}\quad\forall\ \mu_1>0.
  \end{equation}
  Recall that 
  \begin{align}\label{Pl8-2}
     &((\mu_1\mathcal{K}+\Psi_{\mu_2,e}(\cdot,\nu,s\mathcal{I})r-(a+s)\mathcal{I})w)(x)\cr =&\mu_1\int_{\Omega}\kappa(x,y)w(y)dy -\mu_1K(x)w(x)+ \Psi_{\mu_2,e}(x,\nu;sw)r(x)-(a(x)+s(x))w(x)\quad \forall\ w\in X, \ x\in\overline{\Omega}.\cr
  \end{align}
  Observe also that, with $w\equiv 1$, 
  \begin{align*}
     &-\mu_1K(x)w(x)+\Psi_{\mu_2,e}(x,\nu;sw)r(x)-(a(x)+s(x))w(x)\cr
     \le &-\mu_1K_{\min}+ \Psi_{\mu_2,e}(x,\nu,s)r(x)-(a(x)+s(x))\cr 
     \le& (-\mu_1K_{\min}+\|\Psi_{\mu_2,e}(\cdot,\nu,s)r\|_{\infty}-(a+s)_{\min})w(x)\quad \forall\ x\in\overline{\Omega}, \ \mu_1>0.
  \end{align*}
  Hence, for every $\mu_1>0$, it holds that
  $$ 
  \lambda_p(-\mu_1K\mathcal{I}+\Psi_{\mu_2,e}(\cdot,\nu;s\mathcal{I})r-(a+s)\mathcal{I})\le -\mu_1K_{\min}+\|\Psi_{\mu_2,e}(\cdot,\nu,s)r\|_{\infty}-(a+s)_{\min}.
  $$
  Therefore, in view of \eqref{Pl8-1}, we have that 
$$ 
   \lambda_p(\mu_1\mathcal{K}+\Psi_{\mu_2,e}(\cdot,\nu,s\mathcal{I})r-(a+s)\mathcal{I})   >\lambda_p(-\mu_1K\mathcal{I}+\Psi_{\mu_2,e}(\cdot,\nu;s\mathcal{I})r-(a+s)\mathcal{I})
$$
  whenever 
  $$ \mu_1> \mu_1^*:=\frac{\|\Psi_{\mu_2,e}(\cdot,\nu,s)\|_{\infty}+\|a+s\|_{\infty}-(a+s)_{\min}}{K_{\min}}. $$ 
  Hence, since the mapping  
  $$X\ni w\mapsto \mu_1\int_{\Omega}\kappa(\cdot,y)w(y)dy\in X, $$
  is strictly positive and compact, and  $\mu_1\mathcal{K}+\Psi_{\mu_2,e}(\cdot,\nu,s\mathcal{I})r-(a+s)\mathcal{I}$ is a compact perturbation of $\Psi_{\mu_2,e}(\cdot,\nu,s\mathcal{I})r-(a+s)\mathcal{I}$, it follows by similar arguments as in the proof \cite[Proposition 3.8]{SX2015} that $\lambda_p(\mu_1\mathcal{K}+\Psi_{\mu_2,e}(\cdot,\nu,s\mathcal{I})r-(a+s)\mathcal{I})$ is the principal eigenvalue of $\mu_1\mathcal{K}+\Psi_{\mu_2,e}(\cdot,\nu,s\mathcal{I})r-(a+s)\mathcal{I}$ whose eigenspace is spanned by a strictly positive eigenfunction in $X^{++}$ for every $\mu_1>\mu_1^*$. Now, it remains to show that \eqref{Pl9-1} holds.

  For every $\mu_1>\mu_1^*$, let $\psi^{\mu_1}$ be the positive  principal eigenfunction of  $\lambda_p(\mu_1\mathcal{K}+\Psi_{\mu_2,e}(\cdot,\nu;s\mathcal{I})r-(a+s)\mathcal{I})$  satisfying $\|\psi^{\mu_1}\|_{L^2(\Omega)} = 1$. It follows by computations similar to that leading to inequality \eqref{HK24-2-1}  that 
  \begin{equation*}      \lim_{\mu_1\to\infty}\Big\|\psi^{\mu_1}-\frac{1}{|\Omega|}\int_{\Omega}\psi^{\mu_1}\Big\|_{L^2(\Omega)}=0.
  \end{equation*}
  which together with the fact that $\|\psi^{\mu_1}\|_{L^2(\Omega)}=1$ for every $\mu_1>\mu_1^*$ yield 
  \begin{equation}\label{Pl9-4}
      \lim_{\mu_1\to\infty}\frac{1}{|\Omega|}\int_{\Omega}\psi^{\mu_1}dx=\frac{1}{\sqrt{|\Omega|}} \quad \text{and}\quad \lim_{\mu_1\to\infty}\Big\|\psi^{\mu_1}-\frac{1}{\sqrt{|\Omega|}}\Big\|_{L^2(\Omega)}=0.
  \end{equation}
  Now, integrating the equation satisfied by $\psi^{\mu_1}$ on $\Omega$ yield
  \begin{equation}\label{Pl9-6}
      \lambda_p(\mu_1\mathcal{K}+\Psi_{\mu_2,e}(\cdot,\nu;s\mathcal{I})r-(a+s)\mathcal{I})=\frac{\int_{\Omega}\Psi_{\mu_2,e}(x,\nu,s\psi^{\mu_1})r(x)dx-\int_{\Omega}(a(x)+s(x))\psi^{\mu_1}dx}{\int_{\Omega}\psi^{\mu_1}(x)dx}.
  \end{equation}
  for every $\mu_1>\mu_1^*$. Note that, since $\kappa$ is symmetric, it follows from \cite[Proposition 3.9]{SX2015} that 
  \begin{equation}\label{Pl9-2}
      \lambda_p(\mu_2\mathcal{K}-e\mathcal{I})=\sup_{w\in L^2(\Omega)\setminus\{0\}}\frac{\int_{\Omega}\int_{\Omega}\mu_2\kappa(x,y)[w(y)-w(x)]w(x)dydx-\int_{\Omega}e(x)w^2(x)dx}{\int_{\Omega}w^2dx}.
  \end{equation}
  Observe that, for every $\mu_1>\mu_1^*,$ the function  
  $$W_{\mu_1}(\cdot):=\Psi_{\mu_2,e}(\cdot,\nu,s\psi^{\mu_1})-\frac{1}{\sqrt{|\Omega|}}\Psi_{\mu_2,e}(\cdot,\nu,s)=\Psi_{\mu_2,e}\Big(\cdot,\nu,\Big(\psi^{\mu_1}-\frac{1}{\sqrt{|\Omega|}}\Big)s\Big)$$
  satisfies,
  \begin{equation*}
      \nu W_{\mu_1}=\mu_2\mathcal{K}W_{\mu_1}-eW_{\mu_1}+\Big(\psi^{\mu_1}-\frac{1}{\sqrt{|\Omega|}}\Big)s
  \end{equation*}
  Multiplying this equation by $W_{\mu_1}$ and integrating the resulting equation on $\Omega$ and recalling \eqref{Pl9-2}, we obtain 
  $$ 
  \nu\int_{\Omega}W^2_{\mu_1}(x)dx\leq \lambda_p(\mu_2\mathcal{K}-e\mathcal{I})\int_{\Omega}W_{\mu_1}^2(x)dx+\int_{\Omega}\Big(\psi^{\mu_1}-\frac{1}{\sqrt{|\Omega|}}\Big)s(x)W_{\mu_1}(x)dx,
  $$
equivalently,
$$ 
\Big(\nu-\lambda_p(\mu_2\mathcal{K}-e\mathcal{I})\Big)\|W_{\mu_1}\|_{L^2(\Omega)}^{2}\le \int_{\Omega}\Big(\psi^{\mu_1}-\frac{1}{\sqrt{|\Omega|}}\Big)s(x)W_{\mu_1}(x)dx.
$$
  Therefore, since $\nu>\lambda_p(\mu_2\mathcal{K}-e\mathcal{I})$, applying H\"older's inequality to the right hand side of the last inequality, we obtain 
  
  \begin{align*}
\left\| \Psi_{\mu_2,e}(\cdot,\nu,s\psi^{\mu_1})-\frac{1}{\sqrt{|\Omega|}}\Psi_{\mu_2,e}(\cdot,\nu,s)\right\|_{L^2(\Omega)}\le& \frac{\Big\|\Big(\psi^{\mu_1}-\frac{1}{\sqrt{|\Omega|}}\Big)s\Big\|_{L^2(\Omega)}
 }{\nu-\lambda_p(\mu_2\mathcal{K}-e\mathcal{I})} \cr
 \leq & \frac{\|s\|_{\infty}\Big\|\psi^{\mu_1}-\frac{1}{\sqrt{|\Omega|}}\Big\|_{L^2(\Omega)}
 }{\nu-\lambda_p(\mu_2\mathcal{K}-e\mathcal{I})} .
 \end{align*}
 As a consequence, we deduce from \eqref{Pl9-4} that 
 $$ 
 \left\| \Psi_{\mu_2,e}(\cdot,\nu,s\psi^{\mu_1})-\frac{1}{\sqrt{|\Omega|}}\Psi_{\mu_2,e}(\cdot,\nu,s)\right\|_{L^2(\Omega)}\to 0 \ \text{as}\ \mu_1\to\infty
. $$
Hence
$$ 
\lim_{\mu_1\to\infty}\int_{\Omega}\Psi_{\mu_2,e}(x,\nu,s\psi^{\mu_1})r(x)dx=\frac{1}{\sqrt{|\Omega|}}\int_{\Omega}\Psi_{\mu_2,e}(x,\nu,s)r(x)dx.
$$
Note also from \eqref{Pl9-4} that 
$$ 
\lim_{\mu_1\to\infty}\int_{\Omega}(a(x)+s(x))\psi^{\mu_1}(x)dx=\frac{1}{\sqrt{|\Omega|}}\int_{\Omega}(a(x)+s(x))dx \quad \text{and}\quad \lim_{\mu_1\to\infty}\int_{\Omega}\psi^{\mu_1}dx=\frac{
|\Omega|}{\sqrt{|\Omega|}}.
$$
Therefore, letting $\mu_1\to\infty$ in \eqref{Pl9-6} we obtain \eqref{Pl9-1}. Finally,  if \eqref{IL2} holds for $\xi=\mu_2$, $q=r$, $z=s$, $l=e$, and $h=a+s$, and  $\nu=\tilde{\lambda}^{a+s,e}_{r,\mu_2,s}$, then \eqref{Pl9-9} holds  by \eqref{Pl1} and \eqref{Pl9-1}. This completes the proof of the lemma.
  \end{proof}

\noindent  Thanks to Lemma \ref{Lem-6} and Proposition \ref{Main-pop2}, we can now present the proof of Theorem \ref{Th2-4}.

  \begin{proof}[Proof of Theorem \ref{Th2-4}] We only present the proof of {\rm (i)} since {\rm (ii)} can be proved by similar arguments.  Let $\xi=\mu_2$, $q=r$, $z=s$, $l=e$, and $h=a+s$. We distinguish two cases. 

  \medskip

\noindent{\bf Case 1.} In this case, we suppose that \eqref{IL2} holds. Let $\tilde{\lambda}^{a+s,e}_{r,\mu_2,s}>\lambda_{p}(\mu_2\mathcal{K}-e\mathcal{I})$ be the unique zero of \eqref{IL1}.   By Lemma \ref{Lem-6}, there is $\mu_1^*\gg 1$ such that $ \lambda_p(\mu_1\mathcal{K}+\Psi_{\mu_2,e}(\cdot,\tilde{\lambda}^{a+s,e}_{r,\mu_2,s},s\mathcal{I})r-(a+s)\mathcal{I})$ is the principal eigenvalue of $\mu_1\mathcal{K}+\Psi_{\mu_2,e}(\cdot,\tilde{\lambda}^{a+s,e}_{r,\mu_2,s},s\mathcal{I})r-(a+s)\mathcal{I}$ with a strictly positive eigenfunction $\varphi_1^{\mu_1}$. Set $\varphi_2^{\mu_2}=\Psi_{\mu_2,e}(\cdot,\tilde{\lambda}^{a+s,e}_{r,\mu_2,s},s\varphi_1^{\mu_1})$. Then, for every $\mu_1>\mu_1^*$, $(\varphi_1^{\mu_1},\varphi_{2}^{\mu_2})\in X^{++}\times X^{++}$ and satisfies 
  \begin{equation*}
      \begin{cases}
          \lambda_p(\mu_1\mathcal{K}+\Psi_{\mu_2,e}(\cdot,\tilde{\lambda}^{a+s,e}_{r,\mu_2,s},s\mathcal{I})r-(a+s)\mathcal{I})\varphi_1^{\mu_1}=\mu_1\mathcal{K}\varphi_{1}^{\mu_1}+r\varphi_{2}^{\mu_2}-(a+s)\varphi_{1}^{\mu_1}\cr 
          \tilde{\lambda}^{a+s,e}_{r,\mu_2,s}\varphi_2^{\mu_2}=\mu_2\mathcal{K}\varphi_2^{\mu_2}+s\varphi_{1}^{\mu_1}-e\varphi_{2}^{\mu_2}.
      \end{cases}
  \end{equation*}
  Hence, it follows from Theorem \ref{TH4} that 
  \begin{align*}
 & \min\big\{\lambda_p(\mu_1\mathcal{K}+\Psi_{\mu_2,e}(\cdot,\tilde{\lambda}^{a+s,e}_{r,\mu_2,s},s\mathcal{I})r-(a+s)\mathcal{I}),\tilde{\lambda}^{a+s,e}_{r,\mu_2,s}\big\}\cr 
 \le & \lambda_p(\mu\circ\mathcal{K}+\mathcal{A})
 \le  \max\big\{\lambda_p(\mu_1\mathcal{K}+\Psi_{\mu_2,e}(\cdot,\tilde{\lambda}^{a+s,e}_{r,\mu_2,s},s\mathcal{I})r-(a+s)\mathcal{I}),\tilde{\lambda}^{a+s,e}_{r,\mu_2,s}\big\}
  \end{align*}
     for every $\mu_1>\mu_1^*$.  Therefore, letting $\mu_1\to\infty$ in the last inequalities and recalling \eqref{Pl9-9}, we obtain that 
$$
\lim_{\mu_1\to\infty}\lambda_p(\mu\circ\mathcal{K}+\mathcal{A})=\tilde{\lambda}^{a+s,e}_{r,\mu_2,s}, 
$$
which yields the desired result.

\medskip

\noindent{\bf Case 2}. Here we suppose that \eqref{IL2}. Hence, $\tilde{\Psi}_{\mu_2,e}^{a+e,r,s}(\nu)<0$ for every $\nu>\lambda_p(\mu_2\mathcal{K}-e\mathcal{I})$,  where $\tilde{\Psi}_{\mu_2,e}^{a+e,r,s}$ is defined by \eqref{IL3}. This together with \eqref{Pl9-1} implies  that for every $\nu>\lambda_{p}(\mu_2\mathcal{K}-e\mathcal{I})$, 
\begin{equation}  \lim_{\mu_1\to\infty}\lambda_p(\mu_1\mathcal{K}+\Psi_{\mu_2,e}(\cdot,\nu,s\mathcal{I})r-(a+s)\mathcal{I})
          =\frac{1}{|\Omega|}\left(\int_{\Omega}\Psi_{\mu_2,e}(\cdot,\nu,s)rdx-\int_{\Omega}(a+s)dx\right)<  \nu
\end{equation}
Therefore, for every $\nu>\lambda_{p}(\mu_2\mathcal{K}-e\mathcal{I})$, there is $\mu_1^{1,\nu}>0$ such that

\begin{equation}\label{IL4}
 \lambda_p(\mu_1\mathcal{K}+\Psi_{\mu_2,e}(\cdot,\nu,s\mathcal{I})r-(a+s)\mathcal{I})<\nu \quad \mu_1>\mu_{1}^{1,\nu},\ \nu>\lambda_p(\mu_2\mathcal{K}-e\mathcal{I}).   
\end{equation}
Furthermore, by Lemma \ref{Lem-6}, for every $\nu>\lambda_p(\mu_2\mathcal{K}-e\mathcal{I})$, there is $\mu_1^{2,\nu}\ge \mu_1^{1,\nu}$, such that $\lambda_p(\mu_1\mathcal{K}+\Psi_{\mu_2,e}(\cdot,\nu;s\mathcal{I})r-(a+s)\mathcal{I})$ is a geometrically simple eigenvalue whose eigenspace is spanned by a strictly positive eigenfunction. Now, fix $\nu>\lambda_p(\mu_2\mathcal{K}-e\mathcal{I})$. For every $\mu_1>\mu_1^{2,\nu}$, with $\mu=(\mu_1,\mu_2)$ let $\varphi_1^{\mu}$ be the strictly positive eigenfunction of $ \lambda_p(\mu_1\mathcal{K}+\Psi_{\mu_2,e}(\cdot,\nu;s\mathcal{I})r-(a+s)\mathcal{I})$ satisfying $\max_{x\in\overline{\Omega}}\varphi_1^{\mu}(x)=1$ and set $\varphi_2^{\mu}=\Psi_{\mu_2,e}(\cdot,\nu,s\varphi_1^{\mu})$. Then $(\varphi_1^{\mu},\varphi_2^{\mu})\in X^{++}\times X^{++}$ and  by \eqref{IL4}, 
$$
\begin{cases}
    \nu\varphi_{1}^{\mu}>\mu_1\mathcal{K}\varphi_1+r\varphi_2^{\mu}-(a+s)\varphi_1^{\mu_1}\cr 
    \nu\varphi_{2}^{\mu}=\mu_2\mathcal{K}\varphi_2^{\mu}+s\varphi_1^{\mu}-e\varphi_2^{\mu}.
\end{cases}
$$
It then follows from Theorem \ref{TH4} that 
$$
\lambda_p(\mu\circ\mathcal{K}+\mathcal{A})\le \nu\quad \forall\ \mu_1>\mu_1^{2,\nu}.
$$
Therefore, since $\nu>\lambda_p(\mu_2\mathcal{K}-e\mathcal{I})$ is arbitrary, 
\begin{equation}\label{IL6}
    \limsup_{\mu_1\to\infty}\lambda_p(\mu\circ\mathcal{K}+\mathcal{A})\le \lambda_p(\mu_2\mathcal{K}-e\mathcal{I}).
\end{equation}
On the other hand, for every $\varepsilon>0$ and dispersal rate $\mu=(\mu_1,\mu_2)$, with $\varphi=(\varphi_1,\varphi_2)\in X^{++}\times X^{++}$ given by \eqref{IL9}, it follows from \eqref{IL10} that 
$$ 
(\lambda_p(\mu\circ\mathcal{K}-\mathcal{A})+\varepsilon)\varphi_2>\mu_2\mathcal{K}\varphi_2+s\varphi_1-e\varphi_2\ge \mu_2\mathcal{K}\varphi_2-e\varphi_2. 
$$
It then follows from \eqref{sup-inf-char} that 
 $
\lambda_p(\mu\circ\mathcal{K}-\mathcal{A})+\varepsilon\ge \lambda_p(\mu_2\mathcal{K}-e\mathcal{I}).
 $ Since $\varepsilon$ is arbitrary, then for any dispersal rate $\mu=(\mu_1,\mu_2)$, it always holds that 
$ 
\lambda_p(\mu\circ\mathcal{K}-\mathcal{A})\ge \lambda_p(\mu_2\mathcal{K}-e\mathcal{I}). 
 $ Therefore,
\begin{equation}\label{IL6-2}
    \liminf_{\mu_1\to\infty}\lambda_p(\mu\circ\mathcal{K}+\mathcal{A})\ge \lambda_p(\mu_2\mathcal{K}-e\mathcal{I}).
\end{equation}
We deduce the desired result from \eqref{IL6} and \eqref{IL6-2}.

  \end{proof}

  \section{Proofs  of Theorems  \ref{Th2-5} and \ref{Th2-6}}\label{Sec7}

\begin{proof}[Proof of Theorem \ref{Th2-5}]{\rm (i)} Suppose that $r_{\min}>0$. Let $\eta_1^*$ be defined as in \eqref{Th2-5-Eq1}. We distinguish two cases. 

\noindent{\bf Case 1.} $\eta_1^*>-(a+s)_{\min}$. Consider the function 
$$\mathcal{L}\ :\ (-(a+s)_{\min},\infty)\ni \eta\mapsto\int_{\Omega}\left(\frac{rs}{\eta+a+s}-(e+\eta)\right). $$ 

\noindent Clearly, $\mathcal{L}$ is strictly decreasing in $\eta$ and $\eta_1^*=\inf\{\eta\in(-(a+s)_{\min},\infty)\ :\ \mathcal{L}(\eta)<0\}$. Note that $\mathcal{L}(\eta)\to-\infty$ as $\eta\to\infty$. Observe that,  since $\eta_1^*>-(a+s)_{\min}$,  $\mathcal{L}(\eta^*_1)=0$, which implies that 
$$ 
\eta_1^*=\frac{1}{|\Omega|}\int_{\Omega}\frac{rs}{\eta_1^*+a+s}dx-\hat{e}>-\hat{e}.
$$ 
Thus, $\eta_1^*>-\min\{(a+s)_{\min},\hat{e}\}$. Fix $\eta>-\min\{(a+s)_{\min},\hat{e}\}$. Since 
$$ 
\lim_{\mu_1\to 0}\lambda_p(\mu_1\mathcal{K}-(a+s)\mathcal{I})=\max_{x\in\overline{\Omega}}\{-(a(x)+s(x))\}=-(a+s)_{\min},
$$ then  there exists $ \mu_1^{\eta}>0$ such that $\eta>\lambda_p(\mu_1\mathcal{K}-(a+s)\mathcal{I})$ for every $0<\mu_1<\mu_1^{\eta}$.  Hence $\eta\mathcal{I}-(\mu_1\mathcal{K}-(a+s)\mathcal{I})$ is invertible for every $0<\mu_1<\mu_1^{\eta}$.  For every $0<\mu_1<\mu_1^{\eta}$, let 
\begin{equation}\label{4-Eq2}
\varphi_1:=\left(\eta-(\mu_1\mathcal{K}-(a+s))\right)^{-1}(r).
\end{equation}
Hence, $\varphi_1\in X^{++}$ (since $r\in X^{+}\setminus\{0\}$) and satisfies 
\begin{equation}\label{4-Eq1}
    \eta\varphi_1=\mu_1\mathcal{K}(\varphi_1)+r -(a+s)\varphi_1
\end{equation}
Note from the comparison principle for nonlocal operators that $\|\varphi_1\|_{\infty}\le \frac{\|r\|_{\infty}}{\eta+(a+s)_{\min}}$. Moreover, it follows from \eqref{4-Eq1} that 
\begin{equation*}
    \varphi_1-\frac{r}{\eta+a+s}=\frac{\mu_1\mathcal{K}(\varphi_1)}{\eta+a+s}
\end{equation*}
from which we deduce that 
\begin{equation}\label{4-Eq10}
    \left\|\varphi_1-\frac{r}{\eta+a+s} \right\|_{\infty}\le \mu_1\left\|\frac{\mathcal{K}(\varphi_1)}{\eta+a+s}\right\|_{\infty}\le \mu_1\frac{\|\mathcal{K}\|\|\varphi_1\|_{\infty}}{\eta+(a+s)_{\min}}\le \mu_1\frac{\|\mathcal{K}\|\|r\|_{\infty}}{(\eta+(a+s)_{\min})^2}\to 0 \ \text{as}\ \mu_1\to0.
\end{equation}
Next, since
$$
\lim_{\mu_2\to\infty}\lambda_p(\mu_2\mathcal{K}-e\mathcal{I})=-\hat{e},
$$
then there is $\mu_2^{\eta}>0$ such that $\eta>\lambda_p(\mu_2\mathcal{K}-e\mathcal{I})$ for every $\mu_2>\mu_2^{\eta}$. For every $0<\mu_1<\mu_1^{\eta}$ and $\mu_2>\mu_2^{\eta}$, let $$
\varphi_2:=(\eta\mathcal{I}-(\mu_2\mathcal{K}-e\mathcal{I}))^{-1}(s\varphi_1)
$$
where $\varphi_1$ is given by \eqref{4-Eq2}. Since $s\varphi_1\in X^+\setminus\{0\}$, then $\varphi_2\in X^{++}$. Now, observe that $(\varphi_1,\varphi_2)$ satisfies
 
\begin{equation}\label{4-Eq3}
\begin{cases}
\eta\varphi_1=\mu_1\mathcal{K}(\varphi_1)+r\varphi_2-(a+s)\varphi_1 & x\in\Omega,\cr 
    \eta\varphi_2=\mu_2\mathcal{K}(\varphi_2)+s\varphi_1-e\varphi_2 & x\in \Omega.
    \end{cases}
\end{equation}
Observe that from the equation 
\begin{equation}\label{4-Eq4}
    \eta\varphi_2=\mu_2\mathcal{K}(\varphi_2)+s\varphi_1-e\varphi_2
\end{equation}
that 
\begin{equation}\label{4-Eq5}
\varphi_2=\hat{\varphi_2}+\frac{1}{K(x)}\left[\int_{\Omega}\kappa(x,y)(\varphi_2(y)-\hat{\varphi_2})dy-\frac{1}{\mu_2}\left((\eta+e)\varphi_2-s\varphi_1\right)\right].
\end{equation}
Thus, multiplying \eqref{4-Eq4} by $\varphi_2$, integrating the resulting equation and employing arguments similar to that leading to \eqref{HK24-2-1}, we get that 
\begin{align*}
    \left\|\varphi_2-\hat{\varphi_2}\right\|_{L^2(\Omega)}^2\le& \frac{1}{\mu_2\beta_*}\left(\int_{\Omega}s\varphi_1\varphi_2-\eta\int_{\Omega}\varphi_2^2\right)\le\frac{(\|s\|_{\infty}+|\eta|)}{\mu_2\beta_*}\sum_{i=1}^2\int_{\Omega}\varphi_i^2.
\end{align*}
Therefore, it follows from \eqref{4-Eq5} that 
\begin{align}\label{4-Eq6}
    \|\varphi_2-\hat{\varphi_2}\|_{\infty}\le& \frac{1}{K_{\min}}\left(\kappa_{\max}\int_{\Omega}|\varphi_2-\hat{\varphi_2}|dy+\frac{(\|\eta+e\|_{\infty}+\|s\|_{\infty})}{\mu_2}\sum_{i=1}^2\|\varphi_i\|_{\infty} \right)\cr 
    \le & \frac{1}{K_{\min}}\left(\kappa_{\max}\sqrt{|\Omega|}\|\varphi_2-\hat{\varphi_2}\|_{L^2(\Omega)}+\frac{(\|\eta+e\|_{\infty}+\|s\|_{\infty})}{\mu_2}\sum_{i=1}^2\|\varphi_i\|_{\infty} \right)\cr 
    \le &\frac{1}{K_{\min}}\left(\frac{\kappa_{\max}\sqrt{|\Omega|(\|s\|_{\infty}+|\eta|)}}{\sqrt{\mu_2\beta_*}}\sqrt{\sum_{i=1}^2|\Omega|\|\varphi_i\|_{\infty}^2} +\frac{(\|\eta+e\|_{\infty}+\|s\|_{\infty})}{\mu_2}\sum_{i=1}^2\|\varphi_i\|_{\infty}\right)\cr 
    \le &\frac{1}{K_{\min}}\left(\frac{\kappa_{\max}|\Omega|\sqrt{2(\|s\|_{\infty}+|\eta|)}}{\sqrt{\mu_2\beta_*}}+\frac{(\|\eta+e\|_{\infty}+\|s\|_{\infty})}{\mu_2} \right)\sum_{i=1}^2\|\varphi_i\|_{\infty}.
\end{align}
Next, we claim that
\begin{equation}\label{4-Eq7}
    \limsup_{\mu_2\to\infty,\mu_1\to 0}\|\varphi_2\|_{\infty}<\infty.
\end{equation}
We proceed by contradiction to establish that \eqref{4-Eq7} holds. Suppose that there is a sequence $\mu_{2}^n\to \infty$ and $\mu_{1}^n\to 0$ such that \begin{equation}
\lim_{n\to\infty}\|\varphi_{2,n}\|_{\infty}=\infty,
\end{equation}
where $(\varphi_{1,n},\varphi_{2,n})$ solves \eqref{4-Eq3} with $\mu=(\mu_{1,n},\mu_{2,n})$ for every $n\ge1$. Set $\psi_{i,n}=\frac{\varphi_{i, n}}{\|\varphi_{2,n}\|_{\infty}}, i = 1,2$ for every $n\ge 1$. Hence, 
\begin{equation}\label{4-Eq8}
    \|\psi_{1,n}\|_{\infty}\le \frac{\|\varphi_{1,n}\|_{\infty}}{\|\varphi_{2,n}\|_{\infty}}\leq \frac{\|r\|_{\infty}}{(\eta+(a+s)_{\min})\|\varphi_{2,n}\|_{\infty}}\to 0 \ \text{as}\ n\to\infty
\end{equation}
and 
\begin{equation}\label{4-Eq9}
    \|\psi_{2,n}\|_{\infty}=1\quad \forall\ n\ge 1.
\end{equation}
Thus, dividing both sides of \eqref{4-Eq6} by $\|\varphi_{2,n}\|_{\infty}$, we obtain
$$ 
\left\|\psi_{2,n}-\frac{1}{|\Omega|}\int_{\Omega}{\psi_{2,n}}\right\|_{\infty}\to 0 \quad \text{as}\ n\to\infty,
$$
which together with \eqref{4-Eq9} yields 
\begin{equation}
    \lim_{n\to\infty}\frac{1}{|\Omega|}\int_{\Omega}\psi_{2,n}=1 \quad \text{and}\quad \lim_{n\to\infty}\|\psi_{2,n}-1\|_{\infty}=0.
\end{equation}
If we integrate both sides of \eqref{4-Eq4}, we obtain that
$$ 
\eta\frac{1}{|\Omega|}\int_{\Omega}\psi_{2,n}(x)dx=-\frac{1}{|\Omega|}\int_{\Omega}e(x)\psi_{2,n}(x)dx+\frac{1}{|\Omega|}\int_{\Omega}s(x)\psi_{1,n}(x)dx.
$$
Letting $n\to\infty$ in this equation and recalling \eqref{4-Eq8} and \eqref{4-Eq9}, we obtain that $\eta=-\hat{e}$, which contradicts our initial assumption that $\eta>-\hat{e}$. Therefore, \eqref{4-Eq7} holds. 
\quad Now, thanks to \eqref{4-Eq7} and the fact that $\|\varphi_1\|_{\infty}\leq \frac{\|r\|_{\infty}}{\eta+(a+s)_{\min}}$, we conclude from \eqref{4-Eq6} that 
\begin{equation*}
    \lim_{\mu_1\to0,\mu_2\to\infty} \|\varphi_2-\hat{\varphi_2}\|_{\infty}=0.
\end{equation*}
As a result, if we integrate \eqref{4-Eq4} and recall \eqref{4-Eq10}, we get 
\begin{equation}\label{4-Eq11}
    \varphi_2\to \frac{\int_{\Omega}\frac{sr}{\eta+a+s}}{|\Omega|(\eta+\hat{e})}=1+\frac{\mathcal{L}(\eta)}{|\Omega|(\eta+\hat{e})} \quad \text{as}\ \mu_1\to0 \ \text{and}\ \mu_2\to\infty\ \text{uniformly in }\ \Omega.
\end{equation}
From this point, we distinguish two subcases.

\noindent{\bf Subcase 1.} $\eta>\eta_1^*$. Then $\mathcal{L}(\eta)<\mathcal{L}(\eta_1^*)=0$. Then, it follows from \eqref{4-Eq11} that there is $M\gg1$ such that $\varphi_2<1$ for every $0<\mu_1<\frac{1}{M}$ and $\mu_2>M$. As a result, we conclude from \eqref{4-Eq3} that 
\begin{equation*}
\begin{cases}
\eta\varphi_1\ge\mu_1\mathcal{K}(\varphi_1)+r\varphi_2-(a+s)\varphi_1 & x\in\Omega,\cr 
    \eta\varphi_2\ge \mu_2\mathcal{K}(\varphi_2)+s\varphi_1-e\varphi_2 & x\in \Omega,
    \end{cases}
\end{equation*}
for every $0<\mu_1<\frac{1}{M}$ and $\mu_2>M$, which implies that 
$$ \limsup_{\mu_1\to0,\mu_2\to\infty}\lambda_p(\mu\circ\mathcal{K}+\mathcal{A})\le \eta.$$
Since $\eta>\eta_1^*$ is arbitrary, then $ \limsup_{\mu_1\to0,\mu_2\to\infty}\lambda_p(\mu\circ\mathcal{K}+\mathcal{A})\le \eta_1^*.$

\noindent{\bf Subcase 2.} $\eta<\eta_1^*$. Then $\mathcal{L}(\eta)>\mathcal{L}(\eta_1^*)=0$. Then, it follows from \eqref{4-Eq11} that there is $M\gg1$ such that $\varphi_2>1$ for every $0<\mu_1<\frac{1}{M}$ and $\mu_2>M$. As a result, we conclude from \eqref{4-Eq3} that 
\begin{equation*}
\begin{cases}
\eta\varphi_1\le\mu_1\mathcal{K}(\varphi_1)+r\varphi_2-(a+s)\varphi_1 & x\in\Omega,\cr 
    \eta\varphi_2\le \mu_2\mathcal{K}(\varphi_2)+s\varphi_1-e\varphi_2 & x\in \Omega,
    \end{cases}
\end{equation*}
for every $0<\mu_1<\frac{1}{M}$ and $\mu_2>M$, which implies that 
$$ \liminf_{\mu_1\to0,\mu_2\to\infty}\lambda_p(\mu\circ\mathcal{K}+\mathcal{A})\ge \eta.$$
Since $\eta<\eta_1^*$ is arbitrary, then $ \liminf_{\mu_1\to0,\mu_2\to\infty}\lambda_p(\mu\circ\mathcal{K}+\mathcal{A})\ge \eta_1^*.$

\quad From Subcases 1 and 2, we deduce that $ \lim_{\mu_1\to0,\mu_2\to\infty}\lambda_p(\mu\circ\mathcal{K}+\mathcal{A})= \eta_1^*.$

\noindent{\bf Case 2.} $\eta_1^*=-(a+s)_{\min}$. In this case, we must have that $-(a+s)_{\min}\ge -\hat{e}$. Indeed, if this was false, that is $-\hat{e}>-(a+s)_{\min}$, we would have $$\mathcal{L}(-\hat{e})=\int_{\Omega}\left(\frac{rs}{a+s-\hat{e}}-(e-\hat{e})\right)=\int_{\Omega}\frac{rs}{a+s-\hat{e}}>0,
$$ 
from which we deduce that $\eta_1^*>-\hat{e}$. So, we obtain a contradiction to the fact that $\eta_1^*=-(a+s)_{\min}<-\hat{e}$. Next, since $\eta_1^*=-(a+s)_{\min}=-\min\{(a+s)_{\min},\hat{e}\}$, then $\mathcal{L}(\eta)<0$ for every $\eta>-(a+s)_{\min}$. It then follows from \eqref{4-Eq11} and subcase 1 above that $$ \limsup_{\mu_1\to0,\mu_2\to\infty}\lambda_p(\mu\circ\mathcal{K}+\mathcal{A})\le \eta_1^*.$$ Now, it remains to show that 
\begin{equation}\label{4-Eq12}
 \liminf_{\mu_1\to0,\mu_2\to\infty}\lambda_p(\mu\circ\mathcal{K}+\mathcal{A})\ge \eta_1^*.
\end{equation}
Let $\varepsilon>0$. By Theorem \ref{TH4}, for every $\mu_1>0$ and $\mu_2>0$, there is $\varphi\in X^{++}\times X^{++}$ such that 
$$
(\lambda_p(\mu\circ\mathcal{K}+\mathcal{A})+\varepsilon)\varphi_1\ge \mu_1\mathcal{K}(\varphi_1)+r\varphi_2-(a+s)\varphi_1\ge \mu_1\mathcal{K}(\varphi_1)-(a+s)\varphi_1
$$
and
$$
(\lambda_p(\mu\circ\mathcal{K}+\mathcal{A})+\varepsilon)\varphi_2\ge \mu_2\mathcal{K}(\varphi_2)+s\varphi_1-e\varphi_2\ge \mu_2\mathcal{K}(\varphi_2)-e\varphi_2.
$$
Hence,
$$
\lambda_p(\mu\circ\mathcal{K}+\mathcal{A})+\varepsilon\ge \lambda_p(\mu_1\mathcal{K}-(a+s)\mathcal{I}) \quad \forall\ \mu_1>0,\ \mu_2>0.
$$
Therefore, since $\varepsilon$ is arbitrary,
\begin{align*}
\liminf_{\mu_1\to0,\mu_2\to\infty}\lambda_p(\mu\circ\mathcal{K}+\mathcal{A})\ge&\lim_{\mu_1\to0}\lambda_p(\mu_1\mathcal{K}-(a+s)\mathcal{I})
=-(a+s)_{\min}=\eta_1^*,
\end{align*}
which yields \eqref{4-Eq12}.  Finally, it follows from the monotonicity of the function $\mathcal{L}$ that $\eta^*_1$ and $\lim_{\eta\to 0^+}\mathcal{L}(\eta)$ have the same sign.

\medskip

\quad {\rm (ii)} It follows by a proper modification of the proof of {\rm (i)}.

\end{proof}

We complete this section with a proof of Theorem \ref{Th2-6}.

\begin{proof}[Proof of Theorem \ref{Th2-6}] Suppose that $r_{\min}>0$.\\ 
{\rm (i)}  Suppose on the contrary that there exist a sequence $\mu^n:=(\mu_1^{n},\mu_2^{n})$ of dispersal rates with $\mu_1^n\to 0^+$ as $n\to\infty$ such that 
\begin{equation}\label{4-Eq13}  \limsup_{n\to\infty}\lambda_p(\mu^n\circ\mathcal{K}+\mathcal{A})\le 0.
\end{equation}
We now distinguish three cases.

{\bf Case 1.} In this case, we suppose without loss of generality that $\mu_2^n\to\infty$ as $n\to\infty$. Therefore, it follows from Theorem \ref{Th2-5}-{\rm (i)} that $\lambda_p(\mu^n\circ\mathcal{K}+\mathcal{A})\to\eta_1^*$ as $n\to\infty$. This contradicts with \eqref{4-Eq13} since $\eta_1^*>0$. 

{\bf Case 2.} In this case, we suppose without loss of generality that $\mu_2^n\to 0$ as $n\to\infty$. Therefore, it follows from Theorem \ref{Th2-1}-{\rm (i)} that $\lambda_p(\mu^n\circ\mathcal{K}+\mathcal{A})\to\Lambda_{\max}$ as $n\to\infty$. Hence $\Lambda_{\max}\le 0$ by \eqref{4-Eq13}. As a result, we obtain from \eqref{HK8} that 
$$
\sqrt{(a(x)+s(x)-e(x))^2+4r(x)s(x)}\le (a(x)+s(x)+e(x)) \quad \forall\ x\in\Omega,
$$
which is equivalent to 
$$
r(x)s(x)\le (a(x)+s(x))e(x) \quad \forall\ x\in\Omega.
$$
Thus for every $\eta>-(a+s)_{\min}$,
$$
\int_{\Omega}\left(\frac{rs}{\eta+a+s}-(\eta+e)\right)\le \int_{\Omega}\left(\frac{(a+s)e}{\eta+a+s}-(e+\eta)\right)=-\eta\int_{\Omega}\left(\frac{ e}{\eta+a+s}+1\right).
$$
In particular, since $\eta_1^*>0\ge-(a+s)_{\min}$, then 
$$
0=\int_{\Omega}\left(\frac{rs}{\eta_1^*+a+s}-(\eta_1^*+e)\right)\le -\eta_1^*\int_{\Omega}\left(\frac{ e}{\eta_1^*+a+s}+1\right)
$$
This is clearly impossible since $\eta_1^*>0$. 

{\bf Case 3.} In this case, we suppose without loss of generality that $\mu_2^n\to\mu_2$ as $n\to \infty$ for some positive number $\mu_2$. Therefore, it follows from Theorem \ref{Th2-3}-{\rm (i)} that $\lambda_p(\mu^n\circ\mathcal{K}+\mathcal{A})\to\lambda_{\mu_2}^{a+s,e}$ as $n\to\infty$, which in view of \eqref{4-Eq13} implies that 
\begin{equation}\label{4-Eq14}
 \lambda_{\mu_2}^{a+s,e}\le 0.   
\end{equation}However,  
$$
0=\frac{1}{|\Omega|}\int_{\Omega}\left(\frac{rs}{\eta_1^*+a+s}-(\eta_1^*+e)\right)\le \lambda_p\Big(\mu_2\mathcal{K}+\Big(\frac{rs}{\eta_1^*+a+s}-(e+\eta_1^*)\Big)\mathcal{I}\Big).
$$
Hence, it follows from Proposition \ref{Main-prop1} and the fact that the mapping $(-(a+s)_{\min},\infty)\ni \eta\mapsto \lambda_p\Big(\mu_2\mathcal{K}+\Big(\frac{rs}{\eta+a+s}-(e+\eta)\Big)\mathcal{I}\Big)$ is strictly decreasing that $\lambda_{\mu_2}^{a+s,e}\ge \eta_1^*$. This clearly contradicts with \eqref{4-Eq14} since $\eta_1^*>0$.

From cases 1, 2 and 3, we deduce that there is $\delta_1^*>0$ such the statement of Theorem \ref{Th2-6}-{\rm (i)} holds.

{\rm (ii)} Suppose that $\Lambda_{\max}<0$. Then $\eta_1^*<0$.  Indeed, if it was the case that $\eta_{1}^*\ge 0$, then 
$$
\int_{\Omega}\left(\frac{rs}{\eta_1^*+a+s}-(\eta_1^*+e)\right)=0,
$$
which would imply  that there is some $x_0\in\overline{\Omega}$ such that
\begin{equation*}
    r(x_0)s(x_0)\ge (\eta_1^*+e(x_0))(\eta_1^*+(a(x_0)+s(x_0))) \ge e(x_0)(a(x_0)+s(x_0)).
\end{equation*}
This implies that $\Lambda_{\max}\ge 0$, which is not true. Hence, we must have that $\eta_1^*<0$. Now, we proceed by contradiction to establish the desired result.  Suppose to the contrary that there exist a sequence $\mu^n:=(\mu_1^{n},\mu_2^{n})$ of dispersal rates with $\mu_1^n\to 0^+$ as $n\to\infty$ such that 
\begin{equation}\label{4-Eq15}  \liminf_{n\to\infty}\lambda_p(\mu^n\circ\mathcal{K}+\mathcal{A})\ge 0.
\end{equation}
We distinguish three cases.

{\bf Case 1.} In this case, we suppose without loss of generality that $\mu_2^n\to\infty$ as $n\to\infty$. Therefore, it follows from Theorem \ref{Th2-5}-{\rm (i)} that $\lambda_p(\mu^n\circ\mathcal{K}+\mathcal{A})\to\eta_1^*$ as $n\to\infty$. This contradicts with \eqref{4-Eq15} since $\eta_1^*<0$.

{\bf Case 2.} In this case, we suppose without loss of generality that $\mu_2^n\to0$ as $n\to\infty $. Therefore, it follows from Theorem \ref{Th2-1}-{\rm (i)} that $\lambda_p(\mu^n\circ\mathcal{K}+\mathcal{A})\to\Lambda_{\max}$ as $n\to\infty$. This contradicts with \eqref{4-Eq15} since $\Lambda_{\max}<0$.

{\bf Case 3.} In this case, we suppose without loss of generality that $\mu_2^n\to\mu_2$ as $n\to\infty $ for some positive number $\mu_2$. Therefore, it follows from Theorem \ref{Th2-3}-{\rm (i)} that $\lambda_p(\mu^n\circ\mathcal{K}+\mathcal{A})\to\lambda_{\mu_2}^{a+s,e}$ as $n\to\infty$, which in view of \eqref{4-Eq15} implies that 
\begin{equation}\label{4-Eq16}
 \lambda_{\mu_2}^{a+s,e}\ge 0.   
\end{equation}
However,  since $ \eta_1^*<0$, then $-(a+s)_{\min}<0\le \lambda_{\mu_2}^{a+s,e}$. Therefore, by Proposition \ref{Main-prop1}, 
$$
0=\lambda_p\Big(\mu_2\mathcal{K}+\Big(\frac{rs}{\lambda_{\mu_2}^{a+s,e}+a+s}-(e+\lambda_{\mu_2}^{a+s,e})\Big)\mathcal{I}\Big),
$$
which implies that there is some $x_1\in\overline{\Omega}$ such that 
\begin{equation*}
    r(x_1)s(x_1)\ge (\lambda_{\mu_2}^{a+s,e}+e(x_1))(\lambda_{\mu_2}^{a+s,e}+(a(x_1)+s(x_1))) > e(x_1)(a(x_1)+s(x_1)).
\end{equation*}
This implies that $\Lambda_{\max}> 0$, which is not true.

From cases 1, 2 and 3, we deduce that there is $\delta_1^*>0$ such the statement of Theorem \ref{Th2-6}-{\rm (ii)} holds.

{\rm (iii)} Suppose that $(a+s)_{\min}>0$ and $\eta_1^*<0<\Lambda_{\max}$. Then 
$$ 
\int_{\Omega}\left(\frac{rs}{a+s}-e\right)<0<\left(\frac{rs}{a+s}-e\right)_{\max}.
$$
This implies that the function $\frac{rs}{a+s}-e$ is not constant. Hence, the function $(0,\infty)\ni\mu_2\mapsto \lambda_{p}\Big(\mu_2^*\mathcal{K}+\Big(\frac{rs}{a+s}-e\Big)\mathcal{I}\Big)$ is strictly decreasing  (\cite[Theorem 2.2(i)]{SX2015}).  Moreover, since
$$ 
\lim_{\mu_2\to\infty}\lambda_{p}\Big(\mu_2^*\mathcal{K}+\Big(\frac{rs}{a+s}-e\Big)\mathcal{I}\Big)=\frac{1}{|\Omega|}\int_{\Omega}\Big(\frac{rs}{a+s}-e\Big)$$  \text{and} $$\lim_{\mu_2\to 0}\lambda_{p}\Big(\mu_2^*\mathcal{K}+\Big(\frac{rs}{a+s}-e\Big)\mathcal{I}\Big)=\Big(\frac{rs}{a+s}-e\Big)_{\max},
$$
 there is a unique $\mu_2^*>0$ such that $\lambda_{p}\Big(\mu_2^*\mathcal{K}+\Big(\frac{rs}{a+s}-e\Big)\mathcal{I}\Big)=0$. Next, fix $0<\varepsilon<\mu_2^*$ and we proceed in two steps to complete the proof of the theorem.

 {\bf Step 1.} In this step, we show that there exist $\delta_{1,\varepsilon}>0$ and $m^*>0$ such that $\lambda_p(\mu\circ\mathcal{K}+\mathcal{A})>m^*$ for every $\mu=(\mu_1,\mu_2)\in (0,\delta_{1,\varepsilon})\times(0,\mu_2^*-\varepsilon)$. If this was false, there would exist a sequence $\mu^n=(\mu_1^n,\mu_2^n)$ of dispersal rates with $0<\mu_2^n\le\mu_2^*-\varepsilon$ for every $n\ge 1$ and $\mu_1^n\to0^+$  as $n\to\infty$ such that \eqref{4-Eq13} holds. If, up to a subsequence, $\mu_2^n\to 0$ as $n\to \infty$, then by Theorem \ref{Th2-1}, $\lambda_p(\mu^n\circ\mathcal{K}+\mathcal{A})\to \Lambda_{\max}>0$  as $n\to\infty$. This contradicts with \eqref{4-Eq13}. On the other hand, if $\mu_2^n\to \mu_2\in(0,\mu_2^*-\varepsilon]$ (up to a subsequence) as $n\to\infty$, then by Theorem \ref{Th2-3}-{\rm (i)}, $\lambda_p(\mu^n\circ\mathcal{K}+\mathcal{A})\to\lambda_{\mu_2}^{a+s,e}$ as $n\to\infty$. Hence, in view of \eqref{4-Eq13}, we must have that $\lambda_{\mu_2}^{a+s,e}\le 0$. However, since $\mu_2<\mu_2^*$, then $\lambda_p\Big(\mu_2\mathcal{K}+\Big(\frac{rs}{a+s}-e\Big)\mathcal{I}\Big)>\lambda_p\Big(\mu_2^*\mathcal{K}+\Big(\frac{rs}{a+s}-e\Big)\mathcal{I}\Big)=0$, which implies that $\lambda_{\mu_2}^{a+s,e}>0$, so we get a contradiction. Therefore, we conclude that there exist $\delta_{1,\varepsilon}>0$ and $m^*>0$ such that $\lambda_p(\mu\circ\mathcal{K}+\mathcal{A})>m^*$ for every $\mu=(\mu_1,\mu_2)\in (0,\delta_{1,\varepsilon})\times(0,\mu_2^*-\varepsilon)$.

 {\bf Step 2.} In this step, we show that there exist $\delta_{1,\varepsilon}>0$ and $m^*>0$ such that $\lambda_p(\mu\circ\mathcal{K}+\mathcal{A})<-m^*$ for every $\mu=(\mu_1,\mu_2)\in (0,\delta_{1,\varepsilon})\times(\mu_2^*+\varepsilon,\infty)$. If this was false, there would exist a sequence $\mu^n=(\mu_1^n,\mu_2^n)$ of dispersal rates with $\mu_2^n\ge\mu_2^*-\varepsilon$ for every $n\ge 1$ and $\mu_1^n\to0^+$  as $n\to\infty$ such that \eqref{4-Eq15} holds.

 If, up to a subsequence, $\mu_2^n\to \infty$ as $n\to \infty$, then by Theorem \ref{Th2-5}-{\rm (i)}, $\lambda_p(\mu^n\circ\mathcal{K}+\mathcal{A})\to \eta_1^*<0$  as $n\to\infty$. This contradicts with \eqref{4-Eq15}. On the other hand, if $\mu_2^n\to \mu_2\in[\mu_2^*+\varepsilon,\infty)$ (up to a subsequence) as $n\to\infty$, then by Theorem \ref{Th2-3}-{\rm (i)}, $\lambda_p(\mu^n\circ\mathcal{K}+\mathcal{A})\to\lambda_{\mu_2}^{a+s,e}$ as $n\to\infty$.  Hence, in view of \eqref{4-Eq15}, we must have that $\lambda_{\mu_2}^{a+s,e}\ge 0$. However, since $\mu_2>\mu_2^*$, then $\lambda_p\Big(\mu_2\mathcal{K}+\Big(\frac{rs}{a+s}-e\Big)\mathcal{I}\Big)<\lambda_p\Big(\mu_2^*\mathcal{K}+\Big(\frac{rs}{a+s}-e\Big)\mathcal{I}\Big)=0$, which implies that $\lambda_{\mu_2}^{a+s,e}<0$, so we get a contradiction. Therefore, we conclude that there exist $\delta_{1,\varepsilon}>0$ and $m^*>0$ such that $\lambda_p(\mu\circ\mathcal{K}+\mathcal{A})<-m^*$ for every $\mu=(\mu_1,\mu_2)\in (0,\delta_{1,\varepsilon})\times(\mu_2^*+\varepsilon,\infty)$. This completes the proof of the theorem.

\end{proof}

\section{Proofs of Theorems \ref{TH6}, \ref{TH7}, \ref{TH8}, and \ref{TH9}}\label{Sec8}

\begin{proof}[Proof of Theorems \ref{TH6}] We proceed by contradiction. To this end, suppose that there exists a sequence of dispersal rate $\mu^n=(\mu_1^n,\mu_2^n)$ converging to zero, a sequence of positive steady states solutions ${\bf u}^n$ of \eqref{model} associated with the dispersal rates $\mu^n$, and  a sequence $\{x_n\}_{n\ge 1}$ of elements of $\overline{\Omega}$ such that 
\begin{equation}\label{BH1}
    \inf_{n\ge 1}\|{\bf u}^n(x^n)-{\bf V}(x^n)\|>0.
\end{equation}

\noindent{\bf Step 1.} Uniform bound on $
\{{\bf u}^n\}_{n\ge1 }$.  It is clear that any positive steady state solution ${\bf u}$ of \eqref{model} is a subsolution of the cooperative  and subhomogeneous system

\begin{equation}\label{BH2}
    \begin{cases}
        0=\mu_1\mathcal{K}u_1+r(x)u_2-(a(x)+s(x)+b(x)u_1(x))u_1(x) & x\in\Omega,\cr 
        0=\mu_2\mathcal{K}u_2+s(x)u_1-(e(x)+f(x)u_2)u_2 & x\in\Omega.
    \end{cases}
\end{equation}
Now, since $\Lambda_{\max}>0$, it follows from Theorem \ref{Th2-1} that there is $\eta_0>0$, such that $\lambda_p(\mu\circ\mathcal{K}+\mathcal{A})>0$ for every $\mu=(\mu_1,\mu_2)$ with $0<\mu_i<\eta_0$, $i=1,2$. Hence, by \cite[Theorem 4-(i)]{OSUU2023}, system \eqref{BH2} has a unique globally stable positive steady state solution ${\bf u}^{\mu}$ for every $\mu=(\mu_1,\mu_2)$ with $0<\mu_i<\eta_0$, $i=1,2$.  However, an easy computation shows that the constant vector function ${\bf M}^*:=(M_1^*,M_2^*)$ where $M_1^*=M_2^*=\max\{\frac{r_{\max}}{b_{\min}},\frac{s_{\max}}{f_{\min}}\}$ is a supersolution of \eqref{BH2}. Hence, by stability of ${\bf u}^{\mu}$, we deduce that ${\bf u}^{\mu}\le_1 M^*$ for every $\mu=(\mu_1,\mu_2)$ with $0<\mu_i<\eta_0$, $i=1,2$. Since ${\bf u}^n$ is a subsolution of \eqref{BH2} with $\mu^n=\mu$ and $\mu^n\to 0$ as $n\to \infty$, then, without loss of generality, we have that ${\bf u}^n\le_1 {\bf M}^*$ for every $n\ge 1$. 

\noindent{\bf Step 2.} For every $x\in\overline{\Omega}$ and $n\ge 1$, it holds that 
\begin{equation}\label{BH3}
    \Lambda(x)\le \max\{u_1^n(x),u_2^n(x)\}(b(x)+f(x)+c(x)+g(x)+\mu_1^nK(x)+\mu_2^nK(x)).
\end{equation}
Indeed, observe that ${\bf u}^n$ satisfies
$$ 
\begin{cases}
(b(x)u_1^n(x)+c(x)u_2^n(x)+\mu_1K(x))u_1^n(x)=r(x)u_2^n(x)-(a(x)+s(x))u_1^n(x)+\mu_1\int_{\Omega}\kappa(x,y)u_1^n(y)dy\cr 
(f(x)u_2^n(x)+g(x)u_1^n(x)+\mu_2K(x))u_2^n(x)=s(x)u_1^n(x)-e(x)u_1^n(x)+\mu_2\int_{\Omega}\kappa(x,y)u_2^n(y)dy.
\end{cases}
$$
Hence, 
$$ 
\max\{u_1^n(x),u_2^n(x)\}(b(x)+f(x)+c(x)+g(x)+\mu_1^nK(x)+\mu_2^nK(x)){\bf u}^n(x)\ge \mathcal{A}(x){\bf u}^n(x).
$$
As a result, we deduce from Theorem \ref{TH4} that \eqref{BH3} holds since ${\bf u}^n(x)>0$.

\noindent{\bf Step 3.} In this step, we complete the proof of the theorem by deriving contradiction to \eqref{BH1}. By Step 1 and the Bolzano-Weierstrass theorem, if possible after passing to a subsequence, we may suppose that ${\bf u}^n(x^n)\to {\bf u}^{\infty}$ as $n\to\infty$ for some ${\bf u}^{\infty}\in \mathbb{R}^+\times\mathbb{R}^+$. Furthermore, since $\overline{\Omega}$ is compact, again after passing to a further subsequence, we may suppose that there is some $x_{\infty}\in\overline{\Omega}$ such that $x_n\to x_{\infty}$ as $n\to\infty$. Therefore, observing from Step 1 that $\mu_i^n\|\mathcal{K}u_i^n\|_{\infty}\to 0$ as $n\to\infty$, then ${\bf u}^{\infty}$ satisfies 

\begin{equation}\label{BH4}
    \begin{cases}
        0=r(x_{\infty})u_2^{\infty}-(a(x_{\infty})+s(x_{\infty})+b(x_{\infty})u_1^{\infty}+c(x_{\infty})u_2^{\infty})u_1^{\infty}\cr 
        0=s(x_{\infty})u_1^{\infty}-(e(x_{\infty})u_2^{\infty}+g(x_{\infty})u_1^{\infty})u_2^{\infty}.
    \end{cases}
\end{equation}
Now, we distinguish two cases. 

\noindent{\bf Case 1.} $\Lambda(x_\infty)\le 0$. In this case, it follows from \eqref{BH4} and \cite[Theorem 2]{OSUU2023} that ${\bf u}^{\infty}={\bf 0}$. Since ${\bf V}(x_{\infty})$ is also a nonnegative solution of \eqref{BH4}, we also have by \cite[Theorem 2]{OSUU2023} that ${\bf V}(x_{\infty})={\bf 0}$. This clearly contradicts with \eqref{BH1} since ${\bf V}(x_n)\to {\bf V}(x_{\infty})$ and ${\bf u}^n(x_n)\to {\bf u}^{\infty}$ as $n\to\infty$.

\noindent{\bf Case 2.} $\Lambda(x_\infty)>0$. From Step 2, we have that 
$$ 
0<\Lambda(x_{\infty})\le \max\{u_1^{\infty},u_2^{\infty}\}(b(x_{\infty})+f(x_{\infty})+c(x_{\infty})+g(x_{\infty})),
$$
which implies that ${\bf u}^{\infty}\ne {\bf 0}$. However, by \cite[Theorem 2]{OSUU2023}, ${\bf V}(x_{\infty})$ is the unique nonnegative solution of \eqref{BH4} different from ${\bf 0}$. Therefore, ${\bf V}(x_{\infty})={\bf u}^{\infty}$, which contradicts with \eqref{BH1}. 

\quad Thanks to cases 1 and 2,  the desired result holds.

\end{proof}

\begin{proof}[Proof of Theorem \ref{TH7}] Suppose that $\tilde{\Lambda}>0$ and let ${\bf u}^{\mu}$ be a positive steady-state solution of \eqref{model} for ${\mu}>>1$.  This gives 
 \begin{equation}\label{stationary}
     \begin{cases}
      0=\mu_1\int_{\Omega}\kappa(x,y)(u_1^{\mu}(y)-u_1^{\mu}(x))dy +ru_2^{\mu}-su_1^{\mu} -(a+bu_1^{\mu}+\tau cu_2^{\mu})u_1^{\mu}& x\in\Omega,\cr
      0=\mu_2\int_{\Omega}\kappa(x,y)(u_2^{\mu}(y)-u_2^{\mu}(x))dy +su_1^{\mu}-(e+fu_2^{\mu}+\tau gu_1^{\mu})u_2^{\mu}& x\in\Omega.
     \end{cases}
 \end{equation}

Set $v_i^{\mu}=u_i^{\mu}-\frac{1}{|\Omega|}\int_{\Omega}u_i^{\mu}$, $i=1,2$.

\noindent{\bf Step 1.} In this step, we show that for each $i=1,2$, 
\begin{equation}\label{BH6}
    \lim_{\mu\to \infty}\|v_i^{\mu}\|_{\infty}=0.
\end{equation}

Multiplying the first equation of  \eqref{stationary}  by $u_1^{\mu}$ and integrating the resulting equation, it follows by similar arguments leading to \eqref{HK24-2-1} that 
\begin{equation*}
    \|v_1^{\mu}\|_{L^2(\Omega)}^2\leq \frac{1}{\mu_1\beta_*}\int_{\Omega}ru_1^{\mu}u_2^{\mu}\le \frac{\|u_1^{\mu}\|_{\infty}\|u_{2}^{\mu}\|_{\infty}}{\mu_1\beta_*}\int_{\Omega}r.
\end{equation*}
Similarly, 
\begin{equation*}
    \|v_2^{\mu}\|_{L^2(\Omega)}^2\leq \frac{1}{\mu_2\beta_*}\int_{\Omega}su_1^{\mu}u_2^{\mu}\le \frac{\|u_1^{\mu}\|_{\infty}\|u_{2}^{\mu}\|_{\infty}}{\mu_2\beta_*}\int_{\Omega}s.
\end{equation*}
Recalling from Step 1 of the proof of Theorem \ref{TH6} that $\|u_i^{\mu}\|_{\infty}\le \max\{\frac{r_{\max}}{b_{\min}},\frac{s_{\max}}{f_{\min}}\}$, we deduce from the last two inequalities that 
\begin{equation}\label{BH5}
    \lim_{\mu\to \infty}\|v_i^{\mu}\|_{L^2(\Omega)}=0.
\end{equation}
Next, using \eqref{BH5}, we show that \eqref{BH6} holds. To this end, observe that 
$$ 
v_1^{\mu}=\frac{1}{K(x)}\int_{\Omega}\kappa(x,y)v_1^{\mu}(y)dy+\frac{1}{\mu_1K(x)}\left(ru_2^{\mu}-(a+s+bu_1^{\mu}+cu_2^{\mu})u_1^{\mu}\right).
$$
Hence, by the H\"older's inequality,
\begin{align*}
    \|v_1^{\mu}\|_{\infty}\leq\frac{\|\kappa\|_{\infty}|\Omega|^{\frac{1}{2}}\|v_1^{\mu}\|_{L^2(\Omega)}}{K_{\min}}+\frac{\left(\|r\|_{\infty}\|u_2^{\mu}\|_{\infty}+(\|a+s\|_{\infty}+\|b\|_{\infty}\|u_1^{\mu}\|_{\infty}+\|c\|_{\infty}\|u_2^{\mu}\|)\|u_1^{\mu}\|_{\infty}\right)}{\mu_1K_{\min}} 
\end{align*}
Similarly, 
\begin{align*}
    \|v_2^{\mu}\|_{\infty}\leq\frac{\|\kappa\|_{\infty}|\Omega|^{\frac{1}{2}}\|v_2^{\mu}\|_{L^2(\Omega)}}{K_{\min}}+\frac{\left(\|s\|_{\infty}\|u_1^{\mu}\|_{\infty}+(\|e\|_{\infty}+\|f\|_{\infty}\|u_2^{\mu}\|_{\infty}+\|g\|_{\infty}\|u_1^{\mu}\|)\|u_2^{\mu}\|_{\infty}\right)}{\mu_1K_{\min}}. 
\end{align*}
Therefore, since $\|u_i^{\mu}\|_{\infty}\le \max\{\frac{r_{\max}}{b_{\min}},\frac{s_{\max}}{f_{\min}}\}$, \eqref{BH6} follows from the last two inequalities.

\noindent{\bf Step 2.} We show that 
\begin{equation}\label{BH7}
    \lambda_{p}(\mu\circ\mathcal{K}+\mathcal{A})\le \max\{\|u_1^{\mu}\|_{\infty},\|u_2^{\mu}\|_{\infty}\}(\|b\|_{\infty}+\|e\|_{\infty}+\|c\|_{\infty}+\|g\|_{\infty}).
\end{equation}
Indeed, observe that 
$$ 
\begin{cases}
 \max\{\|u_1^{\mu}\|_{\infty},\|u_2^{\mu}\|_{\infty}\}(\|b\|_{\infty}+\|e\|_{\infty}+\|c\|_{\infty}+\|g\|_{\infty})u_1^{\mu}\ge (bu_1^{\mu}+cu_2^{\mu})u_1=\mu_1\mathcal{K}u_1^{\mu}+ru_2^{\mu}-(a+s)u_1^{\mu}\cr 
 \max\{\|u_1^{\mu}\|_{\infty},\|u_2^{\mu}\|_{\infty}\}(\|b\|_{\infty}+\|e\|_{\infty}+\|c\|_{\infty}+\|g\|_{\infty})u_{2}^{\mu}\ge (fu_2^{\mu}+gu_1^{\mu})u_2^{\mu}=\mu_2\mathcal{K}u_2^{\mu}+su_1^{\mu}-eu_2^{\mu}.
\end{cases}
$$
Therefore, \eqref{BH7} follows from Theorem \ref{TH4} since ${\bf u}^{\mu}\in X^{++}\times X^{++}$.

\noindent{\bf Step 3.} In this step, we complete the proof of the theorem. By integrating both equations in \eqref{model}, and using the fact that $\kappa$ is symmetric, we obtain
\begin{equation}\label{BH8}
\begin{cases}
    0=\int_{\Omega}ru_2^{\mu}-\int_{\Omega}(a+s+bu_1^{\mu}+cu_2^{\mu})u_1^{\mu}\cr 
    0=\int_{\Omega}su_1^{\mu}-\int_{\Omega}(e+fu_2^{\mu}+gu_1^{\mu})u_2^{\mu}
    \end{cases}
\end{equation}
  Now, since $\|u^{\mu}\|_{\infty}$ is uniformly bounded in $\mu$, then after passing to a subsequence, we may suppose, there is ${\bf Q}=(Q_1,Q_2)\in \mathbb{R}^+\times\mathbb{R}^+$ such that $(\frac{1}{|\Omega|}\int_{\Omega}u_1^{\mu},\frac{1}{|\Omega|}\int_{\Omega}u_2^{\mu})\to {\bf Q}$ as $\mu\to \infty$. It then follows from Step 1 that ${\bf u}^{\mu}\to {\bf Q}$ as $\mu\to\infty$ uniformly on $\overline{\Omega}$.   Recalling from Theorem \ref{Th2-4} that $\lambda_p(\mu\circ\mathcal{K}+\mathcal{A})\to$ $\tilde{\Lambda}$ as $\mu\to \infty$ and $\tilde{\Lambda} >0$, we deduce from Step 2 that ${\bf Q}\ne {\bf 0}$. Now, letting $\mu\to\infty$ in \eqref{BH8}, we obtain that 
  \begin{equation}
      \begin{cases}
          0=\hat{r}Q_2-(\hat{a}+\hat{s}+\hat{b}Q_1+\hat{c}Q_2)Q_1 \cr 
          0=\hat{s}Q_1-(\hat{e}+\hat{f}Q_2+\hat{g}Q_1)Q_2.
      \end{cases}
  \end{equation}
  This shows that ${\bf Q}$ is a positive solution of \eqref{TH7-eq1}. Since by \cite[Theorem 2]{OSUU2023}, $\hat{V}$ is the unique positive solution of \eqref{TH7-eq1}, then ${\bf Q}=\hat{\bf V}$, and the result follows.

\end{proof}

\begin{proof}[Proof of Theorem \ref{TH8}] Suppose that $(a+s)_{\min}>0$ and $r_{\min}>0$. Let $\mu_2>0$ be fixed and $\lambda_{\mu_2}^{a+s,e}$ be given by Theorem \ref{Th2-3}-{\rm (i)}. Suppose that $\lambda_{\mu_2}^{a+s,e}>0$.  Let  ${\bf u}^{\mu} $ be a positive steady-state solution of \eqref{model} for small values of $\mu_1$. Recall that 
\begin{equation}\label{KH1}
    \begin{cases}
        0=\mu_1\mathcal{K}u_1^{\mu}+ru_2^{\mu}-(a+s+bu_1^{\mu}+cu_2^{\mu})u_1^{\mu} & x\in\Omega,\cr 
        0=\mu_2\mathcal{K}u_2^{\mu}+su_1^{\mu}-(e+fu_2^{\mu}+gu_1^{\mu})u_2^{\mu} & x\in \Omega.
    \end{cases}
\end{equation}
Solving for $u_1^{\mu}$ in the first equation of \eqref{KH1}, we get 
\begin{equation}\label{KH2}
    u_1^{\mu}=\frac{1}{2b}\left( \sqrt{(a+s+cu_2^{\mu})^2+4b(ru_2^{\mu}+\mu_1\mathcal{K}u_1^{\mu})}-(a+s+cu_2^{\mu})\right).
\end{equation}
Hence, introducing the functions

\begin{equation}
    G(x,\tau):=a(x)+s(x)+c(x)\tau
\end{equation}
and 
\begin{align}\label{KH5}
F(x,\tau,\nu):=&\frac{1}{2b(x)}\left( \sqrt{G^2(x,\tau)+4b(x)(r(x)\tau+\nu)}-G(x,\tau)\right)\cr 
=&\frac{2(\tau r(x)+\nu)}{\sqrt{G^2(x,\tau)+4b(x)(\tau r(x)+\nu)}+G(x,\tau)}
\end{align}
for every $x\in\overline{\Omega}$, $\tau\ge 0$ and $\nu\ge 0$, we have that $u_1^{\mu}=F(\cdot,u_2^{\mu},\mu_1\mathcal{K}u_1^{\mu})$. Observe that $F(x,\tau,0)=H(x,\tau)$ for every $\tau\ge 0$ and $x\in\overline{\Omega}$, where $H$ is defined in \eqref{H-eq}. Hence, it follows from \eqref{KH1} and \eqref{KH2} that 
\begin{equation}\label{KH3}
    0=\mu_2\mathcal{K}u_2^{\mu}+sF(x,u_2^{\mu},\mu_1u_1^{\mu}(x))-(e(x)+f(x)u_2^{\mu}(x)+g(x)F(x,u_2^{\mu},\mu_1\mathcal{K}u_1^{\mu}(x)))u_2^{\mu}(x) \quad \forall\ x\in\overline{\Omega}.
\end{equation}
Note that for every $x\in\overline{\Omega}$ and $\tau\ge 0$, the function $[0,\infty)\ni \nu\mapsto F(x,\tau,\nu)$ is nondecresing. Recall also from Step 1 of the proof of Theorem \ref{TH6} that $$
\|u_i^{\mu}\|_{\infty}\le M^*:=\frac{r_{\max}+s_{\max}}{\min\{b_{\min},f_{\min}\}},
$$
hence, it follows from \eqref{KH3} that 
\begin{equation}\label{KH6}
    0\le \mu_2\mathcal{K}u_2^{\mu}+s(x)F(x,u_2^{\mu},\mu_1M^*)-(e(x)+f(x)u_2^{\mu}(x)+g(x)H(x,u_2^{\mu}))u_2^{\mu}(x) \quad \forall\ x\in\overline{\Omega}
\end{equation}
and 
\begin{equation}\label{KH7}
    0\ge \mu_2\mathcal{K}u_2^{\mu}+s(x)H(x,u_2^{\mu})-(e(x)+f(x)u_2^{\mu}(x)+g(x)F(x,u_2^{\mu},\mu_1M^*))u_2^{\mu}(x) \quad \forall\ x\in\overline{\Omega}.
\end{equation}
Form this point, the proof is divided into three parts.

{\bf Step 1}. In this step, we show that that there is $\mu_1^*>0$ such for every $0<\mu<\mu_1^*$, there is a  unique globally stable solution $\underline{u}_{2}^{\mu}$ of 
\begin{equation}\label{KH8}
    0=\mu_2\mathcal{K}\underline{u}_2^{\mu}+\Big(s(x)\frac{H(x,\underline{u}_2^{\mu})}{\underline{u}_2^{\mu}}-e(x)-f(x)\underline{u}_2^{\mu}(x)-g(x)F(x,\underline{u}_2^{\mu},\mu_1M^*)\Big)\underline{u}_2^{\mu}(x) \quad \forall\ x\in\overline{\Omega}
\end{equation}

Now, observe from \eqref{KH5} that, for every $x\in\overline{\Omega}$, the mapping
$$
(0,\infty)\ni\tau\mapsto \frac{H(x,\tau)}{\tau}=\frac{F(x,\tau,0)}{\tau}=\frac{2 r(x)}{\sqrt{G^2(x,\tau)+4b(x)\tau r(x)}+G(x,\tau)\rd)\bk}
$$
is strictly decreasing and
$$
\lim_{\tau\to 0}\frac{H(x,\tau)}{\tau}=\frac{r(x)}{(a(x)+s(x))}\quad \text{for}\ x \ \text{uniformly on }\ \Omega.
$$
Note that $[0,\infty)\ni\tau\mapsto F(x,\tau,\nu)$ is of class $C^1$ and 
\begin{align*}
    &f(x)+\partial_{\tau}F(x,\tau,\nu)\cr 
    =&f(x)+\frac{\left(\Big(2b(x)r(x)+c(x)G(x,\tau)\Big)-c(x)\sqrt{G^2(x,\tau)+4b(x)(\tau r(x)+\nu)}\right)}{2b(x)\sqrt{G^2(x,\tau)+4b(x)(\tau r(x)+\nu)}} = \mathbb{I}_1(x)+\mathbb{I}_2(x)
\end{align*}
where 

\begin{align}
    \mathbb{I}_1(x,\tau,\nu):=& \frac{\left(\Big(2b(x)r(x)+c(x)G(x,\tau)\Big)-c(x)\sqrt{G^2(x,\tau)+4b(x)\tau r(x)}\right)}{2b(x)\sqrt{G^2(x,\tau)+4b(x)(\tau r(x)+\nu)}}
\end{align} 
and 
\begin{align*}
     & \mathbb{I}_2(x,\tau,\nu)  := f(x)+\frac{c(x)\left(\sqrt{G^2(x,\tau)+4b(x)\tau r(x)}-\sqrt{G^2(x,\tau)+4b(x)(\tau r(x)+\nu)}\right)}{2b(x)\sqrt{G^2(x,\tau)+4b(x)(\tau r(x)+\nu)}}=\cr 
    & f(x)-\frac{4b(x)c(x)\nu}{2b(x)\sqrt{G^2(x,\tau)+4b(x)(\tau r(x)+\nu)}\left(\sqrt{G^2(x,\tau)+4b(x)\tau r(x)}+\sqrt{G^2(x,\tau)+4b(x)(\tau r(x)+\nu)}\right)}
\end{align*}
An easy computation shows that $\mathbb{I}_1(x,\tau,\nu)\ge 0$ for every $x\in\overline{\Omega}$, $\tau\ge 0$, and $\nu\ge 0$, and 
$$ 
\lim_{\nu\to 0}\max_{x\in\overline{\Omega},\tau\in[0,M^*]}\left|\mathbb{I}_2(x,\tau,\nu)-f(x)\right|=0.
$$
Therefore, since $f_{\min}>0$, there is $\tilde{\mu}_1^{*}>0$ such that for every $0<\mu_1\le \tilde{\mu}_1^*$ and $x\in\overline{\Omega}$, the mapping 
$$(0,M^*]\ni\tau \mapsto \tilde{F}(x,\tau,\mu_1):=s(x)\frac{H(x,\tau)}{\tau}-e(x)-f(x)\tau -g(x)F(x,\tau,\mu_1M^*)$$
is strictly decreasing. Observe that $0\le \frac{H(x,\tau)}{\tau}\le \frac{r(x)s(x)}{(a(x)+s(x))}$ for all $x\in\overline{\Omega}$ and $\tau>0$, hence 
$$
\tilde{F}(x,M^*,\mu_1)\le \frac{r(x)s(x)}{a(x)+s(x)}-f(x)M^*\le r(x)-f(x)M^*<0\quad \forall\ x\in\overline{\Omega}, \ 0<\mu_1<\tilde{\mu}^*_1.
$$
Finally, it is easy to see that 
$$ 
\lim_{\mu_1\to0,\tau\to0}\max_{x\in\overline{\Omega}}\left|\tilde{F}(x,\tau,\mu_1)-\Big(\frac{r(x)s(x)}{a(x)+s(x)}-e(x)\Big)\right|=0.
$$
This shows that 
$$
\lim_{\mu_1\to 0}\lambda_p(\mu_2\mathcal{K}+\tilde{F}(x,0,\mu_1))=\lambda_p(\mu_2\mathcal{K}+(\frac{rs}{a+s}-e)\mathcal{I}).
$$
As a result, since $\lambda_p(\mu_2\mathcal{K}+(\frac{rs}{a+s}-e)\mathcal{I}) $ and $\lambda_{\mu_2}^{a+s,e}$ have the same sign by Proposition \ref{Main-prop1}, and $\lambda_{\mu_2}^{a+s,e}>0$, there is $0<\mu_1^*<\tilde{\mu}_1^*$ such that $\lambda_p(\mu_2\mathcal{K}+\tilde{F}(x,0,\mu_1))>0$ for every $0<\mu_1<\mu_1^* $. Hence, it follows from standard arguments on the nonlocal-dispersal equations of Fisher-KPP type that for every $0<\mu_1<\mu_1^*$, there is a unique stable solution  $\underline{u}_2^{\mu}\in X^{++}$ which solves \eqref{KH8}. Moreover, $\|\underline{u}_2^{\mu}\|_{\infty}\le M^*$ for every $0<\mu_1<\mu_1^*$. Furthermore,  since $\mu_1\mapsto F(x,\tau,\mu_1M^*)$ is strictly decreasing on $(0,\mu_1^*)$ for every $x\in\overline{\Omega}$ and $\tau\in[0,M^*]$, then by the comparison principle for cooperative nonlocal equations, we have that $\underline{u}_2^{\mu} $ is decreasing in $\mu_1$. And there is $\underline{u}_2^*\in X^{++}$ such that $\underline{u}_2^{\mu}\to \underline{u}_2^*$. Note that $\underline{u}_2^*$ also solves \eqref{TH8-eq}. Since \eqref{TH8-eq} has a unique solution, then $\underline{u}_2^*=w^*$.

{\bf Step 2.} In this step, we show that that there is $0<\hat{\mu}_1^*<\mu_1^*$ such for every $0<\mu<\hat{\mu}_1^*$, there is  a  unique stable solution $\overline{u}_{2}^{\mu}\ge \underline{u}_2^{\mu}$ of 
\begin{equation}\label{KH9}
0=\mu_2\mathcal{K}\overline{u}_2^{\mu}+s(x)F(x,\overline{u}_2^{\mu},\mu_1M^*)-(e(x)+f(x)\overline{u}_2^{\mu}(x)+g(x)H(x,\overline{u}_2^{\mu}))\overline{u}_2^{\mu}(x) \quad \forall\ x\in\overline{\Omega}.
\end{equation}
The proof follows by a proper modification of the arguments of Step 1. Note also here that $\overline{u}_{2}^{\mu}$ is decreasing in $\mu_1$ and converges uniformly to $w^*$.

{\bf Step 3.} In this step, we complete the proof of the theorem. Indeed, it follows from \eqref{KH6} and \eqref{KH7} that $\underline{u}_{2}^{\mu}<u_2^{\mu}<\overline{u}_2^{\mu}$ for every $0<\mu_1<\hat{\mu}_1^*$, which implies that $\|u_2^{\mu}-w^*\|_{\infty}\to 0$ as $\mu_1\to 0$. This in turn together with \eqref{KH2} implies that $u_1^{\mu}\to H(\cdot,w^*(\cdot)) $ as $\mu_1\to 0$ uniformly on $\Omega$.

\end{proof}

We complete this section with a proof of Theorem \ref{TH9}. 

 \begin{proof}[Proof of Theorem \ref{TH9}] Fix $\mu_2>0$ and suppose that $e\in X^+\setminus\{0\}$. Suppose also that $\lambda_{\mu_2}^{\infty}>0$, where $\lambda_{\mu_2}^{\infty}$ is  given by Theorem \ref{Th2-4}-{\rm(i)}. Let ${\bf u}^{\mu}$, $\mu=(\mu_1,\mu_2)$, be a collection of steady-state solutions of \eqref{model} for sufficiently large values of $\mu_1$.  Hence,  as in Step 1 of proof of Theorem \ref{TH7}, we have 
 \begin{equation*}
     \lim_{\mu_1\to\infty}\left\|u_1^{\mu}-\frac{1}{|\Omega|}\int_{\Omega}u_1^{\mu}\right\|_{\infty}=0.
 \end{equation*}
 Hence, there is a nonnegative number $l^*$ and a sequence $\mu_{1,n}\to \infty$ with $\mu^n=(\mu_{1,n},\mu_2)$ such that 
 \begin{equation}\label{BH7-2}
     \lim_{n\to\infty}\|u_1^{\mu^n}-l^*\|_{\infty}=0.
 \end{equation}
 Now, since $u_2^{\mu^n}$ satisfies 
 \begin{equation}\label{BH7-3}
     0=\mu_2\mathcal{K}(u_2^{\mu^n})+su_1^{\mu^n}-(e+fu_2^{\mu^n}+gu_1^{\mu^n})u_2^{\mu^n} 
 \end{equation}
 and $\lambda_p(\mu_2\mathcal{K}-e\mathcal{I})<0$  since $e\in X^+\setminus\{0\}$, and $\Psi_{\mu_2,e}(\cdot,0,\cdot)$ is strongly positive, then 
 \begin{equation} \label{BH7-4}
0\le  u_2^{\mu^n}=\Psi_{\mu_2,e}\big(\cdot, 0, su_1^{\mu^n}-(fu_2^{\mu^n}+gu_1^{\mu^n})u_2^{\mu^n}\big) \le \Psi_{\mu_2,e}(su_1^{\mu^n})\le \|\Psi_{\mu_2,e}(\cdot,0,\cdot)\| s_{\max} \|u_{1}^{\mu^n}\|_{\infty}\quad \forall\ n\ge 1.
 \end{equation}
 It then follows from \eqref{BH7} and \eqref{BH7-2} that 
 \begin{align*}
0<\lambda_{\mu_2}^{\infty}
=\lim_{n\to\infty}\lambda_p(\mu^n\circ\mathcal{K}+\mathcal{A})\le l^*\max\{1,\|\Psi_{\mu_2,e}(\cdot,0,\cdot)\| s_{\max} \}(\|b\|_{\infty}+\|e\|_{\infty}+\|c\|_{\infty}+\|g\|_{\infty}),
\end{align*}
which implies that $l^*>0$. As a result, defining the function 
$$
\tilde{F}^*(x,\tau)=l^*s(x)-(e(x)+f(x)\tau+l^*g(x))\tau\quad x\in\overline{\Omega},\ \tau\ge 0,
$$
we have that $\tilde{F}^*(\cdot,0)=l^*s\in X^{+}\setminus\{0\}$. Hence, $\lambda_p(\mu_2\mathcal{K}+\tilde{F}^*(\cdot,0))>0$. Moreover, for every $x\in\overline{\Omega}$, the mapping $\tau \mapsto \tilde{F}^*(x,\tau)$ is strictly decreasing and  $ \tilde{F}^*(x,\tau) \bk <0$  for $\tau\ge \sqrt{\frac{2l^*s_{\max}}{f_{\min}}}$. It  follows from standard arguments that  the nonlocal equation 
\begin{equation}
    0=\mu_2\mathcal{K}\tilde{w}+\tilde{F}^*(x,\tilde{w})
\end{equation}
has a unique positive solution $\tilde{w}^*\in X^{++}$. Observing from \eqref{BH7-3} that 
$$
0=\mu_2\mathcal{K}(u_2^{\mu^n})+\tilde{F}^*(x,u_2^{\mu^n})+(s-u_2^{\mu^n}g)(u_1^{\mu^n}-l^*),
$$
and by \eqref{BH7-2} and \eqref{BH7-4}
$$ 
\lim_{n\to\infty}\|(s-u_2^{\mu^n}g)(u_1^{\mu^n}-l^*)\|_{\infty}=0,
$$
we can employ a perturbation argument to conclude that $u_2^{\mu^n}\to \tilde{w}^*$ as $n\to\infty$, uniformly in $\Omega$. Finally, integrating over $\Omega$ the  equation
$$ 
0=\mu_{1,n}\mathcal{K}(u_1^{\mu^n})+ru_2^{\mu^n}-(a+s+bu_1^{\mu^n}+cu_2^{\mu^n})u_1^{\mu^n}
$$
 and letting $n\to\infty$ in the resulting equation yields
$$
0=\int_{\Omega}r\tilde{w}^*-l^*\int_{\Omega}((a+s)+bl^*+c\tilde{w}^*).
$$
Therefore, $(l^*,\tilde{w}^*)$ satisfies \eqref{TH9-eq} and ${\bf u}^{\mu^n}\to(l^*,\tilde{w}^*) $ as $n\to\infty$, uniformly in $\Omega$.
   
 \end{proof}
\subsection*{Acknowledgement.} We sincerely thank the anonymous reviewer for the helpful comments and suggestions.
 

\end{document}